\DeclareMathAlphabet{\dc}{U}{dutchcal}{m}{n}
\DeclareMathAlphabet{\mathcal}{OMS}{cmsy}{m}{n}
\renewcommand{\Re}{\mathrm{Re}}
\renewcommand{\le}{\leq}
\renewcommand{\pmod}[1]{\allowbreak\mkern7mu({\operator@font mod}\,\,#1)}
\newcommand{\mc}{\mathcal}
\newcommand{\ba}{\mathbf{a}}
\newcommand{\ms}{\mathscr}
\newcommand{\eps}{\varepsilon}
\newcommand{\Li}{\mathrm{Li}}
\newcommand{\mf}{\mathfrak}
\newcommand{\rad}{\mathrm{rad}}
\newcommand{\E}{\mathbb{E}}   
\newcommand{\PR}{\mathbb{P}}  
\newcommand{\be}{\begin{equation}}
\newcommand{\ee}{\end{equation}}
\newcommand{\PP}{\mathcal{P}}
\newcommand{\QQ}{\mathcal{Q}}
\newcommand{\cS}{\mathcal{S}}
\newcommand{\pfrac}[2]{\left(\frac{#1}{#2}\right)}  
\newcommand{\disc}{\mathrm{disc}}
\numberwithin{equation}{section}
\newcommand{\Q}{\mathbb{Q}}
\newcommand{\R}{\mathbb{R}}
\newcommand{\N}{\mathbb{N}}
\newcommand{\Z}{\mathbb{Z}}
\newcommand{\ideal}{\vartriangleleft}
\newtheorem{theorem}{Theorem}[section]
\newtheorem{lemma}[theorem]{Lemma}
\newtheorem{definition}[theorem]{Definition}
\newtheorem{corollary}[theorem]{Corollary}
\newtheorem{proposition}[theorem]{Proposition}
\title{Prime-free discs in imaginary quadratic fields}
\author{Tanmay Khale}
\address{Department of Mathematics, University of Illinois, Urbana, IL 61801, USA}
\email{tnkhale2@illinois.edu}
\begin{document}

\begin{abstract}
Suppose $K$ is an imaginary quadratic field, and let $N_K$ denote the field norm in $\mc{O}_K$. Let  $B(\dc{x}_0,r) = \{\dc{x} \in \mc{O}_K: |N_K(\dc{x}-\dc{x}_0)| < r\}$. Let 
$G_K(X) = \max \{r > 0: \text{there exists } \dc{x}_0 \in \mc{O}_K \text{ such that } |N_K(\dc{x}_0)| \leq X \text{ and } B(\dc{x}_0,r) \text{ contains no primes} \}$. We show that $ G_{K}(X) \gg_K (\log X) \frac{\log_2(X) \log_4(X)}{\log_3	(X)}	 $.
\end{abstract}

\maketitle
{\centering\footnotesize Dedicated to the memory of Zachary H. Polansky.\par}

\section{Introduction}\label{section-introduction}
Suppose $K$ is a number field, and let $N_K$ denote the field norm in $\mc{O}_K$. Let  $B(\dc{x}_0,r) = \{\dc{x} \in \mc{O}_K: |N_K(\dc{x}-\dc{x}_0)| < r\}$. Let $G_K(X)$ denote the size of the largest ``hole" in primes of norm at most $X$. That is,
\begin{align*}
G_K(X) &= \max \{r > 0: \text{there exists } \dc{x}_0 \in \mc{O}_K \text{ such that } |N_K(\dc{x}_0)| \leq X \\
&\qquad \text{ and } B(\dc{x}_0,r) \text{ contains no primes} \}	.
\end{align*}
By the prime ideal theorem (Theorem \ref{landauprimeideal} below), $G_K(X)$ is at least $(1+o_K(1)) \log(X)$ (where $o_K(1)$ denotes a function depending on $K$ which tends to zero as $X \to \infty$). For $K=\Q$, Westzynthius, Erd\H{o}s and Rankin successively improved the lower bound above, showing for a \textit{fixed} constant $c > 0$, and writing $\log_k(x)$ to denote the $k$-fold iterated logarithm, that
\begin{equation*}
G_{\Q}(X) \geq (c + o(1)) (\log X)  \frac{\log_2(X) \log_4(X)}{(\log_3(X))^2} .
\end{equation*}
The above stood as the best-known result for 76 years, until in 2014 two papers \cites{4author, 1author} independently proved that the constant $c$ above could be taken to be \textit{arbitrarily} large. In a subsequent collaboration \cite{fgkmt}, the authors of the two papers showed that
\begin{equation}\label{fgkmt-result}
G_{\Q}(X) \gg (\log X) \frac{\log_2(X) \log_4(X)}{\log_3	(X)}	.
\end{equation}
In the more general case where $K$ is any imaginary quadratic field, the trivial lower bound $(1+o_K(1)) \log(X)$ has not previously been improved. The objective of this paper is to prove a lower bound generalizing \eqref{fgkmt-result} to any imaginary quadratic field $K$. Our main result is the following:
\begin{theorem}\label{mainthm}
Let $K$ be an imaginary quadratic field. Then, we have
\begin{align}\label{size}
G_{K}(X) \gg_K (\log X) \frac{\log_2(X) \log_4(X)}{\log_3	(X)}	.
\end{align}
\end{theorem}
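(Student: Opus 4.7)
My plan is to generalise the Erd\H{o}s--Rankin sieve construction, in the refined form of Ford, Green, Konyagin, Maynard and Tao (FGKMT, \cite{fgkmt}), to the ring $\mc{O}_K$. The aim is, for a parameter $y$ slightly smaller than $r$, to construct for each prime ideal $\mf{p} \ideal \mc{O}_K$ with $N_K(\mf{p}) \leq y$ a residue class $\ba_{\mf{p}} \pmod{\mf{p}}$ so that every $\dc{x} \in B(\dc{x}_0, r)$ lies in $\ba_{\mf{p}} \pmod{\mf{p}}$ for some $\mf{p}$. Choosing the base point so that $|N_K(\dc{x})| \gg X$ uniformly on the disc, the condition $N_K(\mf{p}) \leq y \ll X$ then forces $(\dc{x})$ to be a proper multiple of a prime ideal, and hence $\dc{x}$ is not prime.

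I would first set up the geometry: identifying $\mc{O}_K$ with a lattice $\Lambda \subset \R^2$ under the archimedean embeddings, the set $B(\dc{x}_0, r)$ is a (translated) ellipse of area $\asymp r$, containing $(c_K + o(1)) r$ lattice points for a constant $c_K > 0$ depending only on $\disc K$. I then fix $\dc{x}_0$ with $|N_K(\dc{x}_0)|$ of order $X$, which, together with $r = o(X)$, ensures $|N_K(\dc{x})| \asymp X$ for all $\dc{x} \in B(\dc{x}_0, r)$. The main analytic inputs are Theorem \ref{landauprimeideal} (Landau's prime ideal theorem) and the corresponding Mertens-type estimate $\sum_{N_K(\mf{p}) \leq t} N_K(\mf{p})^{-1} = \log_2 t + O_K(1)$, both of the same form as their rational counterparts. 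Setting $y = c_1 r / \log r$, I partition the prime ideals of norm $\leq y$ into small, medium, and large ranges.

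For small prime ideals (norm up to a threshold $z$), I take $\ba_{\mf{p}} = 0$, sifting out every $\dc{x}$ divisible by a small $\mf{p}$; Mertens in $\mc{O}_K$ gives a set $S$ of survivors with $|S| \ll_K r / \log z$. For large prime ideals (norm close to $y$), I pick $\ba_{\mf{p}}$ uniformly at random, and a second-moment calculation analogous to \cite{fgkmt} shows that only a small subset $S' \subset S$ remains uncovered. The decisive step is the treatment of medium prime ideals via the abstract hypergraph covering theorem of FGKMT: form a hypergraph whose vertices are the remaining survivors in $S'$ and whose edges correspond to valid residue-class assignments for medium prime ideals, then apply the covering theorem to match each survivor in $S'$ to a distinct medium prime ideal.

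The principal obstacle will be verifying the quantitative hypotheses of the hypergraph covering theorem in the $\mc{O}_K$ setting. These demand sharp control on the distribution of survivors in short residue classes modulo medium prime ideals; in the rational case this follows from Mertens-type identities combined with counting lemmas, and I expect these to transfer to $\mc{O}_K$ by means of Landau's prime ideal theorem, its variant for prime ideals in residue classes (via Hecke $L$-functions), and elementary lattice-point counts inside the ellipse $B(\dc{x}_0, r)$. A secondary technicality is that inert rational primes contribute prime ideals of norm $p^2$ rather than $p$, giving weaker sieving efficiency; since split and ramified primes alone have positive density by Chebotarev, this can be absorbed into $K$-dependent constants by restricting the sieve to split and ramified prime ideals.
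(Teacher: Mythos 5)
Your general framework (reduce via the Chinese Remainder Theorem to covering a ball by residue classes modulo prime ideals, sift small primes at the zero class, discard smooth elements, then use randomness together with the FGKMT hypergraph covering theorem) is indeed the skeleton of the argument, but the way you allocate the prime ranges, and in particular your final step, contains a genuine gap. In the actual argument the uniformly random residue classes are assigned to the \emph{medium} primes $\mathcal{S}$ (norms between $\log^{20}x$ and $z_0=x^{\log_3 x/(5\log_2 x)}$), while the hypergraph covering theorem (Corollary \ref{packing-quant-cor}) is applied to the \emph{largest} primes $\mathcal{P}$ (norm $\asymp x$), and, crucially, each such prime must cover an entire cluster of surviving primes $\dc{q}\in\mathcal{Q}$, not a single one. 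This is forced by counting: after the zero classes and the random classes, the survivors are essentially $\mathcal{Q}\cap S(\vec{\mathbf{a}})$, of size $\asymp \sigma|\mathcal{Q}| \asymp x\log_2 x/\log x$, whereas only $\asymp x/\log x$ primes of norm $\asymp x$ are available. A one-to-one assignment of survivors to distinct primes, which is what you describe (``match each survivor in $S'$ to a distinct medium prime ideal''), therefore cannot cover them; it is precisely the classical step that caps the method at Rankin's bound with $(\log_3 X)^2$ in the denominator. To obtain \eqref{size} each $\dc{p}\in\mathcal{P}$ must receive a residue class containing on the order of $\log_2 x$ primes of $\mathcal{Q}$ simultaneously, and the role of the covering theorem is to choose these clusters coherently so that all but $O(x/(\log x\log_2 x))$ of $\mathcal{Q}\cap S(\vec{\mathbf{a}})$ are covered; only the small leftover set is then matched one-to-one, trivially, to the primes with norm in $(x/4,x/2]$.

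This cluster step is also where the real work lies, and your plan omits it: producing, for each $\dc{p}\in\mathcal{P}$, a random $\mathbf{n}_{\dc{p}}$ with $\mathbf{n}_{\dc{p}}+\dc{h}_i\dc{p}\in\mathcal{Q}$ for $\asymp\log_2 x$ indices $i$, with first and second moments uniform in $\dc{p}$ and $\dc{q}$, requires a Maynard--Tao multidimensional sieve over $\mathcal{O}_K$ (Propositions \ref{prop7.8}--\ref{prop7.14} and Theorem \ref{thm8.6}) together with a Bombieri--Vinogradov theorem for ray classes with a Landau--Page exceptional modulus removed (Lemma \ref{page-bv}); Landau's prime ideal theorem, Mertens-type identities and lattice-point counts alone do not give equidistribution on average over moduli as large as a fixed power of $x$, which is what the uniform covering hypothesis \eqref{pje-size-bite-cor} ultimately rests on. Two smaller points: the sharp control needed is on primes of $\mathcal{Q}$ in residue classes modulo the \emph{large} primes $\mathcal{P}$, not ``survivors in short residue classes modulo medium prime ideals''; and discarding inert primes is both unnecessary (they contribute $O(1)$ to $\sum_{N_K(\mf{p})\le t}N_K(\mf{p})^{-1}$, so there is no efficiency loss in keeping them) and harmful at the zero-residue stage, since composite elements divisible only by inert primes in the sifted ranges would then never be covered by any part of the construction.
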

\subsection{Organization} In Section \ref{section-rankin}, we use the Chinese Remainder theorem alongside estimates for smooth algebraic integers to reduce Theorem \ref{mainthm} to Theorem \ref{sieved}. In Section \ref{section-page-bv} we utilize the Landau--Page theorem for number fields to obtain a version of the Bombieri--Vinogradov theorem for number fields with a strong error term, for use in Section \ref{section-sieve-weights}. The bulk of this paper, in Section \ref{section-sieve-weights}, is devoted to the proof of Theorem \ref{prop7.14}, a number field variant of the uniform estimates for prime $k$-tuples in \cite{denseclusters}. We use this to deduce Theorem \ref{thm8.6}, which gives the existence of a sieve weight analogous to sieve weight defined in \cite{fgkmt}*{Section~7}. In Section \ref{section-probability-weights}, we define the probability weight used to prove Theorem \ref{sieved}, and using Theorem \ref{thm8.6} we deduce Corollary \ref{immediate}. Finally, combining Corollary \ref{immediate} with Theorem \ref{packing-quant-cor} (which is a corollary of the hypergraph covering theorem in \cite{fgkmt}*{Theorem 3}), we deduce Theorem \ref{sieved}.

\subsection{Acknowledgements}
The author thanks Kevin Ford, Jesse Thorner, and Gergely Harcos for many helpful comments and corrections.

\section{Notational conventions}\label{section-notation}
 Throughout this paper we adopt the following typographical convention:
  \begin{enumerate}
    \item \emph{Ideals} of the ring of integers $\mathcal O_K$ are denoted by
          fraktur letters, e.g.,\ $\mathfrak a,\mathfrak p\subset\mathcal O_K$.
    \item \emph{Algebraic integers} (elements of $\mathcal O_K$) are written in
          Dutch calligraphic letters, e.g.\ $\dc a,\dc b\in\mathcal
          O_K$.
    \item \emph{Rational integers} (elements of $\Z$) are denoted by the default TeX math font, e.g., $n, p, q \in \Z$. 
  \end{enumerate}
The implied constants in this paper may depend on the imaginary quadratic field $K$ in an unspecified manner. We write $f = O_\leq(g)$ if $|f| \leq g$. We write $\mf{a} \ideal \mc{O}_K$ to mean that $\mf{a}$ is an integral (i.e., not fractional) ideal of $\mc{O}_K$.
Define
\begin{equation*}
\pi_G(x) = \#\{\mathfrak{p} \ideal \mc{O}_K: \mathfrak{p} \text{ prime, } N_{K}(\mathfrak{p}) \leq x\}.
\end{equation*}
For an ideal $\mf{a} \ideal \mc{O}_K$, let $N_K(\mf{a})$ denote the ideal norm $N_{K/\Q}(\mf{a})$ of the ideal $\mf{a}$. As usual, we define $N_K$ for algebraic integers $a \in \mc{O}_K$ by $N_K(a) = N_K((a))$. For $\mf{a} \ideal \mc{O}_K$, define
\[
\Lambda(\mf{a})
=
\begin{cases}
\log(N_K(\mf{p}))	&  \mf{a} = \mf{p}^k \\
0 & \text{otherwise.}
\end{cases}
\]
We define the M\"obius function $\mu$ on prime power ideals by $\mu(\mf{p}) = -1$ and $\mu(\mf{p}^k) = 0$ for $k \geq 2$. We extend $\mu$ to all prime ideals $\mf{a} \ideal \mc{O}_K$ multiplicatively.

For $n \in \Z$, define
\[
\rad(n) = \prod_{p \mid n} p.
\]
For $n \in \Z$, we write $P^+(n)$ and $P^-(n)$ to denote the largest and smallest prime factors of $n$ respectively, with the conventions that $P^{+} (1) = 1$ and $P^{-}(1) = \infty$.

Whenever we use the variable $\mf{q} \ideal \mc{O}_K$, we assume that $\mf{q}$ is relatively prime to the difference between any two units in $\mc{O}_K$, which excludes only $O(1)$ choices of $\mf{q}$. We also assume that the units do not represent all reduced residue classes modulo $\mf{q}$.

Finally, we write $\sum'$ to denote a sum over ideals $\mf{q}$ composed of non-ramifying prime ideals. 
\section{Preliminaries}\label{section-preliminaries}
In this section, we record several standard results for later use. First, we require Landau's prime ideal theorem, in the following form (from \cite{mv}*{Theorem 8.9}):
\begin{theorem}[Landau]\label{landauprimeideal}
Let $K$ be an algebraic number field of finite degree over $\mathbb{Q}$, and let $\mathcal{O}_K$ denote the ring of algebraic integers in $K$.  Then for $x \ge 2$, the number of prime ideals $\mathfrak{p}$ of $\mathcal{O}_K$ with
\[
N_K(\mathfrak{p}) \le x
\]
is
\[
\#\{\mathfrak{p}\ideal\mathcal{O}_K : N_K(\mathfrak{p})\le x\}
\;=\;\mathrm{Li}(x)\;+\;O_K\bigl(x\exp(-c\sqrt{\log x})\bigr),
\]
where $c>0$ is a constant depending on $K$.
\end{theorem}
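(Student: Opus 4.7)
The plan is to follow the classical de la Vall\'ee Poussin proof of the prime number theorem, with the Dedekind zeta function $\zeta_K(s) = \sum_{\mf{a} \ideal \mc{O}_K} N_K(\mf{a})^{-s}$ in place of the Riemann zeta function. The standard input I would invoke is that $\zeta_K$ converges absolutely for $\Re(s) > 1$, admits an Euler product $\zeta_K(s) = \prod_{\mf{p}} (1 - N_K(\mf{p})^{-s})^{-1}$, extends meromorphically to $\C$ with a single simple pole at $s = 1$, and satisfies a functional equation relating $\zeta_K(s)$ to $\zeta_K(1-s)$ via a completed zeta function with gamma factors depending on the real and complex embeddings of $K$. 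After this setup, the problem reduces to estimating $\psi_K(x) = \sum_{N_K(\mf{a}) \leq x} \Lambda(\mf{a})$, since the trivial bound $\sum_{k \geq 2} \sum_{N_K(\mf{p})^k \leq x} \log N_K(\mf{p}) \ll_K \sqrt{x} \log x$ together with Abel summation converts an asymptotic for $\psi_K$ into one for $\pi_G$.

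The main obstacle, just as in the rational case, will be establishing a classical zero-free region: there exists $c_K > 0$ such that $\zeta_K(\sigma + it)$ does not vanish for $\sigma \geq 1 - c_K / \log(|t|+2)$. My route would be the standard de la Vall\'ee Poussin trick: the Dirichlet series $-\zeta_K'(s)/\zeta_K(s) = \sum_{\mf{a}} \Lambda(\mf{a}) N_K(\mf{a})^{-s}$ has nonnegative coefficients, so the trigonometric inequality $3 + 4\cos\theta + \cos 2\theta \geq 0$ yields
\[
-3\,\Re \frac{\zeta_K'}{\zeta_K}(\sigma) - 4\,\Re \frac{\zeta_K'}{\zeta_K}(\sigma + it) - \Re \frac{\zeta_K'}{\zeta_K}(\sigma + 2it) \geq 0
\]
for $\sigma > 1$. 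The pole at $s=1$ controls the first term, while convexity bounds on $|\zeta_K|$ in the critical strip (obtained from the functional equation via Phragm\'en--Lindel\"of) control the remaining two; a standard rearrangement then rules out zeros in the claimed region. The only genuinely $K$-dependent feature is that the degree $[K:\Q]$ enters the convexity exponent and hence the constant $c_K$.

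With the zero-free region in hand, I would apply a truncated Perron formula
\[
\psi_K(x) = \frac{1}{2\pi i} \int_{c - iT}^{c + iT} \left(-\frac{\zeta_K'(s)}{\zeta_K(s)}\right) \frac{x^s}{s}\, ds + O\!\left(\frac{x (\log x)^2}{T}\right)
\]
with $c = 1 + 1/\log x$, shift the contour leftward to $\Re(s) = 1 - c_K/(2 \log T)$, collect the residue at $s = 1$ (which contributes the main term $x$), and bound the horizontal and vertical pieces using Borel--Carath\'eodory estimates for $\zeta_K'/\zeta_K$ on the shifted contour. Optimizing $T = \exp(\sqrt{\log x})$ produces $\psi_K(x) = x + O_K(x \exp(-c'\sqrt{\log x}))$, and the partial-summation step mentioned above then upgrades this to $\pi_G(x) = \mathrm{Li}(x) + O_K(x \exp(-c''\sqrt{\log x}))$, as claimed.
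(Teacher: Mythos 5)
Your outline is correct, but note that the paper does not prove this theorem at all: it simply quotes it from Montgomery--Vaughan (the reference cited as \cite{mv}*{Theorem 8.9}), and the proof given there is exactly the classical route you describe --- zero-free region for the Dedekind zeta function via the $3+4\cos\theta+\cos2\theta$ inequality, truncated Perron formula, contour shift, and partial summation. So your proposal matches the approach underlying the paper's cited source, and no further comment is needed.
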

Second, we require the following consequences of the Chebotarev density theorem in \cite{lo}*{Theorem 1.3}:
\begin{theorem}\label{cheb}
Let $K$ be an imaginary quadratic field. For $p \in \Z$, we say $p$ splits in $\mc{O}_K$ if $(p) = \mf{p}_1 \mf{p}_2$ for prime ideals $\mf{p}_1, \mf{p}_2 \ideal \mc{O}_K$, and we say that $p$ is inert if $(p) \ideal \mc{O}_K$ is prime. Then, for constants $C_1, C_2, c$, depending on $K$, we have the following:
\begin{align*}
&\sum_{\substack{p \leq x \\ p~\mathrm{ splits}}} \frac{\log p}{p} = \frac{1}{2}\log(x) + C_1 +  O_K\left(\exp(-c\sqrt{\log x})\right), \\
 &\sum_{\substack{p \leq x \\ p~\mathrm{ inert}}} \frac{\log p}{p} = \frac{1}{2}\log(x) + C_2 + O_K\left(\exp(-c\sqrt{\log x})\right).
\end{align*}
\end{theorem}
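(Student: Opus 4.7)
The approach is to express the splitting condition via the Kronecker symbol $\chi_K$ associated with the discriminant of $K$, and then apply the prime number theorem to the factorization $\zeta_K(s) = \zeta(s) L(s,\chi_K)$. Specifically, for $K$ an imaginary quadratic field, an unramified rational prime $p$ splits in $\mathcal{O}_K$ if and only if $\chi_K(p) = 1$, and is inert if and only if $\chi_K(p) = -1$; only $O_K(1)$ primes ramify. Consequently,
\begin{equation*}
\sum_{\substack{p \le x \\ p \text{ splits}}} \frac{\log p}{p} = \frac{1}{2}\sum_{p \le x} \frac{\log p}{p} + \frac{1}{2}\sum_{p \le x} \frac{\chi_K(p)\log p}{p} + O_K(1),
\end{equation*}
with an analogous identity holding for the inert sum on replacing $\chi_K$ by $-\chi_K$.

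I would evaluate the two right-hand sums separately. The unweighted sum is handled by Mertens' theorem refined by the classical prime number theorem, giving $\sum_{p \le x}(\log p)/p = \log x + M + O(\exp(-c\sqrt{\log x}))$. The $\chi_K$-twisted sum is handled by the prime number theorem for $L(s,\chi_K)$: this is precisely the content of the effective Chebotarev density theorem from \cite{lo}*{Theorem~1.3} applied to $K/\Q$ (viewing splits and inerts as the two conjugacy classes of $\mathrm{Gal}(K/\Q) \cong \Z/2\Z$), and it yields $\sum_{p \le x} \chi_K(p) \log p = O_K(x\exp(-c\sqrt{\log x}))$. Partial summation upgrades this to $\sum_{p \le x}\chi_K(p)(\log p)/p = C + O_K(\exp(-c\sqrt{\log x}))$ for some constant $C = C(K)$. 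Combining the two evaluations gives the theorem with $C_1 = (M+C)/2$ and $C_2 = (M-C)/2$.

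The only real subtlety is the potential Siegel zero of the quadratic $L$-function $L(s,\chi_K)$, which would ordinarily obstruct a uniform $\exp(-c\sqrt{\log x})$ bound. However, since both the implied constant and the constant $c$ are permitted to depend on $K$, the Landau--Page analysis packaged inside Lagarias--Odlyzko's theorem absorbs any exceptional zero into the $K$-dependence and no further work is needed. In short, the whole statement is a character-theoretic repackaging of a single application of the effective Chebotarev theorem plus Mertens' theorem.
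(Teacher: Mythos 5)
Your proof is correct and takes essentially the same route as the paper, which simply records these estimates as consequences of the effective Chebotarev density theorem of \cite{lo}*{Theorem 1.3}; your Kronecker-character decomposition, Mertens' theorem with prime number theorem error, and partial summation are just the natural way to spell that citation out, and your observation that fixing $K$ lets the constants absorb any exceptional zero is exactly right. One minor point: in your first display the ramified primes should be accounted for by the explicit, eventually constant quantity $-\tfrac12\sum_{p \mid \disc(K),\, p\le x}(\log p)/p$ rather than an $O_K(1)$ term (a genuine $O_K(1)$ error would swallow the constant $C_1$), but this is harmless since that correction is constant once $x \ge |\disc(K)|$ and can be absorbed into $C_1$.
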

\section{Ray classes in $K$ and Hecke $L$-functions}\label{section-ray-hecke}
In this section, we define the notion of ray classes in the number field $K$ (which generalize arithmetic progressions over the integers), and Hecke $L$-functions (which generalize Dirichlet $L$-functions over $\Q$). Proofs of the various assertions in this section can be found in \cite{neukirch}*{Chapters 6 and 8}.

Let $J^{\mathfrak{q}}$ be the set of fractional ideals coprime to $\mf{q}$, and let $P^{\mf{q}}$ denote the set of principal fractional ideals $(\dc{a})$ such that there exist $\dc{b},\dc{c} \in \mc{O}_K$ with $\dc{b} \equiv \dc{c} \equiv 1\pmod{\mf{q}}$ and $(\dc{a}) = (\dc{b})(\dc{c})^{-1}$. Then, $H^{\mf{q}} := J^{\mf{q}}/P^{\mf{q}}$ is called the \textit{ray class group} modulo $\mf{q}$.

Let $J_1^{\mf{q}}$ be the set of principal fractional ideals coprime to $\mf{q}$. Then, $J_1^{\mf{q}}$ is in one-to-one correspondence with the set 
\[
\{\{\dc{u} \dc{a}: u \in \mathcal{O}_K^{\times}\}: \dc{a} \in \mc{O}_K, ~ (\dc{a},\mathfrak{q})=1\}.
\]

We will write $\mf{a} \equiv \mf{b}\pmod{\mf{q}}$ if $\mf{a}$ and $\mf{b}$ represent the same equivalence class in the ray class group $H^{\mf{q}}$. Similarly, for $\dc{a} \in \mc{O}_K$, we write $\mf{b} \equiv \dc{a} \pmod{\mf{q}}$ if $\mf{b}$ is principal and there exists a generator $\dc{b}$ of $\mathfrak{b}$ such that $\dc{b} \equiv a \pmod{\mf{q}}$ (in other words, when $\mf{b}$ and $(a)$ represent the same equivalence class in the ray class group $H^{(q)}$). Put yet another way, for a principal ideal $\mf{b} = (\dc{b})$, we write $\mf{b} \equiv a \pmod{q}$ if there exists a unit $\dc{u} \in K$ such that $\dc{b} \equiv \dc{u}\dc{a} \pmod{\mf{q}}$. 

For any ideal $\mf{q}$ of $\mc{O}_K$, let $h(\mf{q}) = |H^{\mf{q}}|$ denote the size of the ray class group modulo $\mf{q}$. 

Let $\varphi(\mf{q})$ denote the cardinality of the unit group of $\mc{O}_K/\mf{q}$, i.e.,
\begin{equation}\label{phi}
	\varphi(\mf{a}) = N_{K}(\mf{a}) \prod_{\mf{p} | \mf{a}} \Big(1 - \frac{1}{N_{K}(\mf{p})} \Big) .
\end{equation}

Let $h = h((1))$ denote the class number of $K$. Let $U$ denote the unit group of $\mc{O}_K$ and $U_{\mathfrak{q}, 1}=\left\{\dc{a} \in \mathcal{O}_K^*: \dc{a} \equiv 1 \pmod{\mf{q}}, \dc{a} \succ 0\right\}$. Since $K$ has no real embeddings, and the units of $\mc{O}_K$ occupy distinct residue classes modulo $\mf{q}$ by assumption, the quantities $h(\mf{q})$ and $\varphi(\mf{q})$ are related by the following:
\begin{equation}\label{hphi}
	h(\mathfrak{q})= \varphi(\mathfrak{q}) \frac{h}{|U|},
\end{equation}
where $|U|=4$ if $K = \Q(i)$, $|U| = 3$ if $K = \Q(\sqrt{-3})$, and $|U| = 2$ otherwise.

For any character $\chi_0$ of $H^{\mf{q}}$, we define $\chi(\mf{a}) = \chi_0([\mf{a}])$ if $(\mf{a}, \mf{q})=1$ and $\chi(\mf{a})=0$ otherwise, and call $\chi$ a finite Hecke character modulo $\mf{q}$. Throughout this paper, $\chi$ will denote a finite order Hecke character of $K$. For $
\Re(s) > 1$, Hecke $L$-function $L(s,\chi)$ is defined by 
\[
L(s,\chi) = \sum_{\mf{a} \ideal \mc{O}_K} \frac{\chi(\mf{a})}{(N_K(\mf{a}))^s}.
\] 
If $\chi$ is nonprincipal, then $L(s,\chi)$ extends to an entire function, while if $\chi$ is principal, then $L(s,\chi)$ extends to a meromorphic function on the complex plane with a single simple pole at $s=1$.

\section{Page's Theorem and Bombieri--Vinogradov}\label{section-page-bv}
\begin{lemma}\label{landaupage}
(Landau-Page theorem for number fields). Let $Q \geqslant 100$. Suppose that $L(s, \chi)=0$ for some primitive character $\chi$ of modulus $\mf{q}$, $N_{K}(\mf{q}) \leq Q$, and some $s=\sigma+i t$. Then, we have
$$
1-\sigma \gg \frac{1}{\log (Q(1+|t|))},
$$
or else $t=0$ and $\chi$ is a quadratic character $\chi_Q$, which is unique.
\end{lemma}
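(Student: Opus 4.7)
I plan to adapt the classical proof of the Landau--Page theorem for Dirichlet $L$-functions to the Hecke setting. Since $\chi$ is a finite-order character of the imaginary quadratic field $K$, the completed $L$-function
\begin{equation*}
\Lambda(s,\chi) = \left(\frac{|d_K|\,N_K(\mf{q})}{4\pi^2}\right)^{s/2}\Gamma(s) L(s,\chi)
\end{equation*}
is entire of order $1$ and satisfies a functional equation relating $s$ and $1-s$. Differentiating the Hadamard product representation and invoking Stirling yields, for $s = \sigma + it$ with $1 < \sigma \leq 2$,
\begin{equation*}
-\Re\frac{L'}{L}(s,\chi) \leq -\sum_{\rho}\Re\frac{1}{s-\rho} + O\bigl(\log(Q(1+|t|))\bigr),
\end{equation*}
with $\rho$ ranging over the non-trivial zeros of $L(s,\chi)$. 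An analogous identity for $\zeta_K$ carries the extra term $1/(s-1)$ from the simple pole at $s=1$.

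Next, I apply the standard positivity argument: non-negativity of $\Lambda(\mf a)$ together with $3 + 4\cos\theta + \cos(2\theta) \geq 0$ gives, for $\sigma > 1$ and real $t$,
\begin{equation*}
0 \leq -3\frac{\zeta_K'(\sigma)}{\zeta_K(\sigma)} - 4\Re\frac{L'}{L}(\sigma+it,\chi) - \Re\frac{L'}{L}(\sigma+2it,\chi^2).
\end{equation*}
The first term is at most $3/(\sigma-1) + O(1)$. If $\rho = \beta + i\gamma$ is a zero of $L(s,\chi)$, then retaining only this zero in the Hadamard sum contributes $-4/(\sigma-\beta)$ to the middle term. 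The last term is $O(\log(Q(1+|t|)))$ \emph{unless} $\chi^2$ is principal, in which case $L(s,\chi^2)$ inherits a pole whose contribution to $\Re L'/L(\sigma+2it,\chi^2)$ is dangerous only when $t=0$. Outside this exceptional regime, choosing $\sigma = 1 + \eta/\log(Q(1+|\gamma|))$ for a sufficiently small absolute constant $\eta > 0$ forces $1-\beta \gg 1/\log(Q(1+|\gamma|))$, as desired.

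The only breakdown is when $\gamma = 0$ and $\chi^2$ is principal, so $\chi$ must be a primitive quadratic Hecke character. To establish uniqueness, I argue by contradiction: suppose two distinct primitive quadratic Hecke characters $\chi_1 \ne \chi_2$ of conductor norm at most $Q$ both admit real zeros $\beta_1, \beta_2$ with $1 - \beta_i < c/\log Q$. By class field theory each $\chi_i$ corresponds to a quadratic extension $L_i/K$, and
\begin{equation*}
\zeta_{L_1 L_2}(s) = \zeta_K(s)\, L(s,\chi_1)\, L(s,\chi_2)\, L(s,\chi_1\chi_2)
\end{equation*}
is the Dedekind zeta function of the biquadratic compositum $L_1 L_2$, hence a Dirichlet series with non-negative coefficients. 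Positivity of $-\zeta_{L_1 L_2}'/\zeta_{L_1 L_2}(\sigma)$ at $\sigma = 1 + 1/\log Q$, combined with the Hadamard-based bounds from the first paragraph applied to each factor (and a trivial $O(\log Q)$ bound on the $\chi_1\chi_2$ term, which is non-principal because $\chi_1\ne\chi_2$ are real), yields $1/(\sigma-\beta_1) + 1/(\sigma-\beta_2) \leq 1/(\sigma-1) + O(\log Q)$, which is contradictory once $c$ is small enough. The chief technical hurdle is the uniform zero-density estimate for $L(s,\chi)$ in unit horizontal strips that underlies the Stirling step; this is classical but must be re-derived for Hecke $L$-functions, and simplifies in the imaginary quadratic setting because the archimedean factor reduces to a single $\Gamma(s)$ and there are no issues arising from real embeddings.
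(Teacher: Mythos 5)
Your route is genuinely different from the paper's: the paper does not give a direct argument at all, but obtains the lemma by citing two known results (Lemma 2.3 of Lagarias--Montgomery--Odlyzko together with Theorem A of Hoffstein--Ramakrishnan), whereas you reconstruct the classical Landau--Page proof in the Hecke setting from scratch. Your outline of that reconstruction is sound in its main components: since $\chi$ is a finite-order (ray class) character of an imaginary quadratic field, the completed $L$-function has a single $\Gamma$-factor, the Hadamard/Stirling bound $-\Re\frac{L'}{L}(s,\chi)\le-\sum_\rho\Re\frac1{s-\rho}+O(\log(Q(1+|t|)))$ goes through verbatim, the $3+4\cos\theta+\cos 2\theta\ge 0$ positivity applied to $\Lambda(\mathfrak a)$ gives the zero-free region when $\chi^2$ is non-principal, and your uniqueness argument via the non-negativity of the coefficients of $\zeta_K(s)L(s,\chi_1)L(s,\chi_2)L(s,\chi_1\chi_2)$ (equivalently, of $\sum_{\mathfrak a}\Lambda(\mathfrak a)(1+\chi_1(\mathfrak a))(1+\chi_2(\mathfrak a))N_K(\mathfrak a)^{-\sigma}\ge 0$, so class field theory is not even needed) is exactly the standard two-character exclusion and correctly uses that $\chi_1\chi_2$ is non-principal. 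What your approach buys is self-containedness and explicit control of how the constants depend on $K$; what the paper's citation buys is brevity and the delegation of the analytic bookkeeping (conductor--discriminant issues, imprimitive $\chi^2$, etc.) to the references.

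One step, as stated, would not deliver the full conclusion: you assert that when $\chi^2$ is principal the pole term in $-\Re\frac{L'}{L}(\sigma+2it,\chi^2)$ is ``dangerous only when $t=0$.'' In fact $\Re\frac1{\sigma-1+2it}$ is comparable to $\frac1{\sigma-1}$ whenever $|t|\ll\sigma-1$, so the argument you describe only shows that an offending zero of a real $\chi$ has $|t|\ll 1/\log Q$, not $t=0$, while the lemma demands that the exceptional zero be literally real. To close this you need the standard supplementary step for real $\chi$ and a putative zero $\beta+i\gamma$ with $0<|\gamma|$ small: use the positivity of $-\frac{\zeta_K'}{\zeta_K}(\sigma)-\frac{L'}{L}(\sigma,\chi)$ (coefficients $\Lambda(\mathfrak a)(1+\chi(\mathfrak a))\ge 0$) and keep \emph{both} conjugate zeros $\beta\pm i\gamma$ in the Hadamard sum, which yields $\frac{2(\sigma-\beta)}{(\sigma-\beta)^2+\gamma^2}\le\frac1{\sigma-1}+O(\log Q)$ and, after choosing $\sigma-1\asymp 1/\log Q$, excludes non-real zeros violating the region. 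This supplement is classical and adapts to the Hecke setting with no new difficulty, but it is genuinely missing from your sketch and must be added for the statement ``or else $t=0$'' to follow.
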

\begin{proof}
This follows by combining \cite{lmo}*{Lemma 2.3} and \cite{hoffstein-ramakrishnan}*{Theorem A}.  
\end{proof}

\begin{corollary}\label{landaupagecorollary}
Let $Q \geqslant 100$. Then there exists an ideal $\mf{B}_Q$ which either is equal to $(1)$ or is a prime with the property that
$$
1-\sigma \gg \frac{1}{\log (Q(1+|t|))}
$$
whenever $L(\sigma+i t, \chi)=0$ and $\chi$ is a character mod $\mf{q}$ with $N_{K}(\mf{q}) \leq Q$ and $\mf{q}$ coprime to $\mf{B}_Q$.
\end{corollary}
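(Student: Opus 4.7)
The strategy is to isolate the possible exceptional character granted by Lemma~\ref{landaupage} and kill it by forbidding its conductor. By Lemma~\ref{landaupage}, either every primitive character $\chi$ of modulus $\mf{q}$ with $N_K(\mf{q}) \leq Q$ satisfies the claimed zero-free region, or else there is a \emph{unique} exceptional primitive quadratic character $\chi_Q$ with conductor $\mf{q}_Q$ (of norm $\leq Q$). In the first case I would simply set $\mf{B}_Q = (1)$. In the second case, I would choose $\mf{B}_Q$ to be any prime ideal dividing $\mf{q}_Q$; such a prime exists because $\chi_Q$ is quadratic, hence nontrivial, so its conductor $\mf{q}_Q$ is a proper ideal. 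In either case, $\mf{B}_Q$ is as described in the corollary.

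Next I would verify the conclusion. Let $\chi$ be a character mod $\mf{q}$ with $N_K(\mf{q}) \leq Q$ and $\gcd(\mf{q}, \mf{B}_Q) = (1)$, and suppose $L(\sigma + it, \chi) = 0$. Let $\chi^*$ denote the primitive character inducing $\chi$, with conductor $\mf{q}^* \mid \mf{q}$; then $\gcd(\mf{q}^*, \mf{B}_Q) = (1)$ as well. If $\mf{B}_Q = (1)$, then there is no exceptional character to worry about; otherwise $\mf{B}_Q \mid \mf{q}_Q$ but $\mf{B}_Q \nmid \mf{q}^*$, forcing $\mf{q}^* \neq \mf{q}_Q$ and hence $\chi^* \neq \chi_Q$. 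Either way, $\chi^*$ is a primitive character of conductor of norm $\leq Q$ which is not exceptional, so Lemma~\ref{landaupage} applied to $\chi^*$ gives the zero-free region for $L(s,\chi^*)$.

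Finally, I would transfer the zero-free region from $L(s,\chi^*)$ to $L(s,\chi)$ using the standard factorization
\[
L(s,\chi) = L(s,\chi^*) \prod_{\mf{p} \mid \mf{q},\, \mf{p} \nmid \mf{q}^*} \left(1 - \frac{\chi^*(\mf{p})}{N_K(\mf{p})^s}\right).
\]
Each Euler factor vanishes only where $|N_K(\mf{p})^{-s}| = 1$, i.e.\ on $\Re(s) = 0$, which is bounded away from the strip $\sigma > 1 - c/\log(Q(1+|t|))$ appearing in the statement (for $Q \geq 100$). Hence any zero of $L(s,\chi)$ in that strip must already be a zero of $L(s,\chi^*)$, so the bound from the previous step applies and gives the desired inequality for $\sigma$.

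The only real subtlety—which I would flag as the main point to check—is that the exceptional conductor $\mf{q}_Q$ is a genuinely proper ideal, so that a prime divisor $\mf{B}_Q$ exists. This uses the fact that Lemma~\ref{landaupage} specifies $\chi_Q$ to be \emph{quadratic}, ruling out the trivial character (whose $L$-function is $\zeta_K$); the corresponding possible Siegel zero of $\zeta_K$ is absorbed into the $K$-dependent implicit constant. Everything else is bookkeeping with conductors and Euler factors.
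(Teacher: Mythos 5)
Your proof follows essentially the same route as the paper: the paper's one-line proof simply takes $\mf{B}_Q$ to be the prime factor of largest norm of the conductor of $\chi_Q$ (or $(1)$ if no exceptional character exists), and leaves the primitive-to-imprimitive transfer implicit, which you spell out correctly via the Euler-factor identity. One caveat concerns exactly the point you flagged: the claim that $\chi_Q$ is ``quadratic, hence nontrivial, so its conductor is a proper ideal'' is not valid over a number field, since a nontrivial quadratic ray-class character can be unramified, i.e.\ have conductor $(1)$, whenever the class number of $K$ is even (e.g.\ $K=\Q(\sqrt{-5})$); the paper's own terse proof tacitly glosses over the same possibility. The repair is the maneuver you already invoke for $\zeta_K$: the unramified characters form a finite set depending only on $K$, so any real zero of their $L$-functions is bounded away from $1$ by a $K$-dependent constant and can be absorbed into the implied constant, allowing one to take $\mf{B}_Q=(1)$ in that case.
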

\begin{proof}
This follows from Lemma \ref{landaupage} with $\mf{B}_Q$ the prime factor of largest norm of the conductor of $\chi_Q$. (If no such $\chi_Q$ exists, set $\mf{B}_Q = (1)$.)
\end{proof}

A linear form is a function $L: \mc{O}_K \to \mc{O}_K$ of the form $\dc{l}_1 \dc{z}  + \dc{l}_2$ with $\dc{l}_1, \dc{l}_2 \in \mc{O}_K$ and $\dc{l}_1\neq 0$. Define
$\psi(x, \chi)=\psi_0(x, \chi)=\sum_{N_{K}(\mathfrak{a}) \leq x} \Lambda(\mathfrak{a}) \chi(\mathfrak{a}), \quad \psi_k(x, \chi)=\int_1^x \psi_{k-1}(z, \chi) \frac{d z}{z} \quad($ for $k \geqslant 1)$.
Let
\begin{equation}
\psi_0(x, \mathfrak{a}, \mathfrak{q}) = \psi(x, \mathfrak{a}, \mathfrak{q})=\sum_{\substack{N_{K}(\mf{b}) \leq x \\ \mathfrak{b} \equiv \mathfrak{a} \pmod{\mathfrak{q}}}} \Lambda(\mathfrak{b}),
\end{equation}
and similarly, define
$$
\psi_k(x, \mathfrak{a}, \mathfrak{q})=\int_1^x \psi_{k-1}(z, \mathfrak{a}, \mathfrak{q}) \frac{d z}{z}.
$$
\begin{lemma}
For any ideal $\mf{q} \ideal \mc{O}_K$ and any $\dc{a} \in \mc{O}_K$,
	\begin{align*}
	\#\{\mf{p} \ideal \mc{O}_K: N_{K}(\mf{p}) \leq z, \mf{p} \equiv \dc{a} \pmod{\mf{q}} \} = \#\{\dc{p} \in \mc{O}_K: N_{K}(\dc{p}) \leq z, \dc{p} \equiv \dc{a} \pmod{\mf{q}} \}.
	\end{align*}
	Consequently, we can unambiguously define
	\begin{align*}
	\pi(z; \mf{q}, \dc{a}) &:= \#\{\mf{p} \ideal \mc{O}_K: N_{K}(\mf{p}) \leq z, \mf{p} \equiv \dc{a} \pmod{\mf{q}} \} \\
	&= \#\{\dc{p} \in \mc{O}_K: N_{K}(\dc{p}) \leq z, \dc{p} \equiv \dc{a} \pmod{\mf{q}} \}.
	\end{align*}
 
\end{lemma}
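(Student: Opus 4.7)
The plan is to construct an explicit norm-preserving bijection between the two sets. The key mechanism is the standing assumption (from Section \ref{section-notation}) that $\mf{q}$ is coprime to the difference of any two distinct units of $\mc{O}_K$, so that distinct units represent distinct residue classes modulo $\mf{q}$.

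In one direction, I would send an element $\dc{p}$ counted on the right (so $(\dc{p})$ is a prime ideal of norm at most $z$ with $\dc{p} \equiv \dc{a} \pmod{\mf{q}}$) to the principal prime ideal $(\dc{p})$. This ideal has norm $N_K((\dc{p})) = N_K(\dc{p}) \leq z$, and since $\dc{p}$ itself is a generator realizing the congruence, $(\dc{p}) \equiv \dc{a} \pmod{\mf{q}}$ in the ray-class sense.

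In the reverse direction, given a prime ideal $\mf{p}$ counted on the left, $\mf{p}$ is principal by the definition of the ray-class congruence; fix any generator $\dc{p}_0$, so that every generator of $\mf{p}$ has the form $\dc{u}\dc{p}_0$ with $\dc{u} \in U$. By the congruence condition, at least one choice of $\dc{u} \in U$ satisfies $\dc{u}\dc{p}_0 \equiv \dc{a} \pmod{\mf{q}}$. The crucial observation is that this $\dc{u}$ is unique: if $\dc{u}_1\dc{p}_0 \equiv \dc{u}_2\dc{p}_0 \pmod{\mf{q}}$, then $(\dc{u}_1 - \dc{u}_2)\dc{p}_0 \equiv 0 \pmod{\mf{q}}$; since $\mf{p}$ is coprime to $\mf{q}$, so is its generator $\dc{p}_0$, giving $\dc{u}_1 \equiv \dc{u}_2 \pmod{\mf{q}}$, and the standing assumption on $\mf{q}$ then forces $\dc{u}_1 = \dc{u}_2$. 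Sending $\mf{p}$ to this unique generator $\dc{u}\dc{p}_0$ therefore yields a well-defined inverse map.

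The only substantive step is this uniqueness claim, and the standing assumption on $\mf{q}$ is tailored precisely to make it work; without it, several associates of a single prime element could all be congruent to $\dc{a}$ modulo $\mf{q}$, inflating the right-hand count. Once uniqueness is in hand, the two maps are mutually inverse and both preserve the norm condition, so the two cardinalities agree.
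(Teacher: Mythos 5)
Your proposal is correct and follows essentially the same route as the paper: both proofs set up the bijection $\mf{p} \mapsto$ (the unique generator of $\mf{p}$ congruent to $\dc{a}$ modulo $\mf{q}$), with inverse $\dc{p} \mapsto (\dc{p})$, relying on the standing assumption that distinct units lie in distinct residue classes modulo $\mf{q}$. In fact you spell out a detail the paper leaves implicit, namely that the generator is invertible modulo $\mf{q}$ (coprimality of $\mf{p}$ with $\mf{q}$) so that $\dc{u}_1\dc{p}_0 \equiv \dc{u}_2\dc{p}_0$ forces $\dc{u}_1 \equiv \dc{u}_2$, which is exactly the step that makes uniqueness work.
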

\begin{proof}
Recall that we assumed in Section 2 that for units $\dc{u}, \dc{u}' \in \mc{O}_K^{\times}$, we have $\dc{u} \not\equiv \dc{u}' \pmod{\mf{q}}$. It follows that for any ideal $\mf{p}$ with $\mf{p} \equiv \dc{a}\pmod{\mf{q}}$, there is a \textit{unique} generator $\dc{p}$ of $\mf{p}$ with $\dc{p} \equiv \dc{a} \pmod{\mf{q}}$; define $f(\mf{p}) = \dc{p}$. 

It is evident that the function $f$ is injective. Furthermore, for any $\dc{p} \in \mc{O}_K$ with $\dc{p} \equiv \dc{a} \pmod{\mf{q}}$, the ideal $\mf{p} = (\dc{p})$ is a principal ideal with $\mf{p} \equiv \dc{a} \pmod{\mf{q}}$. Thus, we have established the bijection below, which proves the lemma:
\begin{align*}
 \#\{\mf{p} \ideal \mc{O}_K: N_{K}(\mf{p}) \leq z, \mf{p} \equiv \dc{a} \pmod{\mf{q}} \} \xleftrightarrow{} \#\{\dc{p} \in \mc{O}_K: N_{K}(\dc{p}) \leq z, \dc{p} \equiv \dc{a} \pmod{\mf{q}} \}.
\end{align*}
\end{proof}
The main result of this section is the following: 
\begin{lemma}\label{page-bv}
Fix $\eps > 0$. Let $x$ be a large quantity. Let $Q = \exp(c_1 \sqrt{\log x})$. Then, there exists an ideal $\mf{B}$ of $\mc{O}_K$ satisfying $N_{K}(\mf{B}) \leq x$, which is either $(1)$ or a prime, such that	
\begin{equation}\label{assume}
\sideset{}{'}\sum_{\substack{N_{K}(\mf{q})<x^{1 / 3-\epsilon} \\
(\mf{q}, \mf{B})=1}} \sup _{\substack{(\dc{a}, \mf{q})=1 \\
z \leq x \log ^4 x}}\left|\pi(z ; \mf{q}, \dc{a})-\frac{\Li(z)}{h(\mf{q})}\right| = O_\eps \left( x \exp(-c\sqrt{\log x})\right).
\end{equation}
\end{lemma}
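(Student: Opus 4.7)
The plan is to adapt the classical proof of the Bombieri--Vinogradov theorem (character orthogonality, reduction to primitive characters, large sieve) to the imaginary quadratic setting, and to use the Landau--Page zero-free region of Corollary~\ref{landaupagecorollary} to sharpen the error from the standard $(\log x)^{-A}$ shape to Siegel--Walfisz strength. By partial summation we may replace $\pi(z;\mf q,\dc a)$ by the corresponding $\psi(z;\mf q,\dc a)$ at a harmless loss of $O(x^{1/2}\log^{2} x)$. Character orthogonality on the ray class group $H^{\mf q}$ then gives
$$
\psi(z;\mf q,\dc a)-\frac{z}{h(\mf q)}=\frac{1}{h(\mf q)}\sum_{\chi\neq\chi_0}\overline{\chi(\dc a)}\,\psi(z,\chi)+O(\log z),
$$
with the sum over non-principal $\chi\pmod{\mf q}$; writing each such $\chi$ as being induced by its primitive inducer $\chi^{*}$ of conductor $\mf f\mid\mf q$ and rearranging reduces \eqref{assume} (up to logarithmic losses absorbed by the savings) to an inequality of the shape
$$
S\;:=\;\sideset{}{'}\sum_{\substack{N_K(\mf f)<x^{1/3-\epsilon}\\ (\mf f,\mf B)=1}}\;\sum_{\chi^{*}\pmod{\mf f}}^{*}\;\max_{z\leq x\log^{4} x}|\psi(z,\chi^{*})|\;\ll\;x\exp(-c\sqrt{\log x}),
$$
where $\sum^{*}$ indicates summation over primitive characters.

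I would fix $\mf B=\mf B_{x^{1/3}}$ from Corollary~\ref{landaupagecorollary}, so that $N_K(\mf B)\leq x^{1/3}\leq x$ and every primitive character appearing in $S$ enjoys the zero-free region. Then split $S$ at the threshold $Q_0:=\exp(c_1\sqrt{\log x})$ for a small constant $c_1$. For $N_K(\mf f)\leq Q_0$ the zero-free region has width $\gg 1/\sqrt{\log x}$, so the standard explicit formula
$$
\psi(z,\chi^{*})=-\sum_{|\Im\rho|\leq T}\frac{z^{\rho}}{\rho}+O\!\left(\frac{z(\log(z\,N_K(\mf f)))^{2}}{T}\right),
$$
truncated at $T=\exp(\sqrt{\log x})$ and combined with the standard zero-counting bound $\#\{\rho:|\Im\rho-t|\leq 1\}\ll\log(Q_{0}(|t|+2))$, gives the individual estimate $|\psi(z,\chi^{*})|\ll z\exp(-c\sqrt{\log z})$. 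As the number of pairs $(\mf f,\chi^{*})$ in this range is $\ll Q_0^{2}=\exp(2c_{1}\sqrt{\log x})$, their contribution to $S$ is $\ll x\exp(-c'\sqrt{\log x})$ once $c_1$ is chosen small enough.

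The main obstacle is the middle range $Q_0<N_K(\mf f)<x^{1/3-\epsilon}$, where the zero-free region has width only $\gg 1/\log x$ and no useful individual saving is available. Here one must average over characters via a large sieve inequality for primitive Hecke characters over $\mc O_K$---the imaginary-quadratic analogue of Bombieri--Davenport---of the shape
$$
\sum_{N_K(\mf f)\leq R}\sum_{\chi^{*}\pmod{\mf f}}^{*}\left|\sum_{N_K(\mf a)\leq N}a_{\mf a}\chi^{*}(\mf a)\right|^{2}\;\ll\;(N+R^{2})\sum_{\mf a}|a_{\mf a}|^{2}.
$$
Combined with Vaughan's identity applied to $\Lambda$ on $\mc O_K$ (yielding bilinear Type I / Type II decompositions), Gallagher's device for eliminating the supremum over $z$, and the log-free zero-density estimate for primitive Hecke $L$-functions obtained from Corollary~\ref{landaupagecorollary} via a Hal\'asz-type argument, this furnishes the required mean bound in the middle range. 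The cutoff $N_K(\mf q)<x^{1/3-\epsilon}$ is precisely the range in which the $R^{2}$-loss in the large sieve remains dominated by $N=x\log^{4} x$; combining the two ranges yields \eqref{assume}.
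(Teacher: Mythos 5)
Your outline is essentially sound and, at the architectural level, matches the paper's proof: both arguments split the moduli at a threshold of size $\exp(c\sqrt{\log x})$, take $\mf{B}$ from Corollary~\ref{landaupagecorollary} so that the Landau--Page zero-free region plus the explicit formula for finite-order Hecke characters handles the small-conductor range with an individual saving $\ll x\exp(-c'\sqrt{\log x})$, and treat the remaining range $\exp(c\sqrt{\log x}) < N_K(\mf{q}) < x^{1/3-\eps}$ by an averaged Bombieri--Vinogradov-type mean value. The genuine difference is how that middle range is obtained: you propose to re-derive it from scratch (large sieve for primitive ray class characters, Vaughan's identity giving Type I/II bilinear sums, Gallagher's device for the supremum in $z$), whereas the paper simply quotes the mean-value estimate of Wilson \cite{wilson} for the smoothed quantity $\psi_3(z,\mf{a},\mf{q})$, chooses the parameters $D=\exp(5c\sqrt{\log x})$, $T=x^{1/9}$, $Q=x^{1/3-\eps}$ there, and then unsmooths \`a la Gallagher; the paper also fixes $\mf{B}=\mf{B}_Q$ at the small threshold $Q=\exp(c_1\sqrt{\log x})$ rather than your $\mf{B}_{x^{1/3}}$ (both choices are compatible with the statement, since only the small-conductor range actually uses the zero-free region). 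Your route is more self-contained but requires you to actually establish the Hecke-character large sieve and the bilinear decomposition over $\mc{O}_K$, which is exactly the content outsourced to \cite{wilson}. One caveat: your appeal to a ``log-free zero-density estimate obtained from Corollary~\ref{landaupagecorollary} via a Hal\'asz-type argument'' is not correct as stated --- a zero-free region does not yield a zero-density estimate, and such estimates are independent theorems --- but this ingredient is also unnecessary: in Vaughan's arrangement of the proof the large sieve plus the small-conductor zero-free region already give the bound $\ll x\exp(-c\sqrt{\log x})$ in the middle range, so deleting that sentence leaves a viable argument. (Also, the cutoff $x^{1/3-\eps}$ is not ``precisely'' where the $R^2$ loss is dominated by $N$; it is comfortably inside the admissible range, which is what matters here.)
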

\begin{proof}
 Let $\mf{B}$ be the quantity $\mf{B}_Q$ guaranteed by Corollary \ref{landaupagecorollary} with this value of $Q$. For the remainder of this proof, the implied constants may depend on $\eps$. By the display following \cite{wilson}*{(51)}, we have that (if $T(\mf{q})$ denotes the number of residue classes of $\mf{q}$ containing a unit),
\[
\begin{aligned}
\sideset{}{'}\sum_{D<N_{K}(\mathfrak{q}) \leq Q} & \max _{z \leq x \log^4 x} \max _{\substack{\mathfrak{a} \pmod{\mathfrak{q}} \\
(\mathfrak{a}, \mathfrak{q})=1}} \frac{1}{T(\mathfrak{q})}\left|\psi_3(z, \mathfrak{a}, \mathfrak{q})-\frac{z}{h(\mathfrak{q})}\right| \\
& \ll x D^{-1} \log ^{11} x+x^{2/3} D Q \log ^{2 n+9 } x+\frac{x Q}{T^3} \log^5 x .
\end{aligned}
\]
Furthermore, by Corollary \ref{landaupagecorollary} (combined with a generalization of the explicit formula for $\psi(z,\chi)$ in \cite{davenport}*{Chapter 19} to finite order Hecke characters of imaginary quadratic fields, which can be proved in the same manner as for $\Q$; see \cite{gaussianhecke}{Section 2.9} for the explicit formula when $K=\Q(i)$, and see  \cite{lo}{Section 9} for a more general statement applying for arbitrary Hecke characters (not necessarily finite order) of any number field), we have that there exists some (small) $c$ such that whenever $1 < N_{K}(\mf{q}) \leq \exp(6c\sqrt{\log x})$, $z \leq x \log^4(x)$ and $(\mf{q},\mf{B})=1$, 
\begin{equation*}
\frac{1}{\varphi(\mf{q})} \sideset{}{^*}\sum_{\chi}|\psi(z,\chi)| \ll x \exp(-9c\sqrt{\log x}),	
\end{equation*}
where the asterisk over the sum above indicates that it is restricted to primitive Hecke characters of $H^{\mf{q}}$.

Choosing  $D = \exp(5c\sqrt{\log x})$, $T = x^{1/9}$ and $Q = x^{1/3-\eps}$, we find that (since $T(\mf{q}) = |U|$ is a constant depending on $K$, by our assumption that the units of $K$ occupy distinct residue classes modulo $\mf{q}$),
\[
\begin{aligned}
\sum_{D<N_{K}(\mathfrak{q}) \leq Q}^{\prime} & \max _{z \leq x \log^4 x} \max _{\substack{\mathfrak{a} \pmod{\mathfrak{q}} \\
(\mf{a}, \mathfrak{q})=1}} \left|\psi_3(z, \mathfrak{a}, \mathfrak{q})-\frac{z}{h(\mathfrak{q})}\right| \ll x\exp(-4c\sqrt{\log x}). 
\end{aligned}
\] 
Applying the same unsmoothing argument as in \cite{gallagher}{Page 6}, we find that
\[
\begin{aligned}
\sum_{D<N_{K}(\mathfrak{q}) \leq Q}^{\prime} & \max _{z \leq x \log^4 x} \max _{\substack{\mathfrak{a} \pmod{\mathfrak{q}} \\
(\mf{a}, \mathfrak{q})=1}} \left|\psi(z, \mathfrak{a}, \mathfrak{q})-\frac{z}{h(\mathfrak{q})}\right| \ll x\exp(-3c\sqrt{\log x}). 
\end{aligned}
\] 
It follows that
\[
\begin{aligned}
& \sideset{}{'}\sum_{\substack{N_{K}(\mf{q})<x^{1 / 3 -\epsilon} \\
(\mf{q}, \mf{B})=1}} \sup _{\substack{(\dc{a}, \mf{q})=1 \\
z \leq x \log ^4 x}}\left|\pi(z ; \mf{q}, \dc{a})-\frac{\Li(z)}{h(\mf{q})}\right| \ll x \exp (-c \sqrt{\log x})+\log x \\
& \times \sum_{\substack{N_{K}(\mf{q})\leq \exp (6 c \sqrt{\log x}) \\
(\mf{q}, \mf{B})=1}} \sum_\chi^* \sup _{z \leq x \log ^4 x} \frac{|\psi(z, \chi)|}{h(\mf{q})} \ll x\exp(-c\sqrt{\log x}).
\end{aligned}
\]
\end{proof}

\section{Rankin argument}\label{section-rankin}
Define $\mc{P}(x) = \prod_{N_{K}(\mf{p}) \leq x} \mf{p}$, and $P(x) = N_{K}(\mc{P}(x))$.
\begin{lemma}\label{y12}
	Let $x$ be a positive integer. Define $Y_1(x)$ to be the largest integer with the property that there exists a ball of radius $Y_1(x)$ such that all elements of the ball are divisible by a prime ideal of norm at most $x$.
	
	 Define $Y_2(x)$ to be the largest integer such that there exist residue classes $\dc{a}_{\mf{p}}$ for each prime ideal $\mf{p}$ of norm at most $x$ such that the set $\{\dc{z} \in \mc{O}_K: \dc{z} 
	 \equiv \dc{a}_{\mf{p}} \text{ for some } \mf{p} \text{ with } N_{K}(\mf{p}) \leq x\}$ contains a ball of radius $Y_2(x)$. Then $Y_1(x) = Y_2(x)$.
\end{lemma}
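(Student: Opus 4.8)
The plan is to establish the two inequalities $Y_1(x)\le Y_2(x)$ and $Y_2(x)\le Y_1(x)$ separately; throughout, I read the congruence ``$\dc z\equiv \dc a_{\mf p}$'' in the definition of $Y_2(x)$ as $\dc z\equiv \dc a_{\mf p}\pmod{\mf p}$, which is the only sensible interpretation in this Erd\H{o}s--Rankin-type setup. Both directions are purely structural, so the ``proof'' is really a matter of getting the bookkeeping right.

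For $Y_1(x)\le Y_2(x)$, I would take a ball $B(\dc x_0,Y_1(x))$ witnessing the definition of $Y_1(x)$, so that every $\dc z\in B(\dc x_0,Y_1(x))$ is divisible by some prime ideal $\mf p$ with $N_K(\mf p)\le x$. Then I would simply set $\dc a_{\mf p}=0$ for every such $\mf p$: divisibility of $\dc z$ by $\mf p$ is exactly the assertion $\dc z\equiv \dc a_{\mf p}\pmod{\mf p}$, so $B(\dc x_0,Y_1(x))$ is contained in the set appearing in the definition of $Y_2(x)$, and hence $Y_2(x)\ge Y_1(x)$.

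For the reverse inequality $Y_2(x)\le Y_1(x)$, suppose residue classes $\dc a_{\mf p}$ (one for each prime $\mf p$ with $N_K(\mf p)\le x$) are chosen so that the union $\bigcup_{N_K(\mf p)\le x}\{\dc w:\dc w\equiv \dc a_{\mf p}\pmod{\mf p}\}$ contains a ball $B(\dc x_0,Y_2(x))$. Since the prime ideals of norm at most $x$ are pairwise distinct, hence pairwise coprime, the Chinese Remainder Theorem for the Dedekind domain $\mc O_K$ --- i.e. the isomorphism $\mc O_K/\mc P(x)\cong\prod_{N_K(\mf p)\le x}\mc O_K/\mf p$ --- produces $\dc t\in\mc O_K$ with $\dc t\equiv -\dc a_{\mf p}\pmod{\mf p}$ for every $\mf p$ of norm at most $x$. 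Directly from the definition $B(\dc y,r)=\{\dc x\in\mc O_K:|N_K(\dc x-\dc y)|<r\}$, translation by $\dc t$ carries a ball to a ball: $B(\dc x_0+\dc t,Y_2(x))=\dc t+B(\dc x_0,Y_2(x))$. Given $\dc z\in B(\dc x_0+\dc t,Y_2(x))$, write $\dc z=\dc w+\dc t$ with $\dc w\in B(\dc x_0,Y_2(x))$, pick $\mf p$ with $\dc w\equiv \dc a_{\mf p}\pmod{\mf p}$, and note $\dc z=\dc w+\dc t\equiv \dc a_{\mf p}-\dc a_{\mf p}=0\pmod{\mf p}$, so $\mf p\mid \dc z$. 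Thus $B(\dc x_0+\dc t,Y_2(x))$ is a ball of radius $Y_2(x)$ all of whose elements are divisible by a prime ideal of norm at most $x$, giving $Y_1(x)\ge Y_2(x)$.

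There is no analytic difficulty here; the only points that genuinely need care are (i) pinning down the convention that the congruences defining $Y_2(x)$ are modulo $\mf p$, (ii) invoking CRT in $\mc O_K$ for the finitely many distinct prime ideals of norm at most $x$, and (iii) observing that an additive translate of a ball is again a ball, so the shifted configuration is legitimately admissible in the definition of $Y_1(x)$. For completeness one should also note that $Y_1(x)$ and $Y_2(x)$ are finite, so that ``largest integer'' is meaningful: a routine sieve lower bound shows that a ball of radius $r$ contains an element coprime to $\mc P(x)$ (indeed $\gg r/\log x$ such elements) once $r$ is large.
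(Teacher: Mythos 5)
Your proof is correct and takes essentially the same approach as the paper: one inclusion is immediate from a suitable choice of residue classes, and the other uses the Chinese Remainder Theorem in $\mathcal O_K$ to translate the covered ball into a ball of composite elements. The only (cosmetic) difference is in the direction $Y_1(x)\le Y_2(x)$: you take $\dc a_{\mf p}=0$ for all $\mf p$ and keep the same ball $B(\dc x_0,Y_1(x))$, whereas the paper takes $\dc a_{\mf p}=-\dc x_0\pmod{\mf p}$ and exhibits the ball $B(0,Y_1(x))$ — two equivalent ways of phrasing the same translation.
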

\begin{proof}
	First, we prove that $Y_1(x) \leq Y_2(x)$. Suppose that there exists a ball $B(\dc{x}_0,Y_1(x))$ such that all elements of the ball are divisible by a prime ideal of norm at most $x$. For each $\mf{p}$ with $N_K(\mf{p}) \leq x$, let $\dc{a}_{\mf{p}}$ be the congruence class of $-\dc{x}_0\pmod{\mf{p}}$. For any $\dc{z}$ with $N_{K}(\dc{z}) \leq Y_1(x)$, the element $\dc{x}_0 + \dc{z}$ of the ball $B(\dc{x}_0, Y_1(x))$ is divisible by $\mf{p}$ for some $\mf{p}$ with $N_K(\mf{p}) \leq x$, meaning that $\dc{x}_0 + \dc{z} \equiv 0 \pmod{\mf{p}}$, i.e., $\dc{z} \equiv -\dc{x}_0 \equiv \dc{a}_{\mf{p}} \pmod{\mf{p}}$. It follows that the set  $\{\dc{z} \in \mc{O}_K: \dc{z} 
	 \equiv \dc{a}_{\mf{p}} \text{ for some } \mf{p} \text{ with } N_{K}(\mf{p}) \leq x\}$ contains $B(0,Y_1(x))$.
	
	Second, we prove that $Y_2(x) \leq Y_1(x)$. Suppose that there exist residue classes $\dc{a}_{\mf{p}}$ for each prime ideal of norm at most $x$ such that the set $\{\dc{z} \in \mc{O}_K: \dc{z} 
	 \equiv \dc{a}_{\mf{p}} \text{ for some } \mf{p} \text{ with } N_{K}(\mf{p}) \leq x\}$ contains a ball $B(\dc{x}_0,Y_2(x))$ of radius $Y_2(x)$. Then, by the Chinese Remainder Theorem, there exists an element $\dc{y}_0$ that is congruent to $-\dc{a}_{\mf{p}}\pmod{\mf{p}}$ for each $\mf{p}$. If $\dc{z} \in \mc{O}_K$ with $N_K(\dc{z}) \leq Y_2(x)$, then for each $\mf{p}$ with $N_K(\mf{p}) \leq x$ we have that $\dc{z} + (\dc{y}_0+\dc{x}_0) = \dc{y}_0 + (\dc{x}_0+\dc{z}) \equiv -\dc{a}_{\mf{p}} + (\dc{x}_0 + \dc{z})~\pmod{\mf{p}}$. By assumption, for some $\mf{p}$ with $N_K(\mf{p}) \leq x$, we have that $\dc{x}_0 + \dc{z} \equiv \dc{a}_{\mf{p}} \pmod{\mf{p}}$. It follows that all elements of the ball $B(\dc{y}_0+\dc{x}_0,Y_2(x))$ are divisible by a prime ideal of norm at most $x$, and hence that $Y_2(x) \leq Y_1(x)$.
\end{proof}
Since $Y_1(x) = Y_2(x)$, we henceforth define $Y(x) = Y_1(x) = Y_2(x)$. The following lemma (cf. \cite{pollack}{pg. 4}) will be used throughout the paper:
\begin{lemma}\label{elements}
	Let $K$ be a quadratic field. The number of elements $\dc{u}$ of $\mc{O}_K$ satisfying a congruence condition $\dc{u} \equiv \dc{a}\pmod{\mathfrak{q}}$ and $N_K(\dc{u}) \leq x$ is 
	\begin{equation*}
	\frac{x}{N_{K}(\mathfrak{q})} + O\Big(1 + \Big( \frac{x}{N_{K}(\mathfrak{q})} \Big)^{1/2}\Big).
	\end{equation*}
\end{lemma}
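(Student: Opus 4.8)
The plan is to reduce the count to a lattice-point estimate and then invoke the geometry of numbers. The congruence condition $\dc{u} \equiv \dc{a} \pmod{\mathfrak{q}}$ describes (a translate of) the sublattice $\mathfrak{q}$ inside the rank-two lattice $\mc{O}_K \subset \C$, which has covolume $N_K(\mathfrak{q})$ relative to $\mc{O}_K$. The condition $N_K(\dc{u}) \le x$ asks for $\dc{u}$ to lie in the closed disc of radius $\sqrt{x}$ centered at the origin (since for $K$ imaginary quadratic, $N_K(\dc{u}) = |\dc{u}|^2$ under a fixed embedding $K \hookrightarrow \C$, where $|\cdot|$ is the Euclidean modulus up to the fixed scaling of the embedding). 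So the quantity to estimate is exactly the number of points of an affine lattice $\Lambda = \dc{a} + \mathfrak{q}$ lying in a disc $D$ of radius $R := \sqrt{x}$.

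First I would set up the geometry: fix the standard embedding of $K$ into $\C \cong \R^2$, under which $\mc{O}_K$ becomes a full lattice $L_0$ of covolume $\sqrt{|d_K|}/2$ (a constant depending only on $K$), and $\mathfrak{q}$ becomes a sublattice $L$ of covolume $c_K N_K(\mathfrak{q})$. Then I would apply the standard principle that the number of points of a translated lattice $L$ in a nice convex region $D$ equals $\operatorname{area}(D)/\operatorname{covol}(L)$ with an error controlled by $\operatorname{perimeter}(D)/\lambda_1(L)$ plus $O(1)$, where $\lambda_1(L)$ is the length of the shortest nonzero vector of $L$. Here $\operatorname{area}(D) = \pi R^2 = \pi x$ and, normalizing by the covolume of $\mc{O}_K$, the main term becomes $x/N_K(\mathfrak{q})$ as desired. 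For the error term, the perimeter is $O(\sqrt{x})$, and one has the crude bound $\lambda_1(L) \gg_K \sqrt{N_K(\mathfrak{q})}$: any nonzero $\dc{v} \in \mathfrak{q}$ satisfies $|\dc{v}|^2 = N_K(\dc{v}) \ge N_K(\mathfrak{q})$ since $(\dc{v}) \subseteq \mathfrak{q}$ forces $N_K(\mathfrak{q}) \mid N_K(\dc{v})$. This yields an error of size $O\big(\sqrt{x}/\sqrt{N_K(\mathfrak{q})}\big) + O(1) = O\big(1 + (x/N_K(\mathfrak{q}))^{1/2}\big)$, which is exactly the claimed bound.

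To make the lattice-point count rigorous I would either cite a standard reference (the van der Corput / Gauss circle argument for general lattices, e.g. via summing over shifted copies of a fundamental domain and bounding the boundary contribution) or give the short direct argument: tile the plane by fundamental parallelograms for $L$; each one meeting $\partial D$ contributes to the error, and the number of such parallelograms is $O(\operatorname{perimeter}(D)/\lambda_1(L)) + O(1)$ since each has diameter $O_K(\sqrt{N_K(\mathfrak{q})}/\lambda_1(L)) = O_K(1)$ times... — more carefully, one uses that a fundamental cell has all side lengths comparable to $\lambda_1(L)$ only after reduction, so it is cleanest to replace $L$ by a Minkowski-reduced basis and argue that the number of cells hitting an arc of length $\ell$ is $O(1 + \ell/\lambda_1(L))$ together with $O(1 + \operatorname{area}(D)/\operatorname{covol}(L) \cdot \text{(negligible)})$. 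The main obstacle is precisely this boundary bookkeeping: one must ensure the error is genuinely $O\big(1 + (x/N_K(\mathfrak{q}))^{1/2}\big)$ uniformly in $\mathfrak{q}$ — in particular uniformly as $N_K(\mathfrak{q})$ ranges over both the regime $N_K(\mathfrak{q}) \le x$ (where the $\sqrt{x/N_K(\mathfrak{q})}$ term dominates) and $N_K(\mathfrak{q}) > x$ (where the $O(1)$ term dominates and the disc may contain only $O(1)$ points). Handling the shape of a possibly very skew lattice $\mathfrak{q}$ correctly, so that the implied constant depends only on $K$ and not on $\mathfrak{q}$, is the one point that needs care; everything else is the classical Gauss circle estimate.
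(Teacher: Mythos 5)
Your approach is correct and is the standard one; the paper itself gives no proof here but simply cites Pollack, whose argument is the same lattice-point count. One remark that dissolves the skewness worry you raise at the end: for an ideal lattice $\mathfrak{q}$ in an imaginary quadratic field, the two successive minima are in fact always comparable, $\lambda_1(\mathfrak{q}) \asymp_K \lambda_2(\mathfrak{q}) \asymp_K \sqrt{N_K(\mathfrak{q})}$. Indeed, you already observed $\lambda_1 \gg \sqrt{N_K(\mathfrak{q})}$ from $|\dc{v}|^2 = N_K(\dc{v}) \ge N_K(\mathfrak{q})$; combined with Minkowski's second theorem $\lambda_1\lambda_2 \asymp_K \operatorname{covol}(\mathfrak{q}) \asymp_K N_K(\mathfrak{q})$ this forces $\lambda_2 \ll \sqrt{N_K(\mathfrak{q})} \ll \lambda_1 \le \lambda_2$. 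So a Minkowski-reduced fundamental cell has diameter $\asymp_K \sqrt{N_K(\mathfrak{q})}$, the number of cells meeting the circle of radius $\sqrt{x}$ is $O_K\bigl(1 + \sqrt{x/N_K(\mathfrak{q})}\bigr)$ with no further bookkeeping, and the implied constants depend only on $K$ uniformly over $\mathfrak{q}$, exactly as needed.
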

We record the following consequence of the lemma above:
\begin{corollary}\label{nottoofar}
	Let $K$ be an imaginary quadratic field. Then, for any ideal $\mf{q}$ and any residue class $\dc{a} \pmod{\mf{q}}$, there exists a nonzero element of $\mc{O}_K$ in the residue class $\dc{a}\pmod{\mf{q}}$ with norm $O(N_{K}(\mf{q}))$.
\end{corollary}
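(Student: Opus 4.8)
The plan is to apply Lemma \ref{elements} directly, with the parameter $x$ taken to be a large constant multiple of $N_K(\mf{q})$. Concretely, fix a constant $C = C(K)$ to be chosen, and set $x = C \cdot N_K(\mf{q})$. By Lemma \ref{elements}, the number of $\dc{u} \in \mc{O}_K$ with $\dc{u} \equiv \dc{a} \pmod{\mf{q}}$ and $N_K(\dc{u}) \leq C\, N_K(\mf{q})$ equals $C + O\bigl(1 + C^{1/2}\bigr)$, where the implied constant depends only on $K$. Since $C + O(1 + C^{1/2}) \to \infty$ as $C \to \infty$, we may choose $C$ large enough (in terms of $K$ alone, through the implied constant in Lemma \ref{elements}) that this count is at least $2$.

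The final step is to observe that at most one element of $\mc{O}_K$ has norm $0$, namely $0$ itself, so at least one of the (two or more) elements produced above is nonzero. That element lies in the residue class $\dc{a} \pmod{\mf{q}}$ and has norm at most $C\, N_K(\mf{q}) = O_K\bigl(N_K(\mf{q})\bigr)$, which is exactly the assertion of the corollary. (When $N_K(\mf{q}) = 1$, i.e.\ $\mf{q} = (1)$, the claim is trivial since any nonzero unit works, and it is also covered by the argument above.)

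I do not anticipate any genuine obstacle here; this is a routine counting deduction. The only point deserving a moment's care is the requirement that the element be \emph{nonzero}, which is why the argument passes to a lower bound of $2$ on the count rather than $1$ — the naive "count at least one" argument would fail precisely in the case $\dc{a} \equiv 0 \pmod{\mf{q}}$, where the guaranteed element could be $0$.
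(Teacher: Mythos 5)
Your proposal is correct and follows essentially the same route as the paper, which also deduces the corollary from Lemma \ref{elements} by taking $x \gg N_K(\mf{q})$ so that the main term dominates the error. You are in fact slightly more careful than the paper: by arranging the count to be at least $2$ rather than at least $1$, you explicitly handle the case $\dc{a} \equiv 0 \pmod{\mf{q}}$, where the single guaranteed element could be $0$ — a small point the paper's one-line proof glosses over.
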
	
\begin{proof}
This follows immediately from the fact that the main term in Lemma \ref{elements} is larger than the error term when $x \gg N_K(\mf{q})$.
\end{proof}
Let 
\begin{small}
\begin{equation}\label{gdef}
	G(x) = \max\{y: \text{There exists } \dc{x}_0 \in \mc{O}_K \text{ with } N_{K}(\dc{x}_0) \leq x \text{ and } B(\dc{x}_0,y) \cap \{\dc{p} \in \mc{O}_K: \dc{p} \text{ prime}\} = \emptyset\}.
\end{equation}
\end{small}

By Lemma \ref{y12}, there exists some element $\dc{a}_0$ of $\mc{O}_K$ such that every element of $B(\dc{a}_0,Y(x))$ is divisible by a prime ideal of norm at most $x$. By Corollary \ref{nottoofar}, there exists an element $\dc{a}_1 \neq 0$ of $\mc{O}_K$ of norm $O(P(x))$ with this property. Similarly, by Corollary \ref{nottoofar}, there also exists a nonzero element $\dc{b}$ in the ideal $\mc{P}(x)$, which necessarily has norm at least $P(x)$ and at most $O(P(x))$. By the triangle inequality, there exists some positive integer $n = O(1)$ such that $N_{K}(n\dc{b}+\dc{a}_1)$ is bounded below by $10P(x)$ and above by $O(P(x))$. Set $\dc{a} = n \dc{b} + \dc{a}_1$. Since we trivially have that $Y(x) \leq P(x)$, it follows that every element of the ball $B(\dc{a},Y(x))$ is of norm at least $P(x)$. Since any element of this ball is divisible by a prime ideal of norm at most $x$, it follows that any element of this ball is composite. In particular, it follows that $G(N_K(\dc{a})) \geq Y(x)$. By Theorem \ref{landauprimeideal}, $\log P(x) = (1+o(1))x$. Setting $y = N_{K}(\dc{a})$, we obtain that

\begin{equation}
	G(y) \geq Y((1+o(1)\log(y)).
\end{equation}

To prove Theorem \ref{mainthm}, it therefore suffices to show that 
\begin{equation}\label{suffices}
Y(x) \gg x \frac{\log x}{\log_2 x} \log_3 x. 
\end{equation}

We require the following result regarding smooth ideals in number fields, which is \cite{thorne}{Lemma 5.4}.
\begin{lemma}\label{lemma5.4}
Let $\Psi_K(x, y)$ be the number of ideals of norm $<x$ which are composed only of primes with norm $<y$, and write $u:=\log x / \log y$. Then for $1 \leq u \leq$ $\exp \left(c(\log y)^{3 / 5-\epsilon}\right)$ (for a certain constant $\left.c\right)$ we have
$$
\Psi_K(x, y) \ll x \log ^2 y \exp (-u(\log u+\log \log u+O(1)))
$$
\end{lemma}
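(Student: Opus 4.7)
The plan is to adapt the classical Rankin-trick proof of de Bruijn's estimate for $y$-smooth integers to the number field setting, substituting Landau's prime ideal theorem (Theorem \ref{landauprimeideal}) for the prime number theorem. For any parameter $\sigma \in (0,1)$, since every $y$-smooth ideal $\mathfrak{a}$ of norm at most $x$ satisfies $(x/N_K(\mathfrak{a}))^\sigma \ge 1$, one obtains the Rankin bound
\begin{equation*}
\Psi_K(x,y) \;\le\; x^\sigma \prod_{N_K(\mathfrak{p})<y}\left(1-N_K(\mathfrak{p})^{-\sigma}\right)^{-1}.
\end{equation*}

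I would then analyze the Euler product by taking logarithms, expanding $\log(1-t)^{-1}=t+O(t^2)$, and applying partial summation with Theorem \ref{landauprimeideal} to write
\begin{equation*}
\log\prod_{N_K(\mathfrak{p})<y}\left(1-N_K(\mathfrak{p})^{-\sigma}\right)^{-1} \;=\; \int_{2}^{y}\frac{t^{-\sigma}}{\log t}\,dt \;+\; O(\log\log y) \;+\; E(\sigma,y),
\end{equation*}
where $E(\sigma,y)$ captures the effect of the error term from Theorem \ref{landauprimeideal}. Setting $\sigma = 1-\alpha/\log y$ with $\alpha = \log u + \log\log u + O(1)$, the substitution $v = \alpha\log t/\log y$ shows the integral is asymptotically $e^\alpha/\alpha$, and a routine calculation (essentially identical to the $\Q$-case; see Tenenbaum, \emph{Introduction to Analytic and Probabilistic Number Theory}, Ch.~III.5) converts $x^\sigma$ times the exponentiated product into the required bound, with the polynomial losses in $\log y$ from secondary terms absorbed into the $\log^2 y$ prefactor.

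The main obstacle is controlling $E(\sigma,y)$ uniformly in the claimed range $u \le \exp(c(\log y)^{3/5-\epsilon})$. Using only the classical zero-free region underlying Theorem \ref{landauprimeideal} (which gives error $\exp(-c\sqrt{\log y})$), the forced constraint $1-\sigma \gg 1/\sqrt{\log y}$ would limit $u$ to $\exp(c\sqrt{\log y})$. To reach the advertised range, one must invoke a Vinogradov--Korobov type zero-free region for the Dedekind zeta function $\zeta_K(s)$ (which is known for every number field), upgrading the error in the prime ideal theorem to $\exp(-c(\log t)^{3/5}(\log\log t)^{-1/5})$; redoing the optimization with this sharper bound yields the stated range. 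This is where the bulk of the technical work lies; everything else is the standard Rankin--de Bruijn-type analysis.
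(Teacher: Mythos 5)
The paper itself gives no proof of this lemma: it is quoted verbatim from Thorne (\cite{thorne}, Lemma~5.4), which in turn is the number-field transposition of the classical de Bruijn/Tenenbaum upper bound for smooth numbers. Your proposal reconstructs essentially that standard argument from scratch, and its outline is sound. Rankin's inequality $\Psi_K(x,y)\le x^{\sigma}\prod_{N_K(\mathfrak p)<y}\bigl(1-N_K(\mathfrak p)^{-\sigma}\bigr)^{-1}$ is correct; with $\sigma=1-\alpha/\log y$ and $\alpha=\log u+\log\log u+O(1)$ one gets $x^{\sigma}=x\exp(-u\log u-u\log\log u+O(u))$, while partial summation against Landau's theorem gives $\sum_{N_K(\mathfrak p)<y}N_K(\mathfrak p)^{-\sigma}=e^{\alpha}/\alpha+\log\log y+O(1)+E(\sigma,y)=O(u)+\log\log y+E$, so the Euler factor costs only a power of $\log y$, well within the stated $\log^2 y$ allowance. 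You also correctly isolate the one genuine technical issue: with the classical error term $t\exp(-c\sqrt{\log t})$ the method needs $\alpha\ll\sqrt{\log y}$, i.e. $u\le\exp(c\sqrt{\log y})$, and reaching $u\le\exp\bigl(c(\log y)^{3/5-\epsilon}\bigr)$ requires a Vinogradov--Korobov zero-free region for $\zeta_K$, which is indeed available for every number field (Sokolovskii; for imaginary quadratic $K$ it also follows from $\zeta_K(s)=\zeta(s)L(s,\chi_{D})$ and the corresponding region for a fixed Dirichlet $L$-function). With that input the error is genuinely harmless: once $\alpha\le\tfrac{c}{2}(\log y)^{3/5}(\log\log y)^{-1/5}$, the error integral $\int_{\log 2}^{\log y}\exp\bigl((1-\sigma)w-cw^{3/5}(\log w)^{-1/5}\bigr)\,dw$ has exponent at most $-\tfrac{c}{2}w^{3/5}(\log w)^{-1/5}$ throughout and is therefore $O(1)$, so the stated range follows. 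Two small slips that do not affect the substance: the constraint imposed by the classical error term is $1-\sigma\ll 1/\sqrt{\log y}$ (you wrote $\gg$), and for bounded $u$ the quantity $\log\log u$ must be absorbed into the $O(1)$ (or the range $u\le 3$, say, handled trivially). In short, your route is a correct self-contained proof of the cited result, whereas the paper simply outsources it to Thorne.
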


Let
\begin{equation}\label{ydef}
y:=\left\lfloor c x \frac{\log x}{\log _2 x} \log_3x\right\rfloor,
\end{equation}
and
\[
z_0:=x^{\log _3 x /\left(5 \log _2 x\right)}.
\]
We will show that $Y(x) \gg y - x$ by covering the set $\{\dc{z} \in \mc{O}_K: x < N_{K}(\dc{z}) \leq y\}$ with residue classes modulo prime ideals of norm at most $x$. By \eqref{suffices}, this suffices to prove Theorem \ref{mainthm}. To this end, we introduce one set of prime ideals of $K$, and two sets of prime elements:
\[
\begin{aligned}
& \mathcal{S}:=\left\{\mf{s} \ideal \mc{O}_K \text { prime }: \log ^{20} x< N_{K}(\mf{s}) \leq z_0\right\} \\
& \mathcal{P}:=\{\dc{p} \in \mc{O}_K \text { prime }: x / 2< N_{K}(\dc{p}) \leq x\} \\
& \mathcal{Q}:=\{\dc{q} \in \mc{O}_K \text { prime }: x< N_{K}(\dc{q}) \leq y\} .
\end{aligned}
\]
Correspondingly, we define the following sifted sets of elements of $\mc{O}_K$.
\[
\begin{aligned}
& S(\vec{\dc{a}}):=\left\{\dc{n} \in \mc{O}_K: \dc{n} \not \equiv \dc{a}_s \pmod{\mf{s}}\right.\text{ for all }\left.s \in \mathcal{S}\right\} \\
& T(\vec{b}):=\left\{\dc{n} \in \mc{O}_K: \dc{n} \not \equiv \dc{b}_{\dc{p}}\pmod {\dc{p}}\right.\text{ for all }\left.\dc{p} \in \mathcal{P}\right\}.
\end{aligned}
\]
We reduce the main theorem to the following.
\begin{theorem}\label{sieved}
There exist vectors $\vec{\dc{a}} = (\dc{a}_s \pmod{\mf{s}})_{\mf{s} \in \mc{S}}$ and $\vec{b} = (\dc{b}_{\dc{p}} \pmod{\dc{p}})_{\dc{p} \in \mc{P}}$ such that
\begin{equation*}
|Q \cap S(\vec{\dc{a}}) \cap T(\vec{b})| \leq \frac{x}{5 \log x}.
\end{equation*}
\end{theorem}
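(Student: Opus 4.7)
The plan is to follow the two-stage sieve architecture of \cite{fgkmt}. The small primes in $\mc{S}$ produce a random initial sieve through the choice of $\vec{\dc{a}}$; the medium primes in $\mc{P}$ then cover nearly all remaining primes in $\mc{Q}$ through a deterministic $\vec{b}$. Corollary \ref{immediate} supplies an explicit probability distribution on $(\vec{\dc{a}},\vec{b})$ whose first- and second-moment behaviour is tuned to the Maynard--Tao $k$-tuple weight construction, and Theorem \ref{packing-quant-cor} (the hypergraph covering theorem) converts this probabilistic data into the required deterministic choice.

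First, I would draw $\vec{\dc{a}}$ according to the distribution of Corollary \ref{immediate}. After conditioning on a high-probability event, both $|\mc{P}\cap S(\vec{\dc{a}})|$ and $|\mc{Q}\cap S(\vec{\dc{a}})|$ are bounded above by constant multiples of their expectations. Independently, attach to each $\dc{p}\in\mc{P}\cap S(\vec{\dc{a}})$ the sieve-weight probability distribution $\nu_{\dc{p}}$ on residues modulo $\dc{p}$ furnished by Corollary \ref{immediate}. The Maynard--Tao weight guarantees that for a typical $\dc{q}\in\mc{Q}\cap S(\vec{\dc{a}})$ the sum $\sum_{\dc{p}\in\mc{P}\cap S(\vec{\dc{a}})}\nu_{\dc{p}}(\dc{q}\bmod\dc{p})$ exceeds a large absolute constant, so that the random coverage $\vec{b}\sim\bigotimes_{\dc{p}}\nu_{\dc{p}}$ hits $\dc{q}$ with probability close to one.

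Second, I would invoke Theorem \ref{packing-quant-cor}. Its hypotheses are (i) this first-moment lower bound on the expected coverage of each $\dc{q}$, (ii) a second-moment upper bound controlling joint coverage of pairs $(\dc{q},\dc{q}')$, and (iii) an upper bound on $|\mc{Q}\cap S(\vec{\dc{a}})|$, all of which are packaged into Corollary \ref{immediate}. The theorem then yields
\begin{equation*}
\E\,|\mc{Q}\cap S(\vec{\dc{a}})\cap T(\vec{b})| \leq \frac{x}{10 \log x},
\end{equation*}
with the implicit constants tunable by the sieve parameter appearing in Theorem \ref{prop7.14}. A Markov/pigeonhole argument over the joint distribution of $(\vec{\dc{a}},\vec{b})$ then extracts a deterministic pair satisfying $|\mc{Q}\cap S(\vec{\dc{a}})\cap T(\vec{b})|\leq x/(5\log x)$.

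The main obstacle is verifying hypothesis (ii): the second-moment bound requires control over the joint probability that two distinct medium primes $\dc{p},\dc{p}'$ both cover the same $\dc{q}$, which reduces to the distribution of norm-primes in joint congruence conditions modulo $\dc{p}\dc{p}'$. This pushes past the square-root barrier in the modulus and is exactly the equidistribution input Lemma \ref{page-bv} supplies (at level $x^{1/3-\eps}$ with power-saving error). The unit-group subtleties recorded in Section \ref{section-ray-hecke} and the modulus restrictions adopted in Section \ref{section-notation} are what allow the ray-class-group quantity $h(\mf{q})$ to behave uniformly enough for these estimates to go through.
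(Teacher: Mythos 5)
Your overall architecture is the paper's: a random $\vec{\dc{a}}$, the sieve-weight distributions $\nu_{\dc{p}}$ of Section \ref{section-probability-weights} packaged through Corollary \ref{immediate}, and an application of the hypergraph covering theorem, Corollary \ref{packing-quant-cor}, to extract a deterministic $\vec{b}$. But your account of what must be verified to invoke that theorem contains a genuine error. Its third hypothesis (\emph{small codegrees}) bounds $\sum_{\dc{p}\in\PP}\PR(\dc{q}_1,\dc{q}_2\in\vec{\mathbf{e}}_{\dc{p}})$ for two \emph{distinct} $\dc{q}_1,\dc{q}_2\in\QQ$ covered by the \emph{same} $\dc{p}$ --- not, as you state, joint coverage of a single $\dc{q}$ by two distinct medium primes --- and it does not reduce to equidistribution of primes in residue classes modulo $\dc{p}\dc{p}'$. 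The primes of $\mc{P}$ have norm $\asymp x$, so $\dc{p}\dc{p}'$ has norm $\asymp x^{2}$; Lemma \ref{page-bv}, whose level is $x^{1/3-\eps}$, could not possibly control such moduli, and no such input is needed. The paper verifies \eqref{small-codegree-cor} elementarily: if $\dc{q}_1,\dc{q}_2\in\mathbf{e}_{\dc{p}}(\vec{\dc{a}})$ then $\dc{p}\mid\dc{q}_1-\dc{q}_2$, and since $\dc{q}_1-\dc{q}_2$ is a nonzero element of norm $O(x\log x)$, at most one $\dc{p}\in\PP$ can divide it; combined with the sparsity bound $\PR(\dc{q}\in\mathbf{e}_{\dc{p}}(\vec{\dc{a}})\mid\vec{\mathbf{a}}=\vec{\dc{a}})\le x^{-1/2-1/10}$ (itself a hypothesis you never check; it comes from $w^*\ll x^{o(1)}$, the size of the normalizing sum, and $X_{\dc{p}}(\vec{\dc{a}})\approx\sigma^k$, as in Theorem \ref{sieve-primes-2}), this gives the codegree condition. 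Lemma \ref{page-bv} enters much earlier, in Proposition \ref{prop7.9} and Theorem \ref{thm8.6}, for single moduli of norm at most $x^{\theta}$ built from sieve divisors; the two-prime second moment inside Lemma \ref{smc-2} is handled by the independence of $\tilde{\mathbf{n}}_{\dc{p}_1}$ and $\tilde{\mathbf{n}}_{\dc{p}_2}$ together with Lemma \ref{lemma8.4}, not by any bilinear equidistribution.

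Your quantitative mechanism is also off. The coverage sum for a typical $\dc{q}$ is $C=ux/(2\sigma y)\asymp 1/c$, a \emph{bounded} constant, and Corollary \ref{packing-quant-cor} in fact requires $\frac{5}{4}\log 5\le C\ll 1$ (arranged by taking $c$ small); $C$ cannot be made large in this construction. Consequently an independent random choice $\vec{b}\sim\bigotimes_{\dc{p}}\nu_{\dc{p}}$ misses each typical $\dc{q}$ with probability about $e^{-C}$, a constant, leaving on the order of $c\,e^{-C}\,x\log_2 x/\log x$ survivors --- far more than $x/(5\log x)$. The covering theorem exists precisely to beat independent selection: applied with $m=\lfloor\log_3 x/\log 5\rfloor$ to $\QQ'=\QQ\cap S(\vec{\dc{a}})$ for a good $\vec{\dc{a}}$ satisfying \eqref{treat}, it leaves $\sim 5^{-m}\,\#(\QQ\cap S(\vec{\dc{a}}))\ll c\,x/\log x$ survivors with probability $1-o(1)$ (a high-probability statement about modified random sets $\vec{\mathbf{e}}'_{\dc{p}}$, not the expectation bound you attribute to it), after which one chooses $c$ small and fixes a realization $\dc{b}_{\dc{p}}=\dc{n}'_{\dc{p}}$. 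So the final step does go through along your route, but only after the hypotheses are verified as above and the parameter $m$ is exploited; the ``hit with probability close to one'' heuristic is not a substitute and would not deliver the stated bound.
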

\begin{proof}[Proof of Theorem \ref{mainthm} assuming Theorem \ref{sieved}]
We first set 
\[
\dc{a}_{\mf{p}}=0 \quad\left(N_{K}(\mf{p}) \leq \log ^{20} x, z_0 <N_{K}(\mf{p}) \leq x / 4\right) .
\]
We let $\vec{\dc{a}}$ and $\vec{b}$ be as in Theorem \ref{sieved}. Let
\[
V=\left\{\dc{n} \in \mc{O}_K: \dc{n} \not \equiv 0(\pmod{\mf{p}}) \text { for all } \mf{p}, N_{K}(\mf{p}) \leq \log ^{20} x \text { and } z_0 < N_{K}(\mf{p}) \leq x / 4\right\}
\]
Consider the set
\[
\mathcal{U}:= \{\dc{z} \in \mc{O}_K: N_{K}(\dc{z}) \in (x, y]\} \cap S(\vec{\dc{a}}) \cap T(\vec{b}) \cap V.
\]
Any element of $\mc{U}$ is either composed of prime ideals of norm at most $z_0$ (i.e., is ``$z_0$-smooth), or is divisible by a prime ideal of norm larger than $x/4$. Since all prime factors of elements of $\mc{U}$ have norm larger than $\log^{20}(x)$ and since all elements of $\mc{U}$ have norm $O(x\log x)$, it follows that the elements of $\mc{U}$ are either $z$-smooth or prime. In other words, $\mc{U}$ differs from $Q \cap S(\vec{\dc{a}}) \cap T(\vec{b})$ by a set of $z_0$-smooth numbers. However, Lemma \ref{lemma5.4} implies that the set of $z_0$-smooth numbers in $\mc{O}_K$ with norm at most $y$ is $O(x/\log^2(x))$. Consequently, we find that
\begin{equation*}
|\mc{U}| \leq (1+o(1))\frac{x}{5\log x}.	
\end{equation*}
We cover the remaining residue classes by matching them to the primes $\mf{p}$ with $x/4 < N_{K}(\mf{p}) \leq x/2$. (There are enough such prime ideals by Theorem \ref{landauprimeideal}.)
\end{proof}

\section{Sieve weights}\label{section-sieve-weights}
In this section, we develop a number field variant of the arguments in \cite{denseclusters}, which yields Proposition \ref{prop7.14}. We then use this to create good sieve weights analogous to \cite{fgkmt} in Theorem \ref{thm8.6}.

Let $N \geq 100$, and $\theta < 1$. Assume that the parameters $s,R,D,z,k$ satisfy

\begin{align*}
&\frac{\log _2 N}{2} \leq s \leq 2 \log _2 N, \quad N^{\frac{\theta}{4}-\frac{2}{s}} \leq R \leq N^{\frac{\theta}{4}-\frac{1}{s}}, \quad D=R^{1 / s}, \\
& 2N \leq \tilde{N} \leq N (\log N)^3, \quad  (\log N)^{9999k^2} \leq z \leq (\log N)^{99999k^2}.
\end{align*}
Assume further that $z$ is larger than any prime dividing $\text{disc}(K)$.
Define
\begin{small}
\begin{equation}\label{mcddef}
	\mathscr{D}:=\left\{\vec{\mf{d}} \in \{\mf{a} \ideal \mc{O}_K\}^k: \rad(N_{K}(\mf{d}_1 \cdots \mf{d}_k)) \leq R, \mu^2\left(\mf{d}_1 \cdots \mf{d}_k\right)=1, P^{-}(N_K\left(\mf{d}_1 \cdots \mf{d}_k\right))>z\right\}.
\end{equation}
\end{small}
Let $\mf{B}$ be the ideal (which is either $(1)$ or prime) guaranteed by Lemma \ref{page-bv} with $x = \tilde{N}$. We require the fundamental lemma of sieve theory, which we will use in the following form:
\begin{theorem}{\cite{opera}{Proposition 6.7}}\label{thm3.6}
For any pair $(z,D)$ of positive integers with $2 \leq z \leq D^{1/2}$, there are sieves $\lambda^+$ and $\lambda^-$ satisfying
\begin{equation}\label{lambdaplus}
\lambda_1^{+}=1, \quad \sum_{d \mid m} \lambda_d^{+} \geqslant 0 \quad(m>1)	
\end{equation}
and
\begin{equation}\label{lambdaminus}
\lambda_1^{-}=1, \quad \sum_{d \mid m} \lambda_d^{+} \leq 0 \quad(m>1)	,
\end{equation}
satisfying $|\lambda^{\pm}| \leq 1$ for all $d$, with support in $\ms{D}(z,D) := \{d \in \N: \mu^2(d) = 1, P^+(d) \leq z, d \leq D\}$. Furthermore, for any multiplicative function $g$, if there exist constants $\kappa \geq 0$ and $B > 0$ such that
\begin{equation}\label{Omega}
	\prod_{y \leq p \leq w}(1-g(p))^{-1} \leq\left(\frac{\log w}{\log y}\right)^\kappa \exp \left(\frac{B}{\log y}\right) \quad(2 \leq y \leq w \leq z),
\end{equation}
then, with $s = \max(100, \frac{\log D}{\log z})$, we have that
\begin{equation}\label{flparta}
\sum_d \lambda_d^{ \pm} g(d)=\left(1+O\left(e^{-s \log s+s \log _3 s+O_{\kappa, B}(s)}\right)\right) \prod_{p \leq z}(1-g(p)).	
\end{equation}
\end{theorem}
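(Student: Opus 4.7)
The plan is to prove this via the Rosser--Iwaniec (``beta'') combinatorial sieve construction, which is the standard route in \cite{opera}. The argument has three ingredients: first, an explicit combinatorial construction of the sieve coefficients $\lambda^\pm$ which forces the one-sided inequalities \eqref{lambdaplus} and \eqref{lambdaminus}; second, a decomposition of the sieve sum into a ``full'' Eratosthenes--M\"obius product plus a truncation remainder; and third, a Rankin-type tail estimate which, combined with the dimension hypothesis \eqref{Omega}, produces the claimed superpolynomial saving.

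For the construction, order the primes $p\le z$ as $p_1>p_2>\cdots$ and, following Rosser--Iwaniec, declare a squarefree $d=p_{i_1}\cdots p_{i_r}$ (with $p_{i_1}>\cdots>p_{i_r}$) to be $+$-\emph{admissible} if every odd partial product $p_{i_1}\cdots p_{i_{2j+1}}$ is less than $D$, and $-$-admissible if the same holds for every even partial product. Set $\lambda^+_d=\mu(d)$ on $+$-admissible $d$ and $0$ otherwise, and likewise for $\lambda^-$. The support condition $d\in\mathscr{D}(z,D)$ and the pointwise bound $|\lambda^\pm_d|\le 1$ are immediate. The sign conditions \eqref{lambdaplus} and \eqref{lambdaminus} then follow from a pairing argument in the style of Brun: each divisor $d\mid m$ that fails admissibility is matched with a unique admissible divisor $d'$ differing from $d$ by exactly one prime factor, and the resulting cancellation isolates the correct sign of the truncated M\"obius sum.

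For the main term and tail, write
\begin{equation*}
\sum_d \lambda^\pm_d\, g(d) \;=\; \sum_{\substack{d \text{ squarefree}\\ P^+(d)\le z}} \mu(d)\,g(d) \;-\; R^\pm \;=\; \prod_{p\le z}\bigl(1-g(p)\bigr) \;-\; R^\pm,
\end{equation*}
where $R^\pm$ collects those squarefree $d$ with $P^+(d)\le z$ failing the admissibility condition. By construction, each such $d$ admits a designated partial product exceeding $D$, so Rankin's trick with parameter $\sigma\in(0,1)$ yields
\begin{equation*}
|R^\pm| \;\le\; D^{-\sigma}\sum_{P^+(d)\le z}g(d)\,d^\sigma \;=\; D^{-\sigma}\prod_{p\le z}\bigl(1+p^\sigma g(p)\bigr).
\end{equation*}
The hypothesis \eqref{Omega}, together with partial summation, controls the Euler product on the right in terms of $\prod_{p\le z}(1-g(p))$ with an extra factor depending on $\kappa$, $B$ and $\sigma$. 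Optimising $\sigma$ against $s=\log D/\log z$ then yields a saving of shape $e^{-s\log s + s\log_3 s + O_{\kappa,B}(s)}$ on the main term, which is precisely the error in \eqref{flparta}.

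The main obstacle is extracting the sharp exponent. A naive Rankin optimisation delivers only $e^{-s\log s + O(s)}$ and does not cleanly separate the dependence on $\kappa$ and $B$; to reach the stated form, one replaces the single-parameter optimisation with a Buchstab-type functional recursion on the sieve function $V(z):=\prod_{p\le z}(1-g(p))$, which iterates the dimension condition across dyadic ranges of primes and pushes the $(\kappa,B)$-dependence into a lower-order $O_{\kappa,B}(s)$ term in the exponent. This bookkeeping is notationally heavy but conceptually standard, and is exactly what is carried out in full generality in \cite{opera}; since this is a textbook input we simply quote it.
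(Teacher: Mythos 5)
Your proposal is correct in outline and ultimately takes the same route as the paper: the paper gives no proof of this statement at all, quoting it directly from \cite{opera} (the Fundamental Lemma, stated there as the cited proposition), and you likewise sketch the standard Rosser--Iwaniec beta-sieve construction with a Rankin-type tail estimate before deferring to the same textbook for the sharp exponent. The only caveat is that your intermediate justifications are loose where you do not defer (the positivity of $\sum_{d\mid m}\lambda_d^{\pm}$ comes from a Buchstab-type grouping of divisors by the first failed truncation condition rather than a strict one-prime pairing, and the naive Rankin optimisation alone does not yield \eqref{flparta}), but since the final appeal is to \cite{opera}, exactly as in the paper, this does not constitute a gap.
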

Let $(\dc{\dc{a}_1n + \dc{b}_1}, \dots \dc{a}_k n + \dc{b}_k)$ be a tuple of linear forms. Let
\begin{equation}\label{edef}
\begin{split}
\dc{E}&=\dc{E}(\vec{\dc{a}}, \vec{\dc{b}})=\prod_{i=1}^k \dc{a}_i \prod_{i<j}\left(\dc{a}_i \dc{b}_j-\dc{a}_j \dc{b}_i\right), \\
\mathscr{E}&=\mathscr{E}(\vec{\dc{a}}, \vec{\dc{b}})=\left\{\vec{\mf{d}} \in \{\mf{a} \ideal \mc{O}_K\}^k:\left(\mf{d}_1 \cdots \mf{d}_k, \dc{E} \mf{B}\right)=1\right\}.
\end{split}
\end{equation}
Let
\begin{equation}\label{rhodef}
	\rho(\mf{d})=\#\left\{\dc{n} \pmod{\mf{d}}:\left(\dc{\dc{a}_1 n+\dc{b}_1}\right) \cdots\left(\dc{a}_k \dc{n}+\dc{b}_k\right) \equiv 0 \pmod{\mf{d}}\right\}.
\end{equation}
When $\rho(\mf{d}) < N_{K}(\mf{d})$, we say that the collection $(\dc{a}_i \dc{n} + \dc{b}_i)_{i=1}^k$ is \textit{admissible}. We assume that $(\dc{a}_i \dc{n} + \dc{b}_i)_{i=1}^k$ is indeed admissible. Define \[ H = N_K(\mf{B}) \cdot \disc(K).\]
Let
\begin{equation}\label{vdef}
	V=\prod_{\substack{\rad(N_{K}(\mf{p})) \leq z \\ \mf{p} \nmid H}}\left(1-\frac{\rho(\mf{p})}{N_{K}(\mf{p})}\right).
\end{equation}
Let $\mu^+$ denote an upper bound sieve satisfying \eqref{lambdaplus} with respect to the parameters $z, D$ (in particular, we have $|\mu^+(n)| \leq 1$ for all $n \in \Z $). Let $\lambda: \{\mf{a} \ideal \mc{O}_K\}^k \to \R$ be a function supported on $\ms{D}$ satisfying 
\begin{equation}\label{eq7.13-gaps}
|\lambda(\vec{\mf{d}})| \leq 1.
\end{equation}
 Then, define
\begin{equation}\label{wdef}
\begin{split}
w(\dc{n})&=\sum_{\substack{\mf{t} \mid \left(\dc{a}_1 \dc{n}+\dc{b}_1\right) \cdots\left(\dc{a}_k \dc{n}+\dc{b}_k\right) \\ P^+(N_K(\mf{t})) \leq z, (\mf{t}, H) = 1}} \mu^{+}(\rad(N_{K}(\mf{t})))\cdot \frac{\mu(\mf{t})}{\mu(\rad(N_{K}(\mf{t})))} \\
&\qquad \cdot \Bigg(\sum_{\substack{\vec{\mf{d}} \in \mathscr{D} \cap \mathscr{E} \\ \forall: \mf{d}_j \mid \dc{a}_j \dc{n}+\dc{b}_j}} \lambda(\vec{\mf{d}})\Bigg)^2	.
\end{split}
\end{equation}
\begin{lemma}\label{rp}
Suppose that $\mf{m}_j \mid \dc{a}_j \dc{n}+\dc{b}_j$ for every $j$ and $\mathbf{m} \in \mathscr{E}$. Then $\mf{m}_1, \ldots, \mf{m}_k$ are pairwise relatively prime, and $\left(\mf{m}_j, \dc{a}_j\right)=1$ for all $j$.
\end{lemma}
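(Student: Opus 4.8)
The plan is to simply unwind the definition of $\mathscr{E}$ and run the familiar resultant argument at the level of ideals. Recall that $\mathbf{m}\in\mathscr{E}$ means exactly that the ideal $\mf{m}_1\cdots\mf{m}_k$ is coprime to $(\dc{E})\mf{B}$, where $\dc{E}=\prod_{i=1}^{k}\dc{a}_i\prod_{i<j}(\dc{a}_i\dc{b}_j-\dc{a}_j\dc{b}_i)$; in particular $\mf{m}_1\cdots\mf{m}_k$ is coprime to the principal ideal $(\dc{E})$, and hence so is each individual factor $\mf{m}_j$, since $\mf{m}_j\mid\mf{m}_1\cdots\mf{m}_k$.

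From here the two assertions are immediate. For $(\mf{m}_j,\dc{a}_j)=1$: the element $\dc{a}_j$ appears as a factor of $\dc{E}$, so $(\dc{a}_j)\mid(\dc{E})$ as ideals, whence any ideal coprime to $(\dc{E})$ is coprime to $(\dc{a}_j)$; apply this to $\mf{m}_j$. For pairwise coprimality I would argue by contradiction at the level of prime ideals: if a prime $\mf{p}$ divided both $\mf{m}_i$ and $\mf{m}_j$ for some $i<j$, then $\mf{p}\mid(\dc{a}_i\dc{n}+\dc{b}_i)$ and $\mf{p}\mid(\dc{a}_j\dc{n}+\dc{b}_j)$, so $\mf{p}$ divides the $\mc{O}_K$-linear combination
\[
\dc{a}_j(\dc{a}_i\dc{n}+\dc{b}_i)-\dc{a}_i(\dc{a}_j\dc{n}+\dc{b}_j)=\dc{a}_j\dc{b}_i-\dc{a}_i\dc{b}_j,
\]
i.e.\ $\mf{p}\mid(\dc{a}_i\dc{b}_j-\dc{a}_j\dc{b}_i)$. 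But $\dc{a}_i\dc{b}_j-\dc{a}_j\dc{b}_i$ divides $\dc{E}$, so $\mf{p}\mid(\dc{E})$, contradicting that $\mf{m}_i$, and hence its divisor $\mf{p}$, is coprime to $(\dc{E})$.

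There is no real obstacle here; the only points that deserve a word of care are purely formal. One works with ideal divisibility throughout, using that for $\dc{x},\dc{y}\in\mc{O}_K$ one has $\dc{x}\mid\dc{y}\iff(\dc{y})\subseteq(\dc{x})\iff(\dc{x})\mid(\dc{y})$, and that two integral ideals are coprime precisely when no prime ideal divides both. One also uses implicitly that $\dc{E}\neq 0$ (so that coprimality to $(\dc{E})$ is a meaningful condition), which is guaranteed by the standing assumption that $(\dc{a}_i\dc{n}+\dc{b}_i)_{i=1}^k$ is an admissible tuple of genuine linear forms: each $\dc{a}_i\neq 0$ by definition, and $\dc{a}_i\dc{b}_j\neq\dc{a}_j\dc{b}_i$ for $i\neq j$ since otherwise two of the forms would be proportional over $K$.
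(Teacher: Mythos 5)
Your proof is correct, and for the claim $(\mf{m}_j,\dc{a}_j)=1$ it actually takes a cleaner route than the paper. The paper invokes admissibility to first deduce $(\dc{a}_j,\dc{b}_j)=1$, then argues that a common prime of $\mf{m}_j$ and $\dc{a}_j$ would force $\mf{p}\mid\dc{b}_j$, a contradiction. You bypass this entirely by observing that $\dc{a}_j$ is already a factor of $\dc{E}$, so coprimality of $\mf{m}_j$ to $(\dc{E})\mf{B}$ immediately gives $(\mf{m}_j,\dc{a}_j)=1$; this uses nothing beyond the definition of $\mathscr{E}$ and is arguably the more natural argument given how $\dc{E}$ is built. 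The pairwise coprimality argument via the resultant $\dc{a}_i\dc{b}_j-\dc{a}_j\dc{b}_i$ is the same as the paper's.

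One small inaccuracy in your closing remark: admissibility (i.e.\ $\rho(\mf{d})<N_K(\mf{d})$ for all $\mf{d}$) does \emph{not} rule out two forms being proportional, so it does not by itself guarantee $\dc{a}_i\dc{b}_j\neq\dc{a}_j\dc{b}_i$; e.g.\ the forms $\dc{n}+1$ and $2\dc{n}+2$ are proportional yet can sit inside an admissible tuple. The correct observation is that if $\dc{E}=0$ then $(\dc{E})\mf{B}=(0)$, and the only integral ideal coprime to the zero ideal is $(1)$, so $\mathscr{E}$ reduces to the single tuple with all $\mf{d}_j=(1)$ and the lemma is vacuous. Either way there is no gap in the argument, but the justification you gave for $\dc{E}\neq 0$ is not the right one.
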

\begin{proof}
If $\mf{p} \mid \mf{m}_i$ and $\mf{p} \mid \mf{m}_j$ then $\mf{p}$ divides $\dc{a}_i\left(\dc{a}_j \dc{n}+\dc{b}_j\right)-\dc{a}_j\left(\dc{a}_i \dc{n}+\dc{b}_i\right)=\dc{a}_i \dc{b}_j-\dc{a}_j \dc{b}_i$, and so $\mf{p} \mid E$. This proves the first claim. Since $\left(\dc{a}_1 \dc{n}+\dc{b}_1, \ldots, \dc{a}_k n+\dc{b}_k\right)$ is an admissible set, $\left(\dc{a}_j, \dc{b}_j\right)=1$ for all $j$. Hence, if $\mf{p}$ is prime, $\mf{p}\left|\mf{m}_j\right|\left(\dc{a}_j \dc{n}+\dc{b}_j\right)$ and $\mf{p} \mid \dc{a}_j$ then $\mf{p} \mid \dc{b}_j$, a contradiction. Thus, $\left(\dc{a}_j, \mf{m}_j\right)=1$.
\end{proof}

\begin{proposition}\label{prop7.8}
Let $\mu^{+}$be an upper bound sieve function from Theorem \ref{thm3.6} with parameters $z, D$. Let $\lambda(\vec{\mf{d}})$ satisfy $|\lambda(\vec{\mf{d}})| \leq 1$ and be supported on $\mathscr{D}$. For $\vec{\mf{r}} \in \mathscr{D}$ define
\begin{equation}\label{xidef}
\xi(\vec{\mf{r}})=\sum_{\vec{\mf{d}}} \frac{\lambda\left(\mf{r}_1 \mf{d}_1, \ldots, \mf{r}_k \mf{d}_k\right)}{N_{K}(\mf{d}_1) \cdots N_{K}(\mf{d}_k)} .
\end{equation}

Let $\left(\dc{a}_1 \dc{n}+\dc{b}_1, \ldots, \dc{a}_k \dc{n}+\dc{b}_k\right)$ be an admissible set of linear forms, with $k \leq (\log N)^{1/9}$ and $k$ larger than a suitable (absolute) constant, and such that
\begin{equation}\label{absizes}
1 \leq N_{K}(\dc{a}_i)\leq N^2, \quad N_{K}(\dc{b}_i) \leq N^2 \quad(1 \leq i \leq k) .
\end{equation}

Define $\dc{E}, \mathscr{E}$ by \eqref{edef}, $V$ by \eqref{vdef} and $w(\dc{n})$ by \eqref{wdef}. Then
$$
\sum_{N<N_{K}(\dc{n}) \leq 2 N} w(\dc{n})=V N \sum_{\vec{\mf{r}} \in \mathscr{D}} \frac{\xi(\vec{\mf{r}})^2}{N_{K}(\mf{r}_1) \cdots N_{K}(\mf{r}_k)}+O\left(\frac{N}{(\log N)^{9990 k^2}}\right)
$$
\end{proposition}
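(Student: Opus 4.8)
The plan is to expand the square in \eqref{wdef}, interchange the order of summation so that the sum over $\dc n$ is innermost, and evaluate that inner sum by the Chinese Remainder Theorem together with Lemma~\ref{elements}. Writing the square as a double sum over $\vec{\mf d},\vec{\mf e}\in\mathscr D\cap\mathscr E$, the quantity $\sum_{N<N_K(\dc n)\le 2N}w(\dc n)$ becomes a triple sum over $\mf t,\vec{\mf d},\vec{\mf e}$ of the coefficient
\[
\mu^{+}(\rad N_K(\mf t))\,\frac{\mu(\mf t)}{\mu(\rad N_K(\mf t))}\,\lambda(\vec{\mf d})\,\lambda(\vec{\mf e})
\]
(of absolute value $\le 1$, and vanishing unless $\mf t$ is squarefree) times the number of $\dc n$ with $N<N_K(\dc n)\le 2N$ satisfying $\mf t\mid\prod_i(\dc a_i\dc n+\dc b_i)$ and $\mf d_j,\mf e_j\mid\dc a_j\dc n+\dc b_j$ for all $j$. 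The decisive point is that every prime ideal dividing $\mf t$ lies above a rational prime $\le z$, while since $\vec{\mf d},\vec{\mf e}\in\mathscr D$ every prime ideal dividing $\mf d_1\cdots\mf d_k\mf e_1\cdots\mf e_k$ lies above a rational prime $>z$; hence $\mf t$ is coprime to all the $\mf d_j$ and $\mf e_j$. By Lemma~\ref{rp} the $\mf d_j$ are pairwise coprime with $(\dc a_j,\mf d_j)=1$, and likewise for the $\mf e_j$, so the moduli $\mf t,\mathrm{lcm}(\mf d_1,\mf e_1),\dots,\mathrm{lcm}(\mf d_k,\mf e_k)$ are pairwise coprime and the congruence system on $\dc n$ has exactly $\rho(\mf t)$ solutions modulo $\mf q:=\mf t\prod_j\mathrm{lcm}(\mf d_j,\mf e_j)$. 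Applying Lemma~\ref{elements} to $N_K(\dc n)\le 2N$ and to $N_K(\dc n)\le N$ and subtracting, the inner count is
\[
\rho(\mf t)\left(\frac{N}{N_K(\mf t)\prod_j N_K(\mathrm{lcm}(\mf d_j,\mf e_j))}+O\!\left(1+\sqrt{N/N_K(\mf q)}\right)\right).
\]

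The coprimality just recorded makes the main term factor as $N$ times $\big(\sum_{\mf t}\mu^{+}(\rad N_K(\mf t))\tfrac{\mu(\mf t)}{\mu(\rad N_K(\mf t))}\tfrac{\rho(\mf t)}{N_K(\mf t)}\big)$ times $\big(\sum_{\vec{\mf d},\vec{\mf e}}\lambda(\vec{\mf d})\lambda(\vec{\mf e})\prod_j N_K(\mathrm{lcm}(\mf d_j,\mf e_j))^{-1}\big)$. For the $(\vec{\mf d},\vec{\mf e})$-sum I carry out the usual Maynard-type manipulation: use $N_K(\mathrm{lcm}(\mf d_j,\mf e_j))^{-1}=N_K(\gcd(\mf d_j,\mf e_j))N_K(\mf d_j)^{-1}N_K(\mf e_j)^{-1}$ and the ideal identity $N_K(\mf n)=\sum_{\mf r\mid\mf n}\varphi(\mf r)$, introduce $\mf r_j$ as a common divisor of $\mf d_j$ and $\mf e_j$, and reindex $\mf d_j=\mf r_j\mf d_j'$ to recognize $\xi(\vec{\mf r})$ from \eqref{xidef}; this yields $\sum_{\vec{\mf r}}\big(\prod_j\varphi(\mf r_j)/N_K(\mf r_j)^2\big)\xi(\vec{\mf r})^2$. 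Since every prime ideal dividing $\mf r_j$ has norm $>z$, we may replace $\varphi(\mf r_j)/N_K(\mf r_j)=\prod_{\mf p\mid\mf r_j}(1-N_K(\mf p)^{-1})$ by $1$ at a multiplicative cost $1+O(\omega(\mf r_j)/z)$ with $\omega(\mf r_j)\ll\log R/\log z$, producing $\sum_{\vec{\mf r}\in\mathscr D}\xi(\vec{\mf r})^2/\prod_j N_K(\mf r_j)$ up to a relative error $O(\log R/z)$.

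The sum over $\mf t$ is where Theorem~\ref{thm3.6} enters, and this is the heart of the matter. Grouping the $\mf t$ by the value $d=\rad N_K(\mf t)$, using that $\rho$ is multiplicative on squarefree ideals and that $\rho(\mf p)\le k$ for every prime $\mf p$ (each $(\dc a_i,\dc b_i)=1$ by admissibility, so each linear factor has at most one root modulo $\mf p$), one verifies the identity
\[
\sum_{\mf t:\,\rad N_K(\mf t)=d}\frac{\mu(\mf t)}{\mu(d)}\,\frac{\rho(\mf t)}{N_K(\mf t)}=\prod_{p\mid d}\left(1-\prod_{\mf p\mid p}\Big(1-\frac{\rho(\mf p)}{N_K(\mf p)}\Big)\right)=:g(d).
\]
Extending $g$ to a multiplicative function of rational integers with $g(p):=0$ for $p\mid H$, the $\mf t$-sum becomes $\sum_{d\in\mathscr D(z,D)}\mu^{+}(d)g(d)$, and $\prod_{p\le z}(1-g(p))=\prod_{\rad N_K(\mf p)\le z,\,\mf p\nmid H}(1-\rho(\mf p)/N_K(\mf p))=V$. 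By Theorem~\ref{cheb} and partial summation $\sum_{p\le x}g(p)=k\log_2 x+O_k(1)$, so $g$ has sieve dimension $\kappa=k$ and satisfies \eqref{Omega} with a suitable $B$; Theorem~\ref{thm3.6}, with $s=\max(100,\log D/\log z)$, then gives that the $\mf t$-sum equals $V\big(1+O(\exp(-s\log s+s\log_3 s+O_k(s)))\big)$. The parameter ranges are essential here: $\log z\asymp k^2\log_2 N$, so $s=\log D/\log z\asymp\theta\log N/(k^2(\log_2 N)^2)$ is $\ge100$ with $\log s\asymp\log_2 N$, and since $k\le(\log N)^{1/9}$ one checks $s\log s$ dominates both the $\kappa$-/$B$-dependent term $O_k(s)$ and $9991k^2\log_2 N$, making the relative error $\ll(\log N)^{-9991k^2}$.

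It remains to control the error terms. Since $\rad N_K(\mf t)\le D$ and $K$ is quadratic, $N_K(\mf t)\le D^2=N^{o(1)}$, whence $\rho(\mf t)\le N_K(\mf t)=N^{o(1)}$ and there are $N^{o(1)}$ ideals $\mf t$ in play; since $\rad N_K(\mf d_1\cdots\mf d_k)\le R$ with all prime ideals of norm $>z$, the number of prime ideals dividing $\mf d_1\cdots\mf d_k$ is $\ll\log R/\log z$, so $\#\mathscr D\ll N^{\theta/4+o(1)}$ and similarly for $\vec{\mf e}$. As all coefficients are $\le 1$ and $\sqrt{N/N_K(\mf q)}\le\sqrt N$, the contribution of the $O(1+\sqrt{N/N_K(\mf q)})$ error is $\ll N^{1/2+\theta/2+o(1)}$. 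Combining this with the two relative errors from the main term (each $\ll N(\log N)^{-9991k^2}$, using the crude bound $\sum_{\vec{\mf r}}\xi(\vec{\mf r})^2/\prod_j N_K(\mf r_j)\ll(\log R)^{O(k)}$), and using $\theta<1$ together with $(\log N)^{9990k^2}=N^{o(1)}$ to absorb $N^{1/2+\theta/2+o(1)}$ into $O(N/(\log N)^{9990k^2})$, gives the claimed estimate. The step deserving the most care is precisely the conversion of the ideal sum over $\mf t$ into the rational sieve sum and the simultaneous closing of the fundamental-lemma error and all accompanying parameter inequalities; this is exactly what forces the hypotheses on $s,R,D,z,k$, and in particular $z$ being a power of $\log_2 N$ with exponent of order $k^2$.
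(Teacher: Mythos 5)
Your proposal follows essentially the same route as the paper: expand the square, interchange the order of summation, use Lemma~\ref{rp} plus coprimality of $\mf t$ with the $\mf d_j,\mf e_j$ and Lemma~\ref{elements} to evaluate the inner count, factor the main term into a $\mf t$-sum and a $(\vec{\mf d},\vec{\mf e})$-sum, handle the former with Theorem~\ref{thm3.6} (after grouping by $\rad N_K(\mf t)$ and identifying the relevant multiplicative $g$ on $\Z$), handle the latter with the $N_K([\mf d,\mf e])^{-1}=N_K(\mf d\mf e)^{-1}\sum_{\mf r\mid(\mf d,\mf e)}\varphi(\mf r)$ identity to produce $\xi$, and replace $\varphi(\mf r_j)/N_K(\mf r_j)$ by $1$ at negligible cost. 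Your closed form $g(d)=\prod_{p\mid d}\bigl(1-\prod_{\mf p\mid p}(1-\rho(\mf p)/N_K(\mf p))\bigr)$ is the same function the paper writes out case-by-case, and your sieve-dimension bookkeeping ($\kappa=k$ via Theorem~\ref{cheb}) is actually a touch sharper than the paper's crude $\kappa=3k$, but both satisfy \eqref{Omega}; your crude $\sqrt{N/N_K(\mf q)}\le\sqrt N$ in the error term, while looser than the paper's $N^\theta+N^{1-1/(2s)}$, still closes since $\theta<1$ is fixed.

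One step you pass over silently, and which the paper spends a paragraph on, is removing the constraint $\vec{\mf d},\vec{\mf e}\in\mathscr E$ before the M\"obius reindexing: after setting $\mf d_j=\mf r_j\mf d_j'$ you only recover the unrestricted $\xi(\vec{\mf r})$ of \eqref{xidef} if the $\mathscr E$ condition has already been dropped from $B$. The paper justifies this by observing that $\dc E\mf B$ has $\ll\log N/\log_2 N$ prime ideal factors of norm exceeding $z$, so the terms with some $\mf m_j$ divisible by one of those contribute only $O((\log N)^{-9998k^2})$. In a full write-up you would need to insert this estimate (it is the source of one of the two ``relative errors from the main term'' you invoke at the end); as stated your sketch jumps from $B$ with the $\mathscr E$ restriction directly to $\sum_{\vec{\mf r}}\xi(\vec{\mf r})^2/\prod_jN_K(\mf r_j)$ without saying how the restriction disappears.
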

\begin{proof}
By expanding the square in the definition of $w(\dc{n})$ and interchanging the order of summation, we have that
\begin{align*}
\sum_{N<N_{K}(\dc{n}) \leq 2 N} w(\dc{n}) &=\sum_{\substack{\rad(N_{K}(\mf{t})) \leq D \\ P^+(N_K(\mf{t}) )\leq z, (\mf{t}, H) = 1}} \mu^{+}(\rad(N_{K}(\mf{t})) )  \cdot \frac{\mu(\mf{t})}{\mu(\rad(N_{K}(\mf{t})))} \\
&\qquad \cdot \sum_{\vec{\mf{d}}, \vec{\mf{e}} \in \mathscr{D} \cap \mathscr{E}} \lambda(\vec{\mf{d}}) \lambda(\vec{\mf{e}}) \sum_{\substack{N<N_{K}(\dc{n}) \leq 2 N \\ \mf{t} \mid\left(\dc{a}_1 n+\dc{b}_1\right) \cdots\left(\dc{a}_k n+\dc{b}_k\right) \\ \forall j:\left[\mf{d}_j, \mf{e}_j\right]\left(\dc{a}_j n+\dc{b}_j\right)}} 1.
\end{align*}
Since $\vec{\mf{d}},\vec{\mf{e}} \in \ms{E}$, Lemma \ref{rp} implies that $(\mf{d}_i\mf{e}_i, \mf{d}_j\mf{e}_j)= 1$ for $i \neq j$ and $(\mf{d}_i\mf{e}_i, \dc{a}_i) = 1$ for all $i$. Consequently, the conditions $[\mf{d}_j, \mf{e}_j] \mid \dc{a}_j n + \dc{b}_j$ define a single residue class mod $\prod_i [\mf{d}_i, \mf{e}_i]$. By definition, the condition $ \mf{t} \mid\left(\dc{a}_1 n+\dc{b}_1\right) \cdots\left(\dc{a}_k n+\dc{b}_k\right)$ defines $\rho(\mf{t})$ residue classes modulo $\mf{t}$. 

Furthermore, $P^+(N_K(\mf{t})) \leq z < P^-(N_K(\mf{d}_j\mf{e}_j))$, so the conditions $[\mf{d}_j, \mf{e}_j] \mid \dc{a}_j n + \dc{b}_j$ define a single residue class mod $\prod_i [\mf{d}_i, \mf{e}_i]$ and  $ \mf{t} \mid\left(\dc{a}_1 n+\dc{b}_1\right) \cdots\left(\dc{a}_k n+\dc{b}_k\right)$ defines $\rho(\mf{t})$ residue classes modulo $\mf{t} \prod_i [\mf{d}_i, \mf{e}_i]$. Thus, by Lemma \ref{elements}, we have that
\begin{small}
\begin{align*}
	&\sum_{N<N_{K}(\dc{n}) \leq 2 N} w(\dc{n}) \\
	& =\sum_{\substack{\rad(N_{K}(\mf{t})) \leq D \\ P^+(N_K(\mf{t})) \leq z,  (\mf{t}, H) = 1}} \mu^{+}(\rad(N_{K}(\mf{t}))) \cdot \frac{\mu(\mf{t})}{\mu(\rad(N_{K}(\mf{t})))}  \\ & \cdot \sum_{\vec{\mf{d}}, \vec{\mf{e}} \in \mathscr{D} \cap \mathscr{E}} \lambda(\vec{\mf{d}}) \lambda(\vec{\mf{e}})\left(\frac{\rho(\mf{t}) N}{N_{K}(\mf{t}\left[\mf{d}_1, \mf{e}_1\right] \cdots\left[\mf{d}_k, \mf{e}_k\right])}+O\left(\rho(\mf{t})\left(1 + \sqrt{\frac{N}{N_{K}(\mf{t}\left[\mf{d}_1, \mf{e}_1\right]\cdots\left[\mf{d}_k, \mf{e}_k\right])}}\right)\right)\right) \\ & =N V^{+} B+T.
\end{align*}
\end{small}
where
\begin{small}
\begin{align*}
V^{+} &=\sum_{\rad(N_{K}(\mf{t})) \leq D}\mu^{+}(\rad(N_{K}(\mf{t})))\cdot  \frac{\rho(\mf{t})}{N_{K}(\mf{t})} \cdot \frac{\mu(\mf{t})}{\mu(\rad(N_{K}(\mf{t})))} \cdot 1_{(\mf{t}, H) = 1}, \\
B &=\sum_{\vec{\mf{d}}, \vec{\mf{e}} \in \mathscr{D} \cap \mathscr{E}} \frac{\lambda(\vec{\mf{d}}) \lambda(\vec{\mf{e}})}{N_K(\left[\mf{d}_1, \mf{e}_1\right] \cdots\left[\mf{d}_k, \mf{e}_k\right])},
\end{align*}
\end{small}
and by \eqref{eq7.13-gaps} and the bound $|\mu^+(n)| \leq 1$,
$$
|T| \ll|\mathscr{D}|^2 \sum_{N_{K}(\mf{t}) \leq D^2} \rho(\mf{t}) + \sqrt{N} \sum_{N_{K}(\mf{t}) \leq D^2} \frac{\rho(\mf{t})}{\sqrt{N_{K}(\mf{t})}} \sum_{\vec{\mf{d}}, \vec{\mf{e}} \in \mathscr{D}} (N_K(\left[\mf{d}_1, \mf{e}_1\right]\cdots\left[\mf{d}_k, \mf{e}_k\right]))^{-1/2}.
$$

We will first estimate the error term $T$. For an ideal $\mf{a}$, let $\omega(\mf{a}) = \{\mf{p} \ideal \mc{O}_K : \mf{p} \mid \mf{a}\}$. Observe that 
$$
\begin{aligned}
|\mathscr{D}| & \leq \sum_{\substack{N_{K}(\mf{r}) \leq R^2 \\
P^{-}(N_K(\mf{r}))>z}} k^{\omega(\mf{r})} \mu^2(\mf{r}) \leq R^2 \sum_{\substack{N_{K}(\mf{r}) \leq R^2 \\
P^{-}(N_K(\mf{r}))>z}} \frac{k^{\omega(\mf{r})} \mu^2(\mf{r})}{N_{K}(\mf{r})} \\
& \leq R^2 \prod_{z<N_{K}(\mf{p}) \leq R^2}\left(1+\frac{k}{N_{K}(\mf{p})}\right) \leq R^2 \prod_{z<N_{K}(\mf{p}) \leq R^2}\left(1+\frac{1}{N_{K}(\mf{p})}\right)^k  \\
&\ll e^{O(k)} \cdot R^2 \cdot \left(\frac{\log R^2}{\log z} \right)^k \ll R^2 \cdot (\log N)^{k+1}.
\end{aligned}
$$

Furthermore, we have that
\begin{align*}
\sum_{N_{K}(\mf{t}) \leq D^2} \rho(\mf{t}) &\leq D^2 \sum_{N_{K}(\mf{t}) \leq D^2} \frac{\rho(\mf{t})}{N_{K}(\mf{t})} \leq D^2 \prod_{N_{K}(\mf{p}) \leq D^2}\left(1+\frac{\min\{k, N_{K}(\mf{p})-1\}}{N_{K}(\mf{p})}\right) \\
&= D^2 \prod_{N_{K}(\mf{p}) \leq 2k}\left(1+\frac{N_{K}(\mf{p})-1}{N_{K}(\mf{p})}\right) \prod_{2k < N_{K}(\mf{p}) \leq D^2}\left(1+\frac{k}{N_{K}(\mf{p})}\right)  \\
&\leq D^2 \cdot 4^{k} \cdot \exp(\sum_{2k < N_{K}(\mf{p}) \leq D^2} \log(1 + \frac{k}{N_{K}(\mf{p})}) ) \\
&\leq D^2 \cdot  \exp(O(k) + k \log\log \frac{D^2}{2k}  ) \ll  D^2 (\log N)^{k+1},
\end{align*}
and similarly,

\begin{align*}
\sum_{N_{K}(\mf{t}) \leq D^2} \frac{\rho(\mf{t})}{\sqrt{N_{K}(\mf{t})}} \leq D	 \sum_{N_{K}(\mf{t}) \leq D^2} \frac{\rho(\mf{t})}{N_{K}(\mf{t})}  \ll D (\log N)^{k+1}.
\end{align*}

Moreover,
\begin{align*}
	 \sum_{\vec{\mf{d}}, \vec{\mf{e}} \in \mathscr{D}} (N_K(\left[\mf{d}_1, \mf{e}_1\right]\cdots\left[\mf{d}_k, \mf{e}_k\right]))^{-1/2} &\leq R 	 \sum_{\vec{\mf{d}}, \vec{\mf{e}} \in \mathscr{D}} (N_K(\left[\mf{d}_1, \mf{e}_1\right]\cdots\left[\mf{d}_k, \mf{e}_k\right]))^{-1} \\
	 &\leq R \prod_{z < N_{K}(\mf{p}) \leq R^2} \left(1 + \frac{3}{N_{K}(\mf{p})} \right)^k \\
	 &\ll R \theta^{3k}	 \left(\frac{\log N}{9999k^2 \log_2(N)} \right)^{3k} \ll R (\log N)^{3k+1}.
\end{align*}

Hence,
$$
T \ll (R^4 D^2 + R D\sqrt{N}) (\log N)^{4k+3}  \ll N^\theta + N^{1 - \frac{1}{2s}} \ll \frac{N}{(\log N)^{9999 k^2}}.
$$

Next, note that
\begin{align*}
&\sum_{\substack{\rad(N_{K}(\mf{t})) \leq D \\ P^+(N_K(\mf{t})) \leq z, (\mf{t}, H) = 1}}\mu^{+}(\rad(N_{K}(\mf{t})))\cdot  \frac{\rho(\mf{t})}{N_{K}(\mf{t})} \cdot \frac{\mu(\mf{t})}{\mu(\rad(N_{K}(\mf{t})))} \\
& = \sum_{x \leq D} \mu^+(x) \frac{1}{\mu(x)}\sum_{\substack{\rad(N_{K}(\mf{t}))=x \\ (\mf{t}, H) = 1}}\frac{\rho(\mf{t})}{N_{K}(\mf{t})} \mu(\mf{t}).
\end{align*}

Define
\begin{align*}
g(x) = \frac{1_{(x,H)=1}}{\mu(x)} \sum_{\rad(N_{K}(\mf{t})) = x} \frac{\rho(\mf{t})}{N_{K}(\mf{t})} \mu(\mf{t}).	
\end{align*}

First, observe that since $\rho$, $N_K$ and $\mu$ are multiplicative functions on the set of ideals of $\mc{O}_K$,  $g$ is a multiplicative function on squarefree integers. Next, note that for any prime $p \in \Z$, there are either one or two prime ideals $\mf{p} \ideal \mc{O}_K$ lying above $p$. When there are two prime ideals $\mf{p}_1$ and $\mf{p}_2$ lying above $p$, there are three squarefree ideals $\mf{a} \ideal \mc{O}_K$ with $\rad(N_{K}(\mf{a})) = p$: the three ideals are $\mf{p}_1$, $\mf{p}_2$ and $\mf{p}_1\mf{p}_2$. On the other hand, when there is a single prime ideal $\mf{p}$ lying above $p$, then there is only one ideal $\mf{p}$ with $\rad(N_{K}(\mf{p})) = p$. In the first case, we have
\begin{align*}
	g(p) &= 1_{\substack{p \leq z \\ p \nmid H}} \left(\frac{\rho(\mf{p}_1)}{N_{K}(\mf{p}_1)} + \frac{\rho(\mf{p}_2)}{N_{K}(\mf{p}_2)} - \frac{\rho(\mf{p}_1 \mf{p}_2)}{N_{K}(\mf{p}_1 \mf{p}_2)}  \right)  \\
	1-g(p) &= \prod_{\substack{\rad(N_{K}(\mf{p}))=p \leq z \\ p \nmid H}} \left(1-\frac{\rho(\mf{p})}{N_{K}(\mf{p})} \right) .
\end{align*}

In the second case, we have that
\begin{align*}
	g(p) &= 1_{\substack{p \leq z \\ p \nmid H}} \left(\frac{\rho(\mf{p})}{N_{K}(\mf{p})} \right) \\ 
	1 - g(p) &= \prod_{\substack{\rad(N_{K}(\mf{p})) = p \leq z \\ p \nmid H}} \left(1 - \frac{\rho(\mf{p})}{N_{K}(\mf{p})} \right).
\end{align*}

Note that $g$ satisfies $(\Omega)$ with $\kappa=3k$ since $\rho(\mf{p}) \leq k$ for all $\mf{p}$ and for all primes $p \in \Z$, and since for any prime $p \in \Z$, the maximum possible number of squarefree ideals $\mf{a} \ideal \mc{O}_K$ with $\rad(N_{K}(\mf{a})) = p$ is $3$. 
Therefore, by the Fundamental Lemma (Theorem \ref{thm3.6}),
\begin{align*}
V^{+}&= V \cdot \left(1+O\left(e^{-\frac{1}{2} s \log s}\right)\right) = V+O\left(\frac{1}{(\log N)^{9999 k^2}}\right) .
\end{align*}

This is a genuine asymptotic since $V \gg e^{-O(k)} (\log z)^{-k} \gg e^{-O(k)} (99999k^2 \log_2(N))^{-k} \gg e^{-O(k(\log(k) + \log_3(N)))}$. We now turn to proving a preliminary upper bound for $B$. For any $\mf{m}_i=\left[\mf{d}_i, \mf{e}_i\right]$, there are at most $3^{\omega\left(\mf{m}_i\right)}$ choices for $\mf{d}_i, \mf{e}_i$. Hence, from \eqref{eq7.13-gaps},

\begin{equation}\label{eq7.19}
\left|\sum_{\vec{\mf{d}}, \vec{\mf{e}} \in \mathscr{D}} \frac{\lambda(\vec{\mf{d}}) \lambda(\vec{\mf{e}})}{N_{K}(\mf{m}_1) \cdots N_{K}(\mf{m}_k)}\right| \leq \prod_{i=1}^k \sum_{\substack{N_{K}(\mf{m}_i) \leq R^4 \\ P^{-}(N_K(\mf{m}_i)>z}} \frac{3^{\omega\left(\mf{m}_i\right)} \mu^2\left(\mf{m}_i\right)}{N_{K}(\mf{m}_i)} \ll (\log N)^{3k+1}.
\end{equation}

To asymptotically bound $B$, we first remove the conditions $\mf{d}, \vec{\mf{e}} \in \mathscr{E}$. Now from \eqref{edef} and \eqref{absizes},
$$
N_{K}(\dc{E}) \ll N^{2 k+4\left(k^2 / 2\right)}.
$$

Hence there are $\ll k^2 \frac{\log N}{\log z} \ll (\log N)/(\log_2 N)$ prime factors of $\dc{E}\mf{B}$ of norm larger than $z$. If $\vec{\mf{d}} \notin \mathscr{E}$ or $\vec{\mf{e}} \notin \mathscr{E}$ then there is a $\mf{p} \mid \dc{E} \mf{B}$ with $\mf{p} \mid \mf{m}_j$ for some $j$. Write $\mf{m}_j=\mf{p} \mf{m}_j^{\prime}$, then analogously to \eqref{eq7.19} we have
\begin{align*}
&\left|\sum_{\substack{\vec{\mf{d}}, \vec{\mf{e}} \in \mathscr{D} \\ \vec{\mf{d}} \notin \mathscr{E} \text { or } \vec{\mf{e}} \not\in \mathscr{E}}} \frac{\lambda(\vec{\mf{d}}) \lambda(\vec{\mf{e}})}{N_{K}(\mf{m}_1) \cdots N_{K}(\mf{m}_k)}\right| \\
&\leq \sum_{j=1}^k \sum_{\substack{\mf{p} \mid \dc{E}\mf{B}	 \\ N_{K}(\mf{p})>z}} \sum_{\substack{N_{K}(\mf{m}_j^{\prime}) \leq R^4 \\ P^{-}(N_K(\mf{m}_j^{\prime}))>z}} \frac{3^{\omega\left(\mf{m}_j^{\prime}\right)+1}}{N_{K}(\mf{m}_j^{\prime}) N_{K}(\mf{p})} \prod_{i \neq j} \sum_{\substack{N_{K}(\mf{m}_i) \leq R^4 \\ P^{-}(N_K(\mf{m}_i))>z}} \frac{3^{\omega\left(\mf{m}_i\right)}}{N_{K}(\mf{m}_i)} \\
&\ll k \cdot \left(O\left(\frac{\log R^2}{\log z}\right)\right)^{3k} \cdot \frac{1}{z} \cdot \frac{\log N}{\log_2 N} \ll \frac{(\log N)^{3k+2}}{(\log N)^{9999k^2}} \ll \frac{1}{(\log N)^{9998k^2}}.
\end{align*}
Therefore, we find that
$$
B=O\left(\frac{1}{(\log N)^{9998k^2}}\right)+B^{\prime}, \quad B^{\prime}=\sum_{\vec{\mf{d}}, \vec{\mf{e}} \in \mathscr{D}} \frac{\lambda(\vec{\mf{d}}) \lambda(\vec{\mf{e}})}{N_{K}(\left[\mf{d}_1, \mf{e}_1\right] \cdots\left[\mf{d}_k, \mf{e}_k\right])},
$$

and consequently,
\begin{equation}\label{eq7.21}
\sum_{N<N_{K}(\dc{n}) \leq 2 N} w(\dc{n})  =N V B^{\prime}+O\left(\frac{N}{(\log N)^{9998 k^2}}\right) .
\end{equation}

Finally, we estimate $B^{\prime}$. We begin with the following identity:
\begin{equation}\label{eq7.22}
\frac{1}{N_{K}([\mf{d}, \mf{e}])}=\frac{N_{K}((\mf{d}, \mf{e}))}{N_{K}(\mf{d} \mf{e})}=\frac{1}{N_{K}(\mf{d} \mf{e})} \sum_{\mf{r} \mid(\mf{d}, \mf{e})} \varphi(\mf{r}).
\end{equation}
From the above identity and the definition of $B'$, we obtain 
\[
\begin{aligned}
B^{\prime} & =\sum_{\vec{\mf{r}} \in \mathscr{D}} \varphi\left(\mf{r}_1\right) \cdots \varphi\left(\mf{r}_k\right)\left(\sum_{\forall j: \mf{r}_j \mid \mf{d}_j} \frac{\lambda(\vec{\mf{d}})}{N_{K}(\mf{d}_1 \cdots \mf{d}_k)}\right)\left(\sum_{\forall j: \mf{r}_j \mid \mf{e}_j} \frac{\lambda(\vec{\mf{e}})}{N_{K}(\mf{e}_1 \cdots \mf{e}_k)}\right) \\
& =\sum_{\vec{\mf{r}} \in \mathscr{D}} \frac{\varphi\left(\mf{r}_1\right) \cdots \varphi\left(\mf{r}_k\right)}{N_{K}(\mf{r}_1^2 \cdots \mf{r}_k^2)} \xi(\vec{\mf{r}})^2 .
\end{aligned}
\]
Any $\mf{r}$ with $N_{K}(\mf{r}) \leq R^2$ has at most $\frac{\log R}{\log z}  \ll \log N$ prime factors with norm $>z$. Hence, for all $\mf{r}_i$,
\begin{equation}\label{eq7.23}
\frac{\varphi\left(\mf{r}_i\right)}{N_{K}(\mf{r}_i)}=\prod_{\mf{p} \mid \mf{r}_i}(1-1 / N_{K}(\mf{p}))=1+O\left(\frac{\log N}{z}\right).
\end{equation}
Since $|\lambda(\vec{\mf{d}})| \leq 1$, we have that
$$
\xi(\vec{\mf{r}}) \leq\left(\sum_{\substack{N_{K}(\mf{d}) \leq R^2 \\ P^{-}(N_K(\mf{d}))>z}} \frac{1}{d}\right)^k \leq \prod_{z<N_{K}(\mf{p}) \leq R^2}\left(1+\frac{1}{N_{K}(\mf{p})}\right)^k \ll (\log N)^{k+1}.
$$
It follows that 
$$
B^{\prime}=\sum_{\vec{\mf{r}} \in \mathscr{D}} \frac{\xi(\vec{\mf{r}})^2}{N_{K}(\mf{r}_1 \cdots \mf{r}_k)}\left(1+O\left(\frac{\log N}{z}\right)\right)=\sum_{\vec{\mf{r}} \in \mathscr{D}} \frac{\xi(\vec{\mf{r}})^2}{N_{K}(\mf{r}_1 \cdots \mf{r}_k)}+O\left(\frac{\log ^{4k+3} N}{z}\right) .
$$
Since the error term above is $ \ll 1 / \log ^{9998 k^2} N,$ the proposition follows from \eqref{eq7.21}.
\end{proof}

\begin{proposition}\label{prop7.9}
Let $\mu^{+}$be an upper bound sieve function from Theorem \ref{flparta} with parameters $z, D$. Let $\lambda(\vec{\mf{d}})$ satisfy \eqref{eq7.13-gaps} and be supported on $\mathscr{D}$. For $\vec{\mf{r}} \in \mathscr{D}$, define
$$
{\zeta}_1(\vec{\mf{r}})=1_{\mf{r}_1=(1)} \sum_{\substack{\vec{\mf{d}} \in \mathscr{D} \\ \mf{d}_1=(1)}} \frac{\lambda\left(\mf{r}_1 \mf{d}_1, \ldots, \mf{r}_k \mf{d}_k\right)}{N_{K}(\mf{d}_1) \cdots N_{K}(\mf{d}_k)} .
$$

Let $\left(\dc{a}_1 n + \dc{b}_1, \ldots, \dc{a}_k n+\dc{b}_k\right)$ be an admissible set of linear forms, with $k \leq (\log N)^{1/9}$ and $k$ larger than a suitable (absolute) constant, such that $(\dc{a}_1,\dc{b}_1) = (1,0)$,
$$
\begin{aligned}
& 1 \leq N_{K}(\dc{a}_i) \leq N^2, \quad\ N_{K}(\dc{b}_i) \leq N^2 \quad(i \neq 1) .
\end{aligned}
$$

Define $E$, $\mathscr{E}$ by \eqref{edef}, $V$ by \eqref{vdef} and $w(\dc{n})$ by \eqref{wdef}. 
 Then, we have
\begin{align*}
\sum_{N<N_{K}(\dc{n}) \leq 2 N} w(\dc{n}) 1_{n \text{ prime }}&=\frac{V \cdot (|U|/h) \cdot (\Li(2N) - \Li(N))}{\prod_{\substack{\mf{p} \nmid H \\ \rad(N_K(\mf{p})) \leq z}}(1-1 / N_{K}(\mf{p}))} \\
&\qquad \cdot \sum_{\vec{\mf{r}} \in \mathscr{D}} \frac{{\zeta}_m(\vec{\mf{r}})^2}{N_{K}(\mf{r}_1) \cdots N_{K}(\mf{r}_k)}+O\left(\frac{N}{(\log N)^{40 k^2}}\right).
\end{align*}
\end{proposition}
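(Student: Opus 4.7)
The plan is to follow the strategy of Proposition~\ref{prop7.8}, replacing the element-counting Lemma~\ref{elements} by the prime-counting estimate in Lemma~\ref{page-bv}. I would expand the square in \eqref{wdef} and interchange the order of summation; since $P^+(N_K(\mf{t}))\leq z < P^-(N_K([\mf{d}_j,\mf{e}_j]))$, the moduli $\mf{t}$ and $\prod_j[\mf{d}_j,\mf{e}_j]$ are coprime, so the Chinese Remainder Theorem reduces the inner sum to counting primes in $\rho(\mf{t})$ residue classes modulo $\mf{q}_{\mathrm{tot}} := \mf{t}\prod_j[\mf{d}_j,\mf{e}_j]$, whose total norm is at most $D^2 R^4$. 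Because of $\mathscr{E}$ and $(\mf{t},H)=1$, this modulus is coprime to $\mf{B}$, so Lemma~\ref{page-bv} applies uniformly, provided the parameter regime places $D^2 R^4$ below the Bombieri--Vinogradov threshold $N^{1/3-\eps}$.

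The key new feature, forced by $(\dc{a}_1,\dc{b}_1)=(1,0)$, is that $\dc{a}_1\dc{n}+\dc{b}_1=\dc{n}$ itself. If $\dc{n}$ is prime with $N_K(\dc{n})>N$ and $[\mf{d}_1,\mf{e}_1]\mid\dc{n}$, then $N_K([\mf{d}_1,\mf{e}_1])\leq R^4<N$ forces $\mf{d}_1=\mf{e}_1=(1)$; similarly no prime $\mf{p}\mid\mf{t}$ can divide $\dc{n}$, so only the residue classes modulo $\mf{t}$ coprime to $\mf{t}$ survive. Let $\rho^*(\mf{t})$ denote this coprime count; since $\dc{b}_1=0$, the class $c\equiv 0\pmod{\mf{p}}$ is always exactly one of the $\rho(\mf{p})$ solutions, giving the multiplicative relation $\rho^*(\mf{p})=\rho(\mf{p})-1$. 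Lemma~\ref{page-bv} then contributes $(|U|/h)(\Li(2N)-\Li(N))/\varphi(\mf{q}_{\mathrm{tot}})$ per admissible class by \eqref{hphi}, with aggregate error bounded (after using $|\lambda|,|\mu^+|\leq 1$) by $\ll N\exp(-c\sqrt{\log N})\ll N/(\log N)^{40k^2}$ using $k\leq(\log N)^{1/9}$.

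The main term factors as $(|U|/h)(\Li(2N)-\Li(N))\cdot V^*\cdot B^*$, where $V^*$ is a sum over $\mf{t}$ weighted by $\mu^+\mu\rho^*/(\mu(\rad)\varphi)$ and $B^*$ is the analogue of $B$ from Proposition~\ref{prop7.8} with each $N_K([\mf{d}_j,\mf{e}_j])$ replaced by $\varphi([\mf{d}_j,\mf{e}_j])$ and with $\mf{d}_1=\mf{e}_1=(1)$. The Fundamental Lemma (Theorem~\ref{thm3.6}) applied to the multiplicative function $\rho^*/\varphi$ evaluates $V^*$ to $\prod_{\mf{p}\nmid H,\,\rad(N_K(\mf{p}))\leq z}(1-\rho^*(\mf{p})/\varphi(\mf{p}))$ up to negligible error, and the telescoping
\[
\frac{1-\rho^*(\mf{p})/\varphi(\mf{p})}{1-\rho(\mf{p})/N_K(\mf{p})} = \frac{N_K(\mf{p})}{N_K(\mf{p})-1}
\]
then yields $V^* = V\cdot\prod_{\mf{p}\nmid H,\,\rad(N_K(\mf{p}))\leq z}(1-1/N_K(\mf{p}))^{-1}$, matching the Mertens factor in the statement. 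For $B^*$, the identity $\varphi([\mf{d},\mf{e}])\varphi((\mf{d},\mf{e}))=\varphi(\mf{d})\varphi(\mf{e})$ together with the approximation $\varphi(\mf{r})/N_K(\mf{r})=1+O((\log N)/z)$ from \eqref{eq7.23} reduces $B^*$ to $\sum_{\vec{\mf{r}}\in\mathscr{D}}\zeta_1(\vec{\mf{r}})^2/(N_K(\mf{r}_1)\cdots N_K(\mf{r}_k))$ via exactly the same manipulations that produce $\xi$ from $B$ at the end of the proof of Proposition~\ref{prop7.8}. I expect the main technical obstacle to lie in controlling the aggregate Bombieri--Vinogradov error, specifically in ensuring $D^2 R^4$ stays within the allowable modulus range $N^{1/3-\eps}$; this amounts to a slight tightening of the $\theta<1$ hypothesis, but is consistent with the parameter regime ultimately used to deduce Theorem~\ref{thm8.6}.
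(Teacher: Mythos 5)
Your outline follows the paper's own proof almost step for step: expanding the square, using the primality of $\dc{n}$ together with $N_K([\mf{d}_1,\mf{e}_1])\le R^4<N$ to force $\mf{d}_1=\mf{e}_1=(1)$, replacing $\rho$ by $\rho^*$ with $\rho^*(\mf{p})=\rho(\mf{p})-1$ because the form $\dc{a}_1\dc{n}+\dc{b}_1=\dc{n}$ contributes the zero class, invoking \eqref{hphi} to convert $h(\mf{u})$ into $\varphi(\mf{u})$, evaluating $V^*$ by the Fundamental Lemma and the telescoping identity $\bigl(1-\tfrac{\rho(\mf{p})-1}{\varphi(\mf{p})}\bigr)=\bigl(1-\tfrac{\rho(\mf{p})}{N_K(\mf{p})}\bigr)\bigl(1-\tfrac{1}{N_K(\mf{p})}\bigr)^{-1}$, and reducing $B^*$ to $\sum_{\vec{\mf{r}}}\zeta_1(\vec{\mf{r}})^2/N_K(\mf{r}_1\cdots\mf{r}_k)$ exactly as in Proposition \ref{prop7.8}. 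Your observation about needing $D^2R^4$ below the level $x^{1/3-\eps}$ of Lemma \ref{page-bv} is also consistent with the paper, which only records $N_K(\mf{u})\le x^\theta$ and implicitly takes $\theta$ small enough.

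The one step that would fail as you have written it is the bound on the Bombieri--Vinogradov error term. You claim the aggregate error is $\ll N\exp(-c\sqrt{\log N})$ ``after using $|\lambda|,|\mu^+|\le 1$,'' but those sup-norm bounds do not remove the combinatorial weights attached to each modulus: a given $\mf{u}=\mf{t}[\mf{d}_1,\mf{e}_1]\cdots[\mf{d}_k,\mf{e}_k]$ arises from up to $(3k)^{\omega(\mf{q})}$ choices of $(\vec{\mf{d}},\vec{\mf{e}})$ and carries $\rho^*(\mf{t})\le k^{\omega(\mf{t})}$ residue classes, and since $\omega(\mf{t})$ can be of size $\log D/\log\log D\asymp \log N/(\log\log N)^2$ these factors can be as large as $\exp\bigl(c\log N/\log\log N\bigr)$, which overwhelms the saving $\exp(-c\sqrt{\log N})$ from Lemma \ref{page-bv}. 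So the error cannot be estimated by (maximal multiplicity) $\times$ (level-of-distribution sum). The paper handles this by Cauchy--Schwarz: write $E(\mf{u})=E(\mf{u})^{1/2}E(\mf{u})^{1/2}$, bound one factor trivially by $(N/N_K(\mf{u}))^{1/2}$ so that the multiplicity weights are absorbed into the convergent average $\sum_{\mf{r}}\mu^2(\mf{r})(3k)^{2\omega(\mf{r})}/N_K(\mf{r})\ll e^{O(k^2)}(\log N)^{9k^2}$, and apply Lemma \ref{page-bv} only to $\sum_{\mf{r}}E(\mf{r})$; this yields $T^*\ll N/(\log N)^{100k^2}$, which is where the exponent $40k^2$ in the statement ultimately comes from. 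With that device inserted your argument goes through and coincides with the paper's; without it the stated error bound is unjustified.
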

\begin{proof}
Again expanding the square in the definition of $w(\dc{n})$ and interchanging the order of summation, we have that
\begin{equation}\label{eq7.27}
\begin{split}
\sum_{N<N_{K}(\dc{n}) \leq 2 N} w(\dc{n}) 1_{\dc{n} \text { prime }}  &=\sum_{\substack{\rad(N_{K}(\mf{t})) \leq D \\ P^{+}(N_K(\mf{t})) \leq z, (\mf{t}, H) = 1}} \mu^{+}(\rad(N_{K}(\mf{t})))\cdot \frac{\mu(\mf{t})}{\mu(\rad(N_{K}(\mf{t})))}  \\
&\qquad \cdot \sum_{\substack{\vec{\mf{d}}, \vec{\mf{e}} \in \mathscr{D} \cap \mathscr{E}}} \lambda(\vec{\mf{d}}) \lambda(\vec{\mf{e}}) \sum_{\substack{N<N_{K}(\dc{n}) \leq 2 N \\ \mf{t} \mid(\dc{a}_1 n+\dc{b}_1) \cdots(\dc{a}_k n+\dc{b}_k) \\ \forall j:[\mf{d}_j, \mf{e}_j \mid(\dc{a}_j n+\dc{b}_j) \\ \dc{n} \text { prime }}} 1 .
\end{split}
\end{equation}
Since \(N_K(\left[\mf{d}_1, \mf{e}_1\right]) \leq R^4 = N^{\theta - 4 / s} < N / 2\), \(\dc{p} := \dc{n}\) is prime and
\[
N_{K}(\dc{p}) \geq \frac{N}{2},
\]
it follows that
\[
\mf{d}_1 = \mf{e}_1 = 1.
\]
Since \(\mf{d}, \vec{\mf{e}} \in \mathscr{E}\), Lemma \ref{rp} implies that for all $i$, \((\mf{d}_i \mf{e}_i, \dc{a}_i) = 1\), and if $i \neq j$, then \((\mf{d}_i \mf{e}_i, \mf{d}_j \mf{e}_j) = 1\). Therefore, for each \(i \neq 1\), the condition \(\left[\mf{d}_i, \mf{e}_i\right] \mid \dc{a}_i \dc{n} + \dc{b}_i\) is equivalent to
\[
\dc{n} \equiv - \dc{a}_i^{-1} \dc{b}_i \pmod{\left[\mf{d}_i, \mf{e}_i\right]}.
\]
Consequently, $\dc{p}$ lies in a single residue class modulo \(\left[\mf{d}_i, \mf{e}_i\right]\). Moreover, this residue class is coprime to \(\left[\mf{d}_i, \mf{e}_i\right]\), since \(\mf{d}, \vec{\mf{e}} \in \mathscr{E}\). We have \(\mf{t} \mid \prod_{i=1}^k (\dc{a}_i \dc{n} + \dc{b}_i)\) and \((\dc{n}, \mf{t}) = 1\). It follows that
\[
\prod_{i \neq 1} (\dc{a}_i \dc{n} + \dc{b}_i) \equiv 0 \pmod{\mf{t}}, \quad (\dc{n}, \mf{t}) = 1.
\]
This defines \(\rho^*(\mf{t})\) residue classes for \(\dc{p}\) modulo \(\mf{t}\), where $\rho^*(\mf{p}) = \rho(\mf{p})-1$ for prime ideals $\mf{p}$. Therefore, the prime \(\dc{p}\) lies in one of \(\rho^*(\mf{t})\) reduced residue classes modulo \(\mf{t}\). Thus, the inner sum in \eqref{eq7.27} defines exactly \(\rho^*(\mf{t}) \) reduced residue classes for the prime $\dc{p}$ modulo \(\mf{t} \left[\mf{d}_1, \mf{e}_1\right] \cdots \left[\mf{d}_k, \mf{e}_k\right]\). Let \(\mf{u} = \mf{t} \left[\mf{d}_1, \mf{e}_1\right] \cdots \left[\mf{d}_k, \mf{e}_k\right]\), and define \(E(\mf{u})\) by
\[
E(\mf{u}) = \max_{(\mf{u}, s) = 1} \left| \pi(2 N ; \mf{u}, s) - \pi(N ; \mf{u}, s) - \frac{\Li(2 N) - \Li(N)}{h(\mf{u})} \right|,
\]
Then, by \eqref{eq7.27} ,
\begin{align*}
\sum_{N < N_{K}(\dc{n}) \leq 2 N} w(\dc{n}) 1_{\dc{n} \text{ prime}} &= \sum_{\substack{\rad(N_{K}(\mf{t})) \leq D \\ (\mf{t}, H) = 1}} \mu^{+}(\rad(N_{K}(\mf{t}))) \cdot \frac{\mu(\mf{t})}{\mu(\rad(N_{K}(\mf{t})))}\\
& \cdot \sum_{\substack{\vec{\mf{d}}, \vec{\mf{e}} \in \mathscr{D} \cap \mathscr{E} \\ \mf{d}_1 = \mf{e}_1 = 1}} \lambda(\vec{\mf{d}}) \lambda(\vec{\mf{e}}) \left[ \rho^*(\mf{t}) \frac{(\Li(2N)-\Li(N))}{h(\mf{u})} + O\left( \rho(\mf{t}) E(\mf{u}) \right) \right] \\
&= (|U|/h) \cdot (\Li(2N)-\Li(N))V^* B^* + T^*,
\end{align*}
where, since \(P^{+}(N_K(\mf{t})) \leq z < P^{-}(N_K\left( \left[\mf{d}_1, \mf{e}_1\right] \cdots \left[\mf{d}_k, \mf{e}_k\right] \right))\), we have that
\[
\begin{aligned}
V^* & = \sum_{\substack{\rad(N_{K}(\mf{t})) \leq D \\ P^+(N_K(\mf{t})) \leq z, (\mf{t}, H) = 1}} \frac{\mu^{+}(\rad(N_{K}(\mf{t}))) \cdot \frac{\mu(\mf{t})}{\mu(\rad(N_{K}(\mf{t})))} \rho^*(\mf{t})}{\varphi(\mf{t})}, \\
B^* & = \sum_{\substack{\vec{\mf{d}}, \vec{\mf{e}} \in \mathscr{D} \cap \mathscr{E} \\ \mf{d}_1 = \mf{e}_1 = 1}} \frac{\lambda(\vec{\mf{d}}) \lambda(\vec{\mf{e}})}{\varphi\left( \left[\mf{d}_1, \mf{e}_1\right] \cdots \left[\mf{d}_k, \mf{e}_k\right] \right)}, \\
|T^*| & \ll \sum_{\substack{N_{K}(\mf{t}) \leq D^2 \\ P^{+}(N_K(\mf{t})) \leq z}} \rho(\mf{t}) \mu^2(\mf{t}) \sum_{\vec{\mf{d}}, \vec{\mf{e}} \in \mathscr{D} \cap \mathscr{E}} E(\mf{u}).
\end{aligned}
\]

We now utilize Lemma \ref{page-bv} to estimate the error term \(T^*\). Define $x = 2N$. Since \(\vec{\mf{d}}, \vec{\mf{e}} \in \mathscr{D}\), the moduli $\mf{u}$ satisfy
\[
N_{K}(\mf{u}) \leq N_{K}(\mf{t} \mf{d}_1 \cdots \mf{d}_k \mf{e}_1 \cdots \mf{e}_k) \leq D^2 R^4 \leq N^{\theta - \frac{2}{s}} \leq x^\theta
\]

if \(N\) is large enough. For each squarefree \(\mf{q} = \left[ \mf{d}_1, \mf{e}_1 \right] \cdots \left[ \mf{d}_k, \mf{e}_k \right]\), there are \(\leq (3 k)^{\omega(\mf{q})}\) ways to choose \(\mf{d}_1, \mf{e}_1, \ldots, \mf{d}_k, \mf{e}_k\). Also, \(\rho(\mf{t}) \leq k^{\omega(\mf{t})}\). Thus, by Cauchy-Schwarz and the bound

\[
E(\mf{u}) \ll \frac{x}{N_{K}(\mf{u})} \ll \frac{N}{N_{K}(\mf{u})},
\]
we obtain the estimate
\[
\begin{aligned}
|T^*| & \ll \sum_{\substack{N_{K}(\mf{t}) \leq D^2 \\ P^{+}(N_K(\mf{t})) \leq z}} \mu^2(\mf{t}) k^{\omega(\mf{t})} \sum_{\substack{P^{-}(N_K(\mf{q})) > z \\ N_{K}(\mf{q}) \leq R^4}} \mu^2(\mf{q}) (3 k)^{\omega(\mf{q})} E(\mf{t} \mf{q}) \\
& \leq \sum_{N_{K}(\mf{r}) \leq D^2 R^4} \mu^2(\mf{r}) (3 k)^{\omega(\mf{r})} E(\mf{r})^{1/2} \left( \frac{N}{N_{K}(\mf{r})} \right)^{1/2} \\
& \ll \left( N  \right)^{1/2} \left( \sum_{P^{+}(N_K(\mf{r})) \leq N} \frac{\mu^2(\mf{r}) (3 k)^{2 \omega(\mf{r})}}{N_{K}(\mf{r})} \right)^{1/2} \left( \sum_{\mf{r} \leq D^2 R^4	} E(\mf{r}) \right)^{1/2} \\
& \ll \left( N  \right)^{1/2} e^{O(k^2)} (\log N)^{9 k^2 / 2} \left( \frac{x}{(\log N)^{1000 k^2}} \right)^{1/2}.
\end{aligned}
\]
Since $x = 2N$ , we conclude that
\[
T^* \ll \frac{N}{(\log N)^{100 k^2}}.
\]

We now turn to estimating $B^*$. The same argument leading to \eqref{eq7.23} yields that
\[
\prod_{i=1}^k \frac{N_{K}(\left[ \mf{d}_i, \mf{e}_i \right])}{\varphi\left( \left[ \mf{d}_i, \mf{e}_i \right] \right)} = 1 + O\left( \frac{k\log N}{z} \right).
\]
Hence, by the argument in the display following \eqref{eq7.19},
\[
B^* = O\left( \frac{1}{(\log N)^{9998k^2}} \right) + \sum_{\substack{ \vec{\mf{d}}, \vec{\mf{e}} \in \mathscr{D} \cap \mathscr{E} \\ \mf{d}_1 = \mf{e}_1 = 1 }} \frac{\lambda(\vec{\mf{d}}) \lambda(\vec{\mf{e}})}{ N_{K}(\left[ \mf{d}_1, \mf{e}_1 \right] \cdots \left[ \mf{d}_k, \mf{e}_k \right]) }.
\]
As in the proof of Proposition \ref{prop7.8}, the terms with \(\vec{\mf{d}} \notin \mathscr{E}\) or \(\vec{\mf{e}} \notin \mathscr{E}\) contribute \( O\left( \frac{1}{(\log N)^{9998k^2}} \right) \). Using \eqref{eq7.22} and \eqref{eq7.23} again, we obtain
\[
B^* = O\left( \frac{1}{(\log N)^{9998k^2}} \right) + \sum_{\vec{\mf{r}} \in \mathscr{D}} \frac{ {\zeta}_1(\vec{\mf{r}})^2 }{ N_{K}(\mf{r}_1 \cdots \mf{r}_k) },
\]

Finally, we apply the Fundamental Lemma (Theorem \ref{thm3.6}) with the function
\[
g(n) = \frac{1_{(n,H)=1}}{\mu(n)} \sum_{\rad(N_{K}(\mf{t}))=n} \frac{ \rho^*(\mf{t}) \mu(\mf{t}) }{ \varphi\left( \mf{t} \right) }.
\]

We have, for primes $p$ with $(p) = \mf{p}$ (inert),
\begin{align*}
g(p) &= 1_{\substack{p \leq z \\ p \nmid H}}\Big(\frac{\rho(\mf{p})-1}{\varphi(\mf{p})} \Big) , \\
1 - g(p) &= \prod_{\substack{\rad(N_{K}(\mf{p})) = p \leq z \\ p \nmid H}} \left(1 - \frac{\rho(\mf{p})-1}{\varphi(\mf{p})} \right) .
\end{align*}
and for primes $p$ with $(p) = \mf{p}_1 \mf{p}_2$ (split),
\begin{align*}
g(p) &=  1_{\substack{\rad(N_{K}(\mf{p}_1)), \rad(N_{K}(\mf{p}_2)) = p \leq z \\ p \nmid H}} \left(\frac{\rho(\mf{p}_1)-1}{\varphi(\mf{p}_1)} + \frac{\rho(\mf{p}_2)-1}{\varphi(\mf{p}_2)} - \frac{\rho(\mf{p}_1)-1}{\varphi(\mf{p}_1)} \cdot\frac{\rho(\mf{p}_2)-1}{\varphi(\mf{p}_2)} \right), \\
1 - g(p) &= \prod_{\substack{\rad(N_{K}(\mf{p})) = p \leq z \\ p \nmid H}} \left(1 - \frac{\rho(\mf{p})-1}{\varphi(\mf{p})} \right) .
\end{align*}

Observe that \(g(p) \leq \frac{4 k}{N_{K}(p)}\) for all \(p\), thus \((\Omega)\) holds with \(\kappa = 4 k\). Then, by Theorem \ref{flparta},

\begin{small}
\[
V^* = \left( 1 + O\left( e^{- \frac{1}{2} s \log s} \right) \right) \prod_{\substack{ \rad(N_{K}(\mf{p})) \leq z \\ \mf{p} \nmid H}} \left( 1 - \frac{\rho(\mf{p})-1}{\varphi(\mf{p})} \right) = \left( 1 + O\left( \frac{1}{ (\log N)^{9999 k^2} } \right) \right) V^{**},
\]
\end{small}
where
\begin{align*}
V^{**} &= \prod_{\substack{ \rad(N_{K}(\mf{p})) \leq z \\ \mf{p} \nmid H}} \left( 1 - \frac{ \rho(\mf{p})-1 }{ N_{K}(\mf{p}) - 1 } \right) \\
&= \prod_{\substack{ \rad(N_{K}(\mf{p})) \leq z \\ \mf{p} \nmid H}} \left( 1 - \frac{\rho(\mf{p}) }{ N_{K}(\mf{p}) } \right) \prod_{\substack{ \rad(N_{K}(\mf{p})) \leq z \\ \mf{p} \nmid H }} \left( 1 - \frac{ 1 }{ N_{K}(\mf{p}) } \right) ^{-1}	\\
&= V \prod_{\substack{ \rad(N_{K}(\mf{p})) \leq z \\ \mf{p} \nmid H}} \left( 1 - \frac{ 1 }{ N_{K}(\mf{p}) } \right) ^{-1}	.
\end{align*}
Therefore,
\[
V^* = V\prod_{\substack{ \rad(N_{K}(\mf{p})) \leq z \\ \mf{p} \nmid \disc(K) \cdot N_K(\mf{B}) }} \left( 1 - \frac{ 1 }{ N_{K}(\mf{p})} \right)^{-1} + O\left( \frac{1}{ (\log N)^{9995 k^2} } \right).
\]
The same argument leading to \eqref{eq7.19} yields \( B^* \ll (\log N)^{3k+1} \), which completes the proof of the proposition.
\end{proof}

\begin{lemma}\label{inversion1}
	For all $\vec{\mf{r}} \in \mathscr{D}$ and $1 \leq m \leq k$,
$$
{\zeta}_1(\vec{\mf{r}})=1_{\mf{r}_1=(1)} \sum_{\mf{b} \ideal \mc{O}_K} \frac{\mu(\mf{b}) \xi\left(\mf{r}_1, \ldots, \mf{r}_{m-1}, \mf{b}, \mf{r}_{m+1}, \ldots, \mf{r}_k\right)}{N_{K}(\mf{b})} .
$$

\end{lemma}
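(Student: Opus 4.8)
The statement is a formal (combinatorial) identity; the plan is to prove it by unfolding $\xi$ on the right-hand side and collapsing a Möbius sum over divisors of a single ideal, so no analytic input is needed.

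First I would dispose of the trivial case. If $\mf{r}_1\neq(1)$, the left-hand side vanishes by the indicator $1_{\mf{r}_1=(1)}$ in the definition of $\zeta_1$, and the right-hand side vanishes by the explicit factor $1_{\mf{r}_1=(1)}$; so we may assume $\mf{r}_1=(1)$. Since $\lambda$ is supported on the finite set $\mathscr{D}$, every sum occurring below — including $\xi$ evaluated at an arbitrary $k$-tuple of ideals, where one simply takes the defining series, which has only finitely many nonzero terms — is a finite sum, so all interchanges of summation are unconditional.

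Next comes the main computation. I would substitute the definition
$\xi(\mf{r}_1,\dots,\mf{r}_{m-1},\mf{b},\mf{r}_{m+1},\dots,\mf{r}_k)=\sum_{\vec{\mf{d}}}\lambda(\mf{r}_1\mf{d}_1,\dots,\mf{b}\mf{d}_m,\dots,\mf{r}_k\mf{d}_k)/(N_K(\mf{d}_1)\cdots N_K(\mf{d}_k))$
into the right-hand side, interchange the $\mf{b}$-sum with the $\vec{\mf{d}}$-sum, and then change variables in the $m$-th coordinate by setting $\mf{c}=\mf{b}\mf{d}_m$ (the full $m$-th argument of $\lambda$). For fixed $\mf{c}$ and fixed $\mf{d}_i$ $(i\neq m)$, the pairs $(\mf{b},\mf{d}_m)$ with $\mf{b}\mf{d}_m=\mf{c}$ correspond exactly to the divisors $\mf{b}\mid\mf{c}$, with $\mf{d}_m=\mf{c}\mf{b}^{-1}$ integral and $N_K(\mf{b})N_K(\mf{d}_m)=N_K(\mf{c})$, so the weight $\mu(\mf{b})/N_K(\mf{b})$ contributes the factor $(\sum_{\mf{b}\mid\mf{c}}\mu(\mf{b}))/N_K(\mf{c})=1_{\mf{c}=(1)}$. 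This forces $\mf{c}=(1)$: the $m$-th argument of $\lambda$ becomes $(1)$ and no multiplier is summed in that slot, leaving precisely the defining expression for $\zeta_1(\vec{\mf{r}})$ (the surviving tuples automatically lie in $\mathscr{D}$ because $\lambda$ is supported there). For $m=1$ this is immediate; running the identical substitution with the $m$-th coordinate playing the role of the first yields the general case.

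The computation is routine, so there is no serious obstacle; the one point I would write out with care is the bookkeeping in the change of variables — namely that for the nonzero terms $\mf{b}$ and $\mf{d}_m$ are automatically squarefree, coprime to each other and to the remaining coordinates, and composed of primes of norm $>z$, so that as $(\mf{b},\mf{d}_m)$ ranges over the admissible pairs with $\mf{b}\mf{d}_m=\mf{c}$ it genuinely ranges over \emph{all} divisor-factorizations of $\mf{c}$, and hence the Möbius sum $\sum_{\mf{b}\mid\mf{c}}\mu(\mf{b})=1_{\mf{c}=(1)}$ is complete. All of these facts are forced by $\lambda$ being supported on $\mathscr{D}$.
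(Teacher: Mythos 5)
Your core computation --- substituting the definition of $\xi$, interchanging sums, setting $\mf{c}=\mf{b}\mf{d}_m$, and collapsing $\sum_{\mf{b}\mid\mf{c}}\mu(\mf{b})=1_{\mf{c}=(1)}$ --- is exactly the paper's argument and is correct for $m=1$.

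Your closing claim, however (``running the identical substitution with the $m$-th coordinate playing the role of the first yields the general case''), skips over a real obstruction. For $m\neq 1$, the change of variables pins the $m$-th slot of $\lambda$ to $(1)$ while leaving the summation over $\mf{d}_1$ free; after collapsing the M\"obius sum one is left with
\[
\sum_{\mf{d}_i,\,i\neq m}\frac{\lambda\bigl(\mf{d}_1,\mf{r}_2\mf{d}_2,\dots,\mf{r}_{m-1}\mf{d}_{m-1},(1),\mf{r}_{m+1}\mf{d}_{m+1},\dots,\mf{r}_k\mf{d}_k\bigr)}{\prod_{i\neq m}N_K(\mf{d}_i)},
\]
whereas $\zeta_1(\vec{\mf{r}})$ with $\mf{r}_1=(1)$ is
\[
\sum_{\mf{d}_i,\,i\neq 1}\frac{\lambda\bigl((1),\mf{r}_2\mf{d}_2,\dots,\mf{r}_m\mf{d}_m,\dots,\mf{r}_k\mf{d}_k\bigr)}{\prod_{i\neq 1}N_K(\mf{d}_i)}.
\]
The first expression constrains the $m$-th coordinate and sums freely over $\mf{d}_1$; the second constrains the first coordinate and sums freely over $\mf{d}_m$. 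Since $\zeta_1$ singles out the first coordinate and is not symmetric under permuting indices, these are not equal for $m>1$, so the identity as displayed is false for $m>1$. This appears to be a misprint in the lemma: the left-hand side should be a coordinate-dependent $\zeta_m$ (with the constraint $\mf{d}_m=(1)$ in its definition), and indeed the paper writes $\zeta_m$ in the statement and proof of Proposition~\ref{prop7.9}. Only $m=1$ is invoked downstream, which is precisely the case both you and the paper's displayed proof treat, so the substance is fine --- but you should not assert the $m>1$ case without correcting the left-hand side or explaining why the coordinate asymmetry disappears (it does not).
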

\begin{proof} Let $\mf{r}_1=(1)$. By \eqref{xidef}, the right side equals
$$
\begin{aligned}
& =\sum_{\mf{b}} \frac{\mu(\mf{b})}{N_{K}(\mf{b})} \sum_{\vec{\mf{d}}} \frac{\lambda\left(\mf{b},\mf{r}_2 \mf{d}_2 \cdots, \mf{r}_k \mf{d}_k\right)}{N_{K}(\mf{d}_1) \cdots N_{K}(\mf{d}_k)} \\
& =\sum_{\mf{d}_i: i \neq 1} \frac{1}{\prod_{i \neq 1} N_{K}(\mf{d}_i)} \sum_{\mf{l} \ideal \mc{O}_K} \frac{\lambda\left(\mf{l}, \mf{r}_2 \mf{d}_2,  \cdots, \mf{r}_k \mf{d}_k\right)}{N_{K}(\mf{l})} \sum_{\mf{b} \mid \mf{l}} \mu(\mf{b})={\zeta}_1(\vec{\mf{r}}).
\end{aligned}
$$
\end{proof}
\begin{lemma}\label{inversion2}
	For all $\mf{d} \ideal \mc{O}_K$,
$$
\lambda(\vec{\mf{d}})=1_{\vec{\mf{d}} \in \mathscr{D}} \sum_{\mathbf{b}} \frac{\mu\left(\mf{b}_1\right) \cdots \mu\left(\mf{b}_k\right) \xi\left(\mf{b}_1 \mf{d}_1, \ldots, \mf{b}_k \mf{d}_k\right)}{N_{K}(\mf{b}_1) \cdots N_{K}(\mf{b}_k)} .
$$

\end{lemma}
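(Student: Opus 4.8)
The plan is to prove Lemma \ref{inversion2} by the same Möbius-inversion maneuver used in the proof of Lemma \ref{inversion1}, but carried out in all $k$ coordinates at once. Fix $\vec{\mf{d}}$. Since $\lambda$ is supported on $\mathscr{D}$, every tuple that contributes to \eqref{xidef} lies in $\mathscr{D}$ and hence has all entries of norm at most $R^2$; consequently $\xi(\mf{b}_1\mf{d}_1,\ldots,\mf{b}_k\mf{d}_k)$ is a finite sum that vanishes unless each $\mf{b}_i$ has norm $\le R^2$, so the sum over $\mathbf{b}$ on the right-hand side of the lemma is genuinely finite and every interchange of summation below is legitimate.

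First I would insert the definition \eqref{xidef} of $\xi$ and write the right-hand side, temporarily dropping the indicator and calling it $S$, as a double sum over $\mathbf{b}$ and an auxiliary vector $\vec{\mf{e}}$:
\[
S=\sum_{\mathbf{b}}\sum_{\vec{\mf{e}}}\frac{\mu(\mf{b}_1)\cdots\mu(\mf{b}_k)}{N_K(\mf{b}_1)\cdots N_K(\mf{b}_k)\,N_K(\mf{e}_1)\cdots N_K(\mf{e}_k)}\,\lambda(\mf{b}_1\mf{d}_1\mf{e}_1,\ldots,\mf{b}_k\mf{d}_k\mf{e}_k).
\]
In each coordinate $i$ I would substitute $\mf{l}_i=\mf{b}_i\mf{e}_i$. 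Because the ideal norm is completely multiplicative, $N_K(\mf{b}_i)N_K(\mf{e}_i)=N_K(\mf{l}_i)$, and for fixed $\mf{l}_i$ the pairs $(\mf{b}_i,\mf{e}_i)$ with $\mf{b}_i\mf{e}_i=\mf{l}_i$ are in bijection with the divisors $\mf{b}_i\mid\mf{l}_i$ (the complementary factor $\mf{e}_i=\mf{l}_i\mf{b}_i^{-1}$ being the unique integral ideal solving the equation). Collecting terms gives
\[
S=\sum_{\vec{\mf{l}}}\frac{\lambda(\mf{l}_1\mf{d}_1,\ldots,\mf{l}_k\mf{d}_k)}{N_K(\mf{l}_1)\cdots N_K(\mf{l}_k)}\prod_{i=1}^k\Big(\sum_{\mf{b}_i\mid\mf{l}_i}\mu(\mf{b}_i)\Big).
\]
By the Möbius identity $\sum_{\mf{b}\mid\mf{l}}\mu(\mf{b})=1_{\mf{l}=(1)}$, valid for the ideal-theoretic $\mu$ of Section \ref{section-notation} (it factors over the prime-power decomposition of $\mf{l}$, and $1+\mu(\mf{p})+\mu(\mf{p}^2)+\cdots=0$), only the term $\vec{\mf{l}}=((1),\ldots,(1))$ survives, so $S=\lambda(\mf{d}_1,\ldots,\mf{d}_k)=\lambda(\vec{\mf{d}})$.

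Finally, since $\lambda$ is supported on $\mathscr{D}$ we have $\lambda(\vec{\mf{d}})=1_{\vec{\mf{d}}\in\mathscr{D}}\lambda(\vec{\mf{d}})$, so the right-hand side of the lemma equals $1_{\vec{\mf{d}}\in\mathscr{D}}\,S=\lambda(\vec{\mf{d}})$, as desired. I do not anticipate a genuine obstacle here; the only points meriting a sentence of justification are the finiteness of the sums (so the rearrangement is valid), the complete multiplicativity of $N_K$ on integral ideals, and the fact that the $\mu$ defined in Section \ref{section-notation} obeys the standard Möbius identity over the divisor lattice of integral ideals of $\mc{O}_K$ — all immediate from the definitions, exactly as in Lemma \ref{inversion1}.
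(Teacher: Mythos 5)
Your proof is correct and follows exactly the same route as the paper's: insert the definition \eqref{xidef}, reindex by $\mf{l}_i=\mf{b}_i\mf{e}_i$, and apply the M\"obius identity $\sum_{\mf{b}\mid\mf{l}}\mu(\mf{b})=1_{\mf{l}=(1)}$ coordinate-by-coordinate. The extra sentences you add (finiteness of the sums via the support of $\lambda$, the observation that $1_{\vec{\mf{d}}\in\mathscr{D}}\lambda(\vec{\mf{d}})=\lambda(\vec{\mf{d}})$) are harmless clarifications of the same argument, not a departure from it.
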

\begin{proof}
Let $\vec{\mf{d}} \in \mathscr{D}$. By \eqref{xidef}, the right side is
$$
\begin{aligned}
& =\sum_{\mathbf{b}} \frac{\mu\left(\mf{b}_1\right) \cdots \mu\left(\mf{b}_k\right)}{N_{K}(\mf{b}_1) \cdots N_{K}(\mf{b}_k)} \sum_{\vec{\mf{e}}} \frac{\lambda\left(\mf{b}_1 \mf{d}_1 \mf{e}_1, \ldots, \mf{b}_k \mf{d}_k \mf{e}_k\right)}{N_{K}(\mf{e}_1) \cdots N_{K}(\mf{e}_k)} \\
& =\sum_{\mathbf{l}} \frac{\lambda\left(\mf{l}_1 \mf{d}_1, \ldots, \mf{l}_k \mf{d}_k\right)}{N_{K}(\mf{l}_1) \cdots N_{K}(\mf{l}_k)} \prod_{i=1}^k \sum_{\mf{b}_i \mid \mf{l}_i} \mu\left(\mf{b}_i\right)=\lambda(\vec{\mf{d}}) .
\end{aligned}
$$
\end{proof}

We require the following lemma to estimate sums over rough numbers:
\begin{lemma}\label{lmm:MultipleSummationModified}
Let \( r \leq k \ll (\log R)^{1/5} \). Let \( W_1, \dots, W_r \) be positive integers, each with all prime factors at most $(\log R)^{99999k^2}$, and each a multiple of all primes \( p \leq (\log R)^{4000k^2} \). Let \( g \) and $h$ be arithmetic functions with $g$ multiplicative,  $g(p)/h(p) = 1 + O(k/p)$, $h(p) \gg p$ for $p \geq (\log R)^{4000k^2}$, and for all $x \geq 2$,
\begin{align*}
\sum_{p \leq x} \frac{\log p}{h(p)} = \log(x) + O(1).
\end{align*}
Let \( G: \mathbb{R} \rightarrow \mathbb{R} \) be a smooth function supported on the interval \( [0, 1] \) such that
\[
\sup_{t \in [0, 1]} \left( |G(t)| + |G'(t)| \right) \leq \Omega_G \int_0^\infty G(t) \, dt,
\]
for some quantity \( \Omega_G \) satisfying \( r \Omega_G  = o\left( \dfrac{\log R}{k^2 \log\log R} \right) \).

Let \( \Phi: \mathbb{R} \rightarrow \mathbb{R} \) be smooth with \( \Phi(t), \Phi'(t) \ll 1 \) for all \( t \).

Then for \( k \) sufficiently large, we have
\begin{align*}
\sum_{\substack{\vec{\mf{e}} \in \mathbb{N}^r \\ (e_i, W_i) = 1\ \forall i}} \frac{\mu^2(e)}{g(e)} & \Phi\left( \sum_{i=1}^k \frac{\log e_i}{\log R} \right) \prod_{i=1}^k G\left( \frac{\log e_i}{\log R} \right) = \Pi_g (\log R)^r \idotsint\limits_{t_1, \dots, t_r \geq 0} \Phi\left( \sum_{i=1}^r t_i \right) 	\prod_{i=1}^r G(t_i) \, dt_i \\
&\quad + O\left( r \Omega_G k^2 \log\log R \cdot \Pi_g (\log R)^{r - 1} \idotsint\limits_{t_1, \dots, t_r \geq 0} \prod_{i=1}^r G(t_i) \, dt_i \right),
\end{align*}
where
\[
\Pi_g = \prod_p \left( 1 + \frac{n(p)}{g(p)} \right) \left( 1 - \frac{1}{p} \right)^r, \qquad n(p) = \#\left\{ i \in \{1, \dots, r\} : p \nmid W_i \right\}.
\]
\end{lemma}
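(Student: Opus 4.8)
The plan is to prove the estimate by induction on $r$, peeling off one variable at a time and reducing to a one-dimensional weighted sum over rough numbers, which is handled by a Wirsing/Selberg--Delange argument. (Fourier-inverting $G$ and a compactly supported smoothing of $\Phi$ and analysing the resulting multi-variable Dirichlet series is an equivalent route, but the peeling keeps the error bookkeeping cleaner.) First, the standing reductions: with $e:=e_1\cdots e_r$, the constraint $\mu^2(e)=1$ makes the $e_i$ pairwise coprime and squarefree, multiplicativity of $g$ gives $g(e)=\prod_i g(e_i)$, and since each $W_i$ is divisible by all primes $p\le(\log R)^{4000k^2}$ we have $P^-(e_i)>(\log R)^{4000k^2}$; on the support of $\prod_i G(\tfrac{\log e_i}{\log R})$ one has $e_i\le R$, so each $e_i$ has $O(\log R/(k^2\log\log R))$ prime factors.

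To peel off $e_1$, set $b:=\sum_{i\ge 2}\tfrac{\log e_i}{\log R}$; the $e_1$-weight is $\Psi_b(\tfrac{\log e_1}{\log R})$ with $\Psi_b(t):=G(t)\Phi(t+b)$, smooth, supported on $[0,1]$, vanishing at $0,1$, and $\|\Psi_b\|_\infty+\|\Psi_b'\|_\infty\ll\Omega_G\int_0^\infty G$. Dropping the harmless constraint $(e_1,e_2\cdots e_r)=1$ costs $1+O((\log R)^{1-4000k^2}/(k^2\log\log R))=1+o(1)$. The key input is the one-variable estimate: for $W$ divisible by all primes $\le(\log R)^{4000k^2}$ with all prime factors $\le(\log R)^{99999k^2}$,
\[
M_W(y):=\sum_{\substack{e\le y\\(e,W)=1}}\frac{\mu^2(e)}{g(e)}=\Pi_g^{(W)}\log y+O(1),\qquad \Pi_g^{(W)}:=\Big(\prod_{p\nmid W}\big(1+\tfrac1{g(p)}\big)\Big)\prod_p\big(1-\tfrac1p\big),
\]
with absolute implied constant. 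This follows by writing $\mathcal F_W(s)=\prod_{p\nmid W}(1+g(p)^{-1}p^{-s})=\zeta(1+s)\mathcal H_W(s)$, showing $\mathcal H_W$ holomorphic and bounded for $\Re s\ge -c_0/\log R$ --- here the hypotheses enter through $g(p)/h(p)=1+O(k/p)$, $h(p)\gg p$, and (by partial summation from $\sum_{p\le x}\tfrac{\log p}{h(p)}=\log x+O(1)$) the uniform estimate $\sum_{p\le x}(g(p)^{-1}-p^{-1})=O(1)$ --- then shifting the Perron contour past the pole of $\zeta(1+s)$ at $s=0$, whose residue is $\mathcal H_W(0)=\Pi_g^{(W)}$.

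Feeding $M_{W_1}(e^t)=\Pi_g^{(W_1)}t+O(1)$ into $\Psi_b(\tfrac{\log e_1}{\log R})=-\tfrac1{\log R}\int_{\log e_1}^{\log R}\Psi_b'(\tfrac t{\log R})\,dt$, interchanging sum and integral, and integrating by parts gives $\sum_{(e_1,W_1)=1}\tfrac{\mu^2(e_1)}{g(e_1)}\Psi_b(\tfrac{\log e_1}{\log R})=\Pi_g^{(W_1)}(\log R)\Phi_1(b)+O(\Omega_G\int_0^\infty G)$, where $\Phi_1(b):=\int_0^1 G(t)\Phi(t+b)\,dt$ is again smooth with $\Phi_1,\Phi_1'\ll 1$. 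The leftover sum over $(e_2,\dots,e_r)$ has the original shape with $r\mapsto r-1$, $\Phi\mapsto\Phi_1$; iterating $r$ times yields the main term $\big(\prod_{j=1}^r\Pi_g^{(W_j)}\big)(\log R)^r\idotsint_{t_i\ge0}\Phi(\textstyle\sum_i t_i)\prod_i G(t_i)\,dt_i$, and since only primes $p>(\log R)^{4000k^2}$ give a nontrivial factor, $\prod_j\Pi_g^{(W_j)}=\prod_p(1+g(p)^{-1})^{n(p)}(1-p^{-1})^r=\Pi_g\prod_p(1+O(r^2/g(p)^2))=\Pi_g(1+o(1))$, recovering the claimed main term. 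For the error, the $O(1)$ from $M_{W_j}$ at step $j$ becomes $O(\Omega_G\int_0^\infty G)$, then is carried through the $r-j$ remaining sums (each a factor $\asymp\Pi_g^{(W)}\log R\asymp\log R/(k^2\log\log R)$); measured against $\Pi_g(\log R)^r\idotsint\prod G$ and using $(\Pi_g^{(W)})^{-1}\asymp k^2\log\log R$, each step contributes $O(\Omega_G k^2\log\log R\cdot\Pi_g(\log R)^{r-1}\idotsint\prod G)$, and the $r$ steps sum to the stated total error; the hypothesis $r\Omega_G=o(\log R/(k^2\log\log R))$ makes it smaller than the main term.

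The main obstacle is the one-variable estimate $M_W(y)=\Pi_g^{(W)}\log y+O(1)$ with an \emph{absolute} error: the hypotheses give only $O(1)$-control (no power saving) on $\sum_{p\le x}\log p/h(p)$, and $g$ is just a bounded multiplicative perturbation of the not-necessarily-multiplicative $h$, so $\mathcal F_W$ has no absolutely convergent Euler factorisation near $s=0$; one must run the Selberg--Delange contour argument while carefully tracking how every error depends on $k$, on $W$, and on the width of the window $((\log R)^{4000k^2},(\log R)^{99999k^2}]$ of ``ambiguous'' primes. Secondarily --- but purely bookkeeping --- one must check that the $1+o(1)$ fudge factors and the discrepancy between $\prod_j\Pi_g^{(W_j)}$ and $\Pi_g$ stay under the error threshold uniformly for $r\le k\ll(\log R)^{1/5}$.
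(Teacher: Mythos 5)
Your reduction to a one-variable estimate by peeling off $e_1,\ldots,e_r$ one at a time, the error bookkeeping (propagating a per-step $O(\Omega_G\int_0^\infty G)$ through the remaining variables to produce the factor $k^2\log\log R$), and the reconciliation of $\prod_j \Pi_g^{(W_j)}$ with $\Pi_g$ up to $1+O(k^2/(\log R)^{4000k^2})$ are all structurally the same as the proof of Lemma~8.4 of Maynard's dense-clusters paper, which is what the paper's one-line proof defers to; the paper's remark that ``the only change required \dots\ is that $L \ll k^2\log\log R$'' refers to exactly the Mertens error you are carrying. So the skeleton of your argument matches the intended one.

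The genuine gap is the one-variable input $M_W(y)=\Pi_g^{(W)}\log y+O(1)$ with an absolute implied constant, together with your proposed proof of it via the factorisation $\mathcal F_W(s)=\zeta(1+s)\mathcal H_W(s)$ and a contour shift. You flag this as the main obstacle, and it is a real one, not merely a bookkeeping chore: the hypothesis on $h$ is only the Mertens bound $\sum_{p\le x}\log p/h(p)=\log x+O(1)$, with no pointwise control on $h(p)-p$, so $\mathcal H_W(s)$ has no absolutely convergent Euler product in any fixed neighbourhood of $s=0$; the claim that $\mathcal H_W$ is ``holomorphic and bounded for $\Re s\ge -c_0/\log R$'' does not follow from the stated inputs, and the residue step cannot be executed as written. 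Furthermore, the estimate $\sum_{p\le x}(g(p)^{-1}-p^{-1})=O(1)$ you extract by partial summation concerns the full sum over $p$, while what enters $\mathcal H_W$ and $M_W$ is the restriction to $p\nmid W$; that subsum is only $O(k^2\log\log R)$, and the absolute $O(1)$ for $M_W(y)$ re-emerges only after multiplication by $\Pi_g^{(W)}\asymp 1/(k^2\log\log R)$ --- which is precisely the bookkeeping the complex-analytic shortcut was meant to avoid. The cited lemma proves the corresponding one-variable estimate directly and elementarily from the Mertens hypothesis (a Wirsing/Chebyshev-style recursion in which $L$ is carried explicitly), sidestepping complex analysis altogether. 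Your proof would be complete if the Selberg--Delange step were replaced by that elementary mean-value estimate with $L\ll k^2\log\log R$ tracked through it; as written, the crucial one-variable lemma is asserted rather than established.
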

\begin{proof}
This lemma is nearly identical to \cite{denseclusters}{Lemma 8.4}. The only change required to the proof of \cite{denseclusters}{Lemma 8.4} is that $L \ll k^2 \log\log R$ rather than $L \ll \log\log R$. 
\end{proof}

Let $\psi:[0,\infty)\rightarrow[0,1]$ be a fixed smooth non-increasing function supported on $[0,1]$ which is $1$ on $[0,9/10]$. Let $F:\mathbb{R}^k\rightarrow\mathbb{R}$ be the smooth function defined by
\begin{equation}\label{eq:FDef}
F(t_1,\dots,t_k)=\psi\Bigl(\sum_{i=1}^kt_i\Bigr)\prod_{i=1}^k\frac{\psi(t_i/U_k)}{1+T_kt_i},\qquad T_k=k\log{k},\qquad U_k=k^{-1/2}.
\end{equation}
 In particular, we note that this choice of $F$ is non-negative, and that the support of $\psi$ implies that
\begin{equation}
\lambda_{\vec{\mf{d}}}=0\quad\text{if $\textstyle d=\prod_{i=1}^kd_i>R$.}
\end{equation}
Let $\tilde{g}(t) = \frac{\psi(t/U_k)}{1 + T_k t}$. Let $\Phi_1(t) = (\psi(t))^2$, and let $\Phi_2(t) = \Big(\int_{-\infty}^\infty\psi(t+u)\tilde{g}(u) du\Big)^2$. Finally, let $G(t) = (\tilde{g}(t))^2$. Note that with these definitions, we have that
\begin{align*}
\int_0^\infty G(t) dt \gg \frac{1}{T_k},	
\end{align*}
and
\begin{align*}
\sup_{t \in [0,1]} |G(t)| + |G'(t)| \ll T_k. 	
\end{align*}
Consequently, the function $G$ satisfies the hypotheses of Lemma \ref{lmm:MultipleSummationModified} with $\Omega_{G} \ll T_k^2$. Furthermore, $\Phi_1, \Phi_1' \ll 1$ trivially. The bound $0 \leq \tilde{g}(t) \leq 1$ implies that $\Phi_2(t) , \Phi_2'(t) \ll 1$. Since $k \ll (\log R)^{1/9}$ by assumption, it follows that 
\begin{equation*}
r\Omega_G k^2 \ll (T_k)^2 k^3 \ll k^5 (\log k)^2 = o \Big(\frac{\log R}{\log \log R}\Big),
\end{equation*}
and hence
\begin{equation*}
	r \Omega_G = o \Big(\frac{\log R}{k^2 \log \log R} \Big)
\end{equation*}
as required by the hypotheses of Lemma \ref{lmm:MultipleSummationModified}. Let 

\begin{align*}
\xi(\vec{\mf{r}})&= F\left(\frac{\log \rad(N_{K}(\mf{r}_1))}{\log R}, \ldots, \frac{\log \rad(N_{K}(\mf{r}_k))}{\log R}\right) \\
&\qquad \cdot 1_{\vec{\mf{r}} \in \mathscr{D}} \mu\left(\mf{r}_1\right) \cdots \mu\left(\mf{r}_k\right) \underbrace{\prod_{z<\rad(N_{K}(\mf{p})) \leq R}(1+k / N_{K}(\mf{p}))^{-1}}_{\text {constant }} .
\end{align*}

Define 
\begin{align*}
g(n) = \left(\sum_{\rad(N_{K}(\mf{t}))= n} \frac{1}{N_{K}(\mf{t})}\right)^{-1}.
\end{align*}
Note that if $(p) =\mf{p}_1 \mf{p}_2$ is split, then
\begin{align*}
g(p) &= \left(\frac{2}{p} + \frac{1}{p^2}\right)^{-1} = \frac{p}{2} \cdot \frac{1}{1 + 1/(2p)} = \frac{p}{2}(1 + O(1/p)).
\end{align*}
and if $(p) = \mf{p}$ is inert, then
\begin{align*}
g(p) &= p^2.	
\end{align*}
For finitely many ramified primes, we have $g(p)= p$. Consequently, if we let $h(p) = p^2$ if $p$ inert, $h(p) = p/2$ if $p$ is split, and $h(p) = p$ if $p$ is ramified, this satisfies the conditions $g(p)/h(p) = 1 + O(k/p)$ and $h(p) \gg p$ in Lemma \ref{lmm:MultipleSummationModified}. Furthermore, by Theorem \ref{cheb}, we have that
\begin{small}
\begin{align*}
\sum_{p \leq x} \frac{\log p}{h(p)} &= O(1) +  2\sum_{\substack{p \leq x \\ 	(p) = \mf{p}_1\mf{p}_2}} \frac{\log p}{p} + \sum_{\substack{p \leq x \\ (p) = \mf{p}}} \frac{\log p}{p^2} = \sum_{N_{K}(\mf{p}) \leq x} \frac{\log(N_{K}(\mf{p}))}{N_{K}(\mf{p})} + O(1) = \log x + O(1).
\end{align*}
\end{small}
Finally, note that by Lemma \ref{inversion2}, we have that
\begin{small}
\begin{align*}
\left|\lambda(\vec{\mf{d}}) \prod_{z<\rad(N_{K}(\mf{p})) \leq R}(1+k /N_{K}(\mf{p}))\right| &\leq \sum_{\vec{\mf{b}} \in \mathscr{D}} \frac{1}{N_{K}(\mf{b}_1) \cdots N_{K}(\mf{b}_k)} \leq \sum_{\substack{P^{-}(N_K(\mf{l}))>z \\ P^{+}(N_K(\mf{l})) \leq R^2}} \frac{\mu^2(\mf{l}) k^{\omega(\mf{l})}}{N_{K}(\mf{l})} \\
&=\prod_{z< \rad(N_{K}(\mf{p})) \leq R}(1+k / N_{K}(\mf{p})),
\end{align*}
\end{small}
i.e., $|\lambda(\vec{\mf{d}})| \leq 1$ for $\vec{\mf{d}} \in \ms{D}$. 

We require the following result, which is \cite{denseclusters}{Lemma 8.6}:
\begin{lemma}[Maynard, Lemma 8.6]\label{lmm:IkJk}
Given a square-integrable function $G:\mathbb{R}^k\rightarrow\mathbb{R}$, let
\[I_k(G)=\int_0^\infty\dotsi\int_0^\infty G^2dt_1\dots dt_k,\qquad J_k(G)=\int_0^\infty \dots \int_0^\infty \Bigl(\int_0^\infty G \, dt_k\Bigr)^2dt_1\dots dt_{k-1}.\]
Let $F$ be as given by \eqref{eq:FDef}. Then
\begin{align*}
\frac{1}{(2k\log{k})^k}\ll I_k(F)\le \frac{1}{(k\log{k})^k},\qquad &\frac{\log{k}}{k}\ll \frac{J_k(F)}{I_k(F)}\ll \frac{\log{k}}{k}.
\end{align*}
\end{lemma}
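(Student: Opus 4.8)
The plan is to express both $I_k(F)$ and $J_k(F)$ in terms of the two one-dimensional moments $\mu_1:=\int_0^\infty \tilde{g}(t)\,dt$ and $\mu_2:=\int_0^\infty \tilde{g}(t)^2\,dt$ of the weight $\tilde{g}(t)=\psi(t/U_k)/(1+T_kt)$ from \eqref{eq:FDef}, and then to show that the global cutoff $\psi(t_1+\dots+t_k)$ only costs a factor $1+o(1)$ in each quantity. There is no arithmetic content here: this is the same purely real-analytic computation as in \cite{denseclusters}, which I now outline.

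First I would record the two moments. Since $\int_0^\infty (1+T_kt)^{-2}\,dt=1/T_k$ and $\psi(t/U_k)=1$ for $t\le \tfrac9{10}U_k$, while the contribution of $t>\tfrac9{10}U_k$ to $\mu_2$ is $O\!\big(\tfrac1{T_k}(T_kU_k)^{-1}\big)=O\!\big(\tfrac1{T_k}\,k^{-1/2}/\log k\big)$, one gets $\mu_2=\tfrac1{T_k}\big(1+O(k^{-1/2}/\log k)\big)$. For $\mu_1$ the cutoff at $t\asymp U_k$ is exactly what makes the integral finite: $\mu_1$ lies between $\int_0^{9U_k/10}\frac{dt}{1+T_kt}$ and $\int_0^{U_k}\frac{dt}{1+T_kt}$, both of which equal $\frac1{T_k}\log(1+cT_kU_k)$ for $c\in\{9/10,1\}$, and since $T_kU_k=k^{1/2}\log k$ this gives $\mu_1=\frac{\frac12\log k+O(\log\log k)}{T_k}=\frac{1+o(1)}{2k}$. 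The upper bound $I_k(F)\le \mu_2^{\,k}\le T_k^{-k}=(k\log k)^{-k}$ is then immediate: from $0\le\psi\le1$ we have $F^2\le \prod_i (1+T_kt_i)^{-2}$, and we integrate by Tonelli.

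The one real obstacle is controlling the constraint $\psi(\sum_i t_i)$, and the mechanism is concentration of measure. Let $T_1,T_2,\dots$ be i.i.d.\ with density $\mu_2^{-1}\tilde{g}^2$ on $[0,\infty)$. Substituting $u=T_kt$ one computes $\E[T_1]=\mu_2^{-1}\int_0^\infty t\,\tilde{g}(t)^2\,dt=\frac{1+o(1)}{2k}$ and $\E[T_1^2]\le \mu_2^{-1}\int_0^{U_k}\frac{t^2\,dt}{(1+T_kt)^2}=O(U_k/T_k)=O(k^{-3/2}/\log k)$. Hence for every $m\le k$ we have $\E\big(\sum_{i\le m}T_i\big)\le \tfrac12+o(1)$ and $\Var\big(\sum_{i\le m}T_i\big)\le m\,\E[T_1^2]=O(k^{-1/2}/\log k)=o(1)$, so Chebyshev gives $\Pr\big[\sum_{i\le m}T_i>\tfrac9{10}-U_k\big]=o(1)$. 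This is where the choices $T_k=k\log k$, $U_k=k^{-1/2}$ are used essentially: they force each coordinate to have mean $\sim 1/(2k)$ and vanishingly small variance, so $\sum_i t_i$ piles up near $\tfrac12$, comfortably below the threshold $\tfrac9{10}$. Using $\psi^2\ge 1_{[0,9/10]}$ we then obtain $I_k(F)\ge \mu_2^{\,k}\Pr[\sum_{i\le k}T_i\le\tfrac9{10}]=(1-o(1))\mu_2^{\,k}$, so $I_k(F)=(1+o(1))\mu_2^{\,k}$, which with the moment estimates gives $(2k\log k)^{-k}\ll I_k(F)\le (k\log k)^{-k}$. (A self-contained alternative for the lower bound alone: restrict the integral for $I_k(F)$ to the box $[0,\tfrac9{10k}]^k$, where both cutoffs equal $1$, to get $I_k(F)\ge \big(\tfrac{9/(10k)}{1+\frac9{10}\log k}\big)^k\ge (2k\log k)^{-k}$ for $k$ large.)

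Finally, for $J_k(F)$ I would write $s=t_1+\dots+t_{k-1}$ and $h(s)=\int_0^\infty\psi(s+t)\tilde{g}(t)\,dt$, so that $J_k(F)=\int_{[0,\infty)^{k-1}}\big(\prod_{i<k}\tilde{g}(t_i)^2\big)h(s)^2\,dt$. The key observation is that $\operatorname{supp}\tilde{g}\subseteq[0,U_k]$, so $h(s)=\mu_1$ whenever $s\le\tfrac9{10}-U_k$ (then $\psi(s+t)=1$ on all of $\operatorname{supp}\tilde{g}$), while $0\le h(s)\le\mu_1$ always. Splitting the integral at $s=\tfrac9{10}-U_k$ and invoking the concentration bound with $m=k-1$ yields $J_k(F)=(1+o(1))\mu_1^2\mu_2^{\,k-1}$. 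Dividing by $I_k(F)=(1+o(1))\mu_2^{\,k}$ gives $\frac{J_k(F)}{I_k(F)}=(1+o(1))\frac{\mu_1^2}{\mu_2}=(1+o(1))\frac{(\log k)^2}{4T_k}=(1+o(1))\frac{\log k}{4k}\asymp\frac{\log k}{k}$, completing the proof; small $k$ are absorbed into the implied constants throughout.
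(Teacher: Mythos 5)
The paper does not prove this lemma; it is cited verbatim from Maynard's \emph{Dense clusters of primes in subsets} (\cite{denseclusters}, Lemma 8.6), so there is no internal proof to compare against. Your argument is correct and reproduces the essential structure of Maynard's own proof: reduce everything to the two one-dimensional moments $\mu_1=\int\tilde g$ and $\mu_2=\int\tilde g^2$, get the trivial dominations $I_k(F)\le\mu_2^k$ and $J_k(F)\le\mu_1^2\mu_2^{k-1}$ by dropping $\psi(\sum t_i)$, and then show the global cutoff costs only $1+o(1)$ by a concentration estimate --- under the probability measure with density $\mu_2^{-1}\tilde g^2$, each coordinate has mean $(1+o(1))/(2k)$ and second moment $O(k^{-3/2}/\log k)$, so $\sum t_i$ concentrates near $1/2$, safely below the cutoff $9/10$. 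Your phrasing of this step via Chebyshev's inequality on i.i.d.\ random variables is a clean and correct packaging of the same computation, your parenthetical box estimate for the lower bound on $I_k(F)$ is also valid (the exact evaluation $\int_0^{9/(10k)}(1+T_kt)^{-2}\,dt=\tfrac{9/(10k)}{1+\frac9{10}\log k}$ does give $\ge(2k\log k)^{-1}$ for $k\ge 4$), and the final identity $J_k(F)/I_k(F)=(1+o(1))\mu_1^2/\mu_2\asymp(\log k)/k$ matches the stated conclusion.
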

\begin{proposition}\label{prop7.14}
Let $F$ be given by \eqref{eq:FDef}, with $I(F) , J(F) \gg (2k \log k)^{-k}$. Define $\xi$ by \eqref{xidef}.
\begin{enumerate}[label=(\roman*).]
\item Under the hypotheses of Proposition \ref{prop7.8}, we have
$$
\sum_{N< N_{K}(\dc{n}) \leq 2 N} w(\dc{n}) = V N\left(e^{-\gamma} \frac{\log z}{\log R}\right)^k I(F) \left(1 + O\left(\frac{1}{\log^{1/99}(N)} \right) \right) \quad(k, N \rightarrow \infty),
$$

where

$$
I(F)=\int_{\mathcal{R}_k} F^2(\mathbf{x}) d \mathbf{x} .
$$

\item Under the hypotheses of Proposition \ref{prop7.9}, we have
\begin{align*}
\sum_{N<N_{K}(\dc{n}) \leq 2 N} w(\dc{n}) 1_{n \text { prime }} &=  V N\left(e^{-\gamma} \frac{\log z}{\log R}\right)^k \frac{\theta}{4} c_{K,\mf{B}} \cdot J(F) \left(1 + O\left(\frac{1}{\log^{1/99}(N)} \right) \right) \\
&\qquad(k, N \rightarrow \infty),
\end{align*}
where 

\[
c_{K,\mf{B}} 
=
\frac{|U|}{h}\prod_{\substack{p \mid \disc(K) \cdot N_K(\mf{B}) \\ p \leq z}} \left(1 - \frac{1}{p} \right)
 \cdot \lim_{z \to \infty} \frac{e^{-\gamma}/\log z}{ \prod_{\rad(N_{K}(\mf{p})) \leq z} \left(1 - \frac{1}{N_{K}(\mf{p})}\right)},
\]

and

$$
J(F)=\int_{x_2, \ldots, x_k} \ldots \int\left(\int F(\mathbf{x}) d x_1\right)^2 d x_2 \cdots d x_n.
$$
\end{enumerate}
\end{proposition}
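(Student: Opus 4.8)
The two parts are proved in parallel: substitute the explicit weight $\xi$ of \eqref{xidef} (built from $F$ in \eqref{eq:FDef}) into Propositions \ref{prop7.8} and \ref{prop7.9}, pass from ideals to rational integers, and evaluate the resulting sums with Lemma \ref{lmm:MultipleSummationModified}. Throughout, the overall strategy mirrors the rational case treated in \cite{denseclusters}; the new ingredient is that a split rational prime $p$ carries three squarefree ideals of appropriate norm, which is exactly what the multiplicative functions $g,h$ (with $h(p)=p/2$ at split, $p^2$ at inert primes) defined before Lemma \ref{lmm:IkJk} are designed to handle.

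For part (i), Proposition \ref{prop7.8} already reduces the claim to evaluating $\sum_{\vec{\mf{r}}\in\mathscr{D}}\xi(\vec{\mf{r}})^2/N_K(\mf{r}_1\cdots\mf{r}_k)$. Since the factors $\mu(\mf{r}_i)^2$ equal $1$ on $\mathscr{D}$, this equals $C^2\sum_{\vec{\mf{r}}\in\mathscr{D}}F(\tfrac{\log\rad N_K(\mf{r}_1)}{\log R},\ldots,\tfrac{\log\rad N_K(\mf{r}_k)}{\log R})^2/N_K(\mf{r}_1\cdots\mf{r}_k)$ with $C=\prod_{z<\rad N_K(\mf{p})\le R}(1+k/N_K(\mf{p}))^{-1}$. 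Grouping the $\vec{\mf{r}}$ by $n_i:=\rad N_K(\mf{r}_i)$ and using $\sum_{\rad N_K(\mf{t})=n}1/N_K(\mf{t})=1/g(n)$, this sum becomes $\sum_{\vec n}\mu^2(n_1\cdots n_k)F(\tfrac{\log n_i}{\log R})^2/(g(n_1)\cdots g(n_k))$, up to a negligible error from tuples in which some $n_i,n_j$ share a prime — necessarily a prime $>z$, hence necessarily from a split rational prime, so contributing only an $O((\log N)^{-9000k^2})$-type relative error. The $\vec n$-sum is then exactly of the shape governed by Lemma \ref{lmm:MultipleSummationModified} with $r=k$, $\Phi=\Phi_1=\psi^2$, $G=\tilde g^2$: one has $F^2(t_1,\ldots,t_k)=\Phi_1(\sum_i t_i)\prod_i G(t_i)$, the hypotheses on $G$ ($\Omega_G\ll T_k^2$, $r\Omega_G k^2=o(\log R/\log_2 R)$) were verified in the text, and the $W_i$ are taken to be the product of the rational primes below which elements of $\mathscr{D}$ can have no ideal factor. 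This yields $\Pi_g(\log R)^k I(F)$ plus a relative error $\ll r\Omega_G k^2\log_2 R/\log R\ll k^5(\log k)^2(\log_2 N)/\log N$, which is $\ll(\log N)^{-1/99}$ for $k\le(\log N)^{1/9}$. It then remains to check $C^2\Pi_g(\log R)^k=(e^{-\gamma}\log z/\log R)^k(1+o(1))$, a Mertens computation: $C$ contributes $(\log z/\log R)^k$ via $\sum_{z<\rad N_K(\mf{p})\le R}1/N_K(\mf{p})=\log(\log R/\log z)+O(1)$ (using the half-density of split primes from Theorem \ref{cheb}), while $\Pi_g$ supplies the $(\log R/\log z)^k$ together with the $e^{-\gamma}$ through $\prod_{p\le z}(1-1/p)^k\sim(e^{-\gamma}/\log z)^k$, the remaining Euler factors converging.

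For part (ii), Proposition \ref{prop7.9} reduces the claim to $\sum_{\vec{\mf{r}}\in\mathscr{D}}\zeta_1(\vec{\mf{r}})^2/N_K(\mf{r}_1\cdots\mf{r}_k)$ times the prefactor $(|U|/h)(\Li(2N)-\Li(N))/\prod_{\mf p\nmid H,\,\rad N_K(\mf p)\le z}(1-1/N_K(\mf p))$. Using Lemma \ref{inversion1} with $m=1$, $\zeta_1(\vec{\mf{r}})=1_{\mf{r}_1=(1)}\sum_{\mf b}\mu(\mf b)\xi(\mf b,\mf{r}_2,\ldots,\mf{r}_k)/N_K(\mf b)$; the $\mu(\mf b)^2=1$ again, so the inner sum is $\mu(\mf{r}_2)\cdots\mu(\mf{r}_k)\,C\,\prod_{i\ge2}\tilde g(\tfrac{\log\rad N_K(\mf{r}_i)}{\log R})$ times $\sum_{\mf b}\psi(\tfrac{\log\rad N_K(\mf b)}{\log R}+s)\tilde g(\tfrac{\log\rad N_K(\mf b)}{\log R})/N_K(\mf b)$ over squarefree $z$-rough $\mf b$ coprime to $\mf{r}_2\cdots\mf{r}_k$, where $s=\sum_{i\ge2}\tfrac{\log\rad N_K(\mf{r}_i)}{\log R}$. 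Passing to rational integers and using partial summation with the prime ideal theorem, this one-dimensional sum equals $c'_K(\log R/\log z)\,\Psi(s)+O(\cdots)$ with $\Psi(s)=\int_0^\infty\psi(s+u)\tilde g(u)\,du$, so $\Psi^2=\Phi_2$. Squaring $\zeta_1$ and summing over $\mf{r}_2,\ldots,\mf{r}_k$ (with $\mf{r}_1=(1)$, $N_K(\mf{r}_1)=1$), one again passes to integers and applies Lemma \ref{lmm:MultipleSummationModified} with $r=k-1$, $\Phi=\Phi_2$, $G=\tilde g^2$, recognising $J(F)=\int\Phi_2(\sum_{i=2}^k x_i)\prod_{i=2}^k G(x_i)\,d\mathbf x$. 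Collecting the powers of $\log R$ (two from the inner sum, $k-1$ from the outer) and using $\Li(2N)-\Li(N)\sim N/\log N$ together with $\log R\sim(\theta/4)\log N$, the combination $(\log R)^2/\log N\sim(\theta/4)\log R$ supplies both $\theta/4$ and the last power of $\log R$, giving $VN(\theta/4)(\log R)^k J(F)$ times an Euler-factor constant; matching that constant against $(e^{-\gamma}\log z/\log R)^k c_{K,\mf B}$, the $(|U|/h)$ is carried along, the ideal-Mertens product $\prod_{\mf p\nmid H,\rad N_K(\mf p)\le z}(1-1/N_K(\mf p))^{-1}$ differs from $\log z/e^{-\gamma}$ precisely by the bounded factor in the limit defining $c_{K,\mf B}$ (governed by the residue of $\zeta_K$ at $1$) and by the finite correction $\prod_{p\mid\disc(K)N_K(\mf B),\,p\le z}(1-1/p)$ for the primes excluded from $V$ because they divide $H=N_K(\mf B)\disc(K)$.

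The main obstacle is this final bookkeeping of Euler products, carried out uniformly in $k\le(\log N)^{1/9}$: one must verify that the factors produced by the normalising constant $C$, by the (one or two) appeals to Lemma \ref{lmm:MultipleSummationModified} with their respective $\Pi_g$, by the one-dimensional inner sum in part (ii), and by the prefactors of Propositions \ref{prop7.8}--\ref{prop7.9}, collapse — via Mertens' theorem over $\Q$ and the splitting densities of Theorem \ref{cheb} — into exactly $(e^{-\gamma}\log z/\log R)^k$ in part (i) and $(e^{-\gamma}\log z/\log R)^k(\theta/4)c_{K,\mf B}$ in part (ii), with every accumulated error staying $\ll(\log N)^{-1/99}$ relative to the (very small) main term. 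A secondary point requiring care is confirming, as in the displays following \eqref{eq7.19}, that discarding the constraint $\vec{\mf{d}},\vec{\mf{e}}\in\mathscr{E}$ and restricting the $n_i$ to pairwise coprime squarefree rough tuples both cost only errors of this size.
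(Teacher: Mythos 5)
Your proposal is correct and follows essentially the same route as the paper: substitute the explicit $\xi$ built from $F$ into Propositions~\ref{prop7.8} and~\ref{prop7.9}, group ideal tuples by $e_i=\rad N_K(\mf r_i)$ to pass to a rational-integer sum evaluated by Lemma~\ref{lmm:MultipleSummationModified} (with $\Phi_1,G$ for part (i), and, after Lemma~\ref{inversion1} and a one-dimensional inner sum producing $\Phi_2$, with $r=k-1$ for part (ii)), and then collapse the various Euler factors by Mertens and Theorem~\ref{cheb}. Your explicit remark that grouping by $\rad N_K(\mf r_i)$ can lose coprimality at split primes $>z$, and that this contributes only a negligible relative error, is a small point the paper passes over implicitly but is worth having spelled out.
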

\begin{proof}
Observe that if $(p) = \mf{p}_1 \mf{p}_2$ for $p \in \N$ and $r/p < 1/2$,
\begin{align*}
&1 + r\left(\frac{1}{N_{K}(\mf{p}_1)} + \frac{1}{N_{K}(\mf{p}_2)}  + \frac{1}{N_{K}(\mf{p}_1)N_{K}(\mf{p}_2)} \right)	\\
&= \left(1 + \frac{r}{N_{K}(\mf{p}_1)}\right) \left(1 + \frac{r}{N_{K}(\mf{p}_2)} \right) \left(1+  O\left(\frac{r^2}{N_{K}(\mf{p}_1)^2}\right) \right) .
\end{align*}
Let 
\begin{equation*}
\sigma=\prod_{z<p \leq R}(1+k / p)^{-1} = \left(\frac{\log z}{\log R}\right)^k \left(1 + O\left(\frac{k}{\log z}\right) \right).
\end{equation*}
By Proposition \ref{prop7.8}, the definition of $\xi$, Lemma \ref{lmm:MultipleSummationModified} (with $W_i = \prod_{p \leq z} p$ for $1 \leq i \leq r = k$), the fact that $I(F) \gg (2k \log k)^{-k}$ implies $(\log N)^{-Ck^2} = o(I(F)) $ for any fixed $C>0$, and Theorem \ref{cheb}, we have that
\begin{tiny}
\begin{align*}
&\sum_{N < N_{K}(\dc{n}) \leq 2N} w(\dc{n}) \\
 &= VN \sigma^2 \sum_{\vec{\mf{r}} \in \ms{D}} \frac{1}{N_{K}(\mf{r}_1)\dots N_{K}(\mf{r}_k)} \Phi_1\left(\sum_{i=1}^k \frac{\log \rad(N_{K}(\mf{r}_i))}{\log R} \right) \prod_{i=1}^k G\left(\frac{\log \rad(N_{K}(\mf{r}_i))}{\log R}\right) \\
 &\qquad+ O \left(\frac{N}{(\log N)^{9990k^2}}\right)	\\
 &= VN  \sigma^2 \sum_{\substack{\vec{\mf{e}} \in \N^k \\ (e_i,W_i) = 1}} \left(\prod_{i=1}^k \left(\sum_{\rad(N_{K}(\mf{r}_i)) = e_i} \frac{1}{N_{K}(\mf{r}_i)}\right)\right) \\
 &\qquad \Phi_1\left(\sum_{i=1}^k \frac{\log e_i}{\log R} \right) \prod_{i=1}^k G\left(\frac{\log e_i}{\log R}\right)  + O \left(\frac{N}{(\log N)^{9990k^2}}\right)\\
&=   VN \sigma^2 \prod_{p > z}\left(1 + O\left(\frac{k}{p^2}\right)\right) \lim_{n \to \infty}\left( \prod_{\substack{n > p > z \\ (p) = \mf{p}_1 \mf{p_2}}} \left(1 + \frac{2k}{p} \right)  \prod_{n > p > z}  \left(1 - \frac{1}{p}\right)^{k} \right)\prod_{p \leq z}  \left(1 - \frac{1}{p} \right)^k \cdot (\log R)^k I(F) \\
&\qquad \cdot \left(1 + O\left(\frac{1}{\log^{1/99} N}\right) \right)\\
&= VN  \left(e^{-\gamma} \frac{\log z}{\log R}\right)^{k} I(F) \left(1 + O\left(\frac{1}{\log^{1/99} N}\right) \right).  
 \end{align*}
 \end{tiny}
 Let $x_i=\frac{\log(\rad(N_{K}(\mf{r}_i)))}{\log R}$. By Lemma \ref{inversion1}, 
 \begin{small}
\begin{align*}
&\sum_{\vec{\mf{r}} \in \mathscr{D}} \frac{{\zeta}_1^2(\vec{\mf{r}})}{N_{K}(\mf{r}_1) \cdots N_{K}(\mf{r}_k)}\\
&=\sigma^2 \sum_{\substack{\mf{r}_2, \ldots, \mf{r}_k \\ \mu^2\left(\mf{r}_2 \cdots \mf{r}_k\right)=1 \\ P^{-}(N_K\left(\mf{r}_2 \cdots \mf{r}_k\right))>z}} \frac{1}{N_{K}(\mf{r}_2) \cdots N_{K}(\mf{r}_k)} \\
&\qquad \cdot \left(\sum_{\substack{\mu^2\left(\mf{r}_1\right)=1 \\ \vec{\mf{r}} \in \mathscr{D}}} \frac{1}{N_{K}(\mf{r}_1)} F\left(\frac{ \log(\rad(N_{K}(\mf{r}_1)))}{\log R}, \ldots, \frac{\log \rad(N_{K}(\mf{r}_k))}{\log R}\right)\right)^2 \\
&= \left(\frac{\log z}{\log R}\right)^{2k} \\
&\qquad \cdot \sum_{\substack{\mf{r}_2, \ldots, \mf{r}_k \\ \mu^2\left(\mf{r}_2 \cdots \mf{r}_k\right)=1 \\ P^{-}(N_K\left(\mf{r}_2 \cdots \mf{r}_k\right))>z}} \frac{1}{N_{K}(\mf{r}_2) \cdots N_{K}(\mf{r}_k)}\left(e^{-\gamma} \frac{\log R}{\log z}\right)^2 \\
&\qquad \cdot \prod_{i=2}^k G(x_i) \cdot \Phi_2\left(\sum_{i=2}^k x_i \right) \cdot \left(1 + O\left(\frac{1}{\log^{1/99} N}\right) \right)\\
&= (e^{-\gamma})^2 \left(\frac{\log z}{\log R}\right)^{2k-2} \cdot \left(e^{-\gamma} \frac{\log z}{\log R}\right)^{k-1} J(F) \left(1 + O\left(\frac{1}{\log^{1/99} N}\right) \right)\\
&= (e^{-\gamma})^{2} \left(e^{-\gamma}\frac{\log z}{\log R}\right)^{k-1} J(F) \left(1 + O\left(\frac{1}{\log^{1/99} N}\right) \right) .
\end{align*}
\end{small}
Moreover, by Proposition \ref{prop7.9}, we have that
\begin{align*}
\sum_{N<N_{K}(\dc{n}) \leq 2 N} w(\dc{n}) 1_{\dc{n} \text{ prime }}&=\frac{V \cdot (|U|/h) \cdot (\Li(2N) - \Li(N))}{\prod_{\substack{\mf{p} \nmid H \\ \rad(N_K(\mf{p})) \leq z}}(1-1 / N_{K}(\mf{p}))} \\
&\qquad \cdot \sum_{\vec{\mf{r}} \in \mathscr{D}} \frac{{\zeta}_m(\vec{\mf{r}})^2}{N_{K}(\mf{r}_1) \cdots N_{K}(\mf{r}_k)}+O\left(\frac{N}{(\log N)^{40 k^2}}\right).
\end{align*}
Finally, we have that
\begin{align*}
	& (\Li(2N)-\Li(N)) \prod_{\substack{\mf{p} \nmid H \\ \rad(N_K(\mf{p}))\leq z}}\left(1 - \frac{1}{N_{K}(\mf{p})} \right)^{-1} \\
	&\qquad= N \cdot \frac{\theta}{4} c_{K,\mf{B}} \frac{\log z}{\log R} \left(e^{-\gamma}\right)^{-1} \left(1 + O\left(\frac{1}{\log z}\right) \right),
\end{align*}
which completes the proof of the lemma.
\end{proof}

\begin{theorem}\label{thm8.6}[Existence of a good sieve weight]
Let $k \leq \log^{1/9}(x)$ be a positive integer and $\left(\dc{h}_1, \ldots, \dc{h}_k\right)$ an admissible $k$-tuple of distinct elements of $\mc{O}_K$ with $N_K(\dc{h}_i) \leq 2k^2$. Suppose $x$ and $k$ are larger than a suitable absolute constant, and $y$ is defined by \eqref{ydef}, with $c>0$ fixed. Then, there are quantities $\tau$, $u$ satisfying
\begin{equation}\label{eq8.10}
\tau=x^{o(1)}, \quad u \asymp \log k \quad(x \rightarrow \infty), 
\end{equation}
and a non-negative weight function $w^*(\dc{p}, \dc{n})$ defined on $\mathcal{P} \times  \{\dc{z} \in \mc{O}_K: N_{K}(\dc{z}) \leq y\}$ satisfying:
\begin{itemize}
\item
Uniformly for every $\dc{p} \in \mathcal{P}$, one has
\begin{equation}\label{eq8.11}
\sum_{z \in \mc{O}_K} w^*(\dc{p}, \dc{z}) = \tau \frac{y}{\log ^k x} \left(1 + O\left(\frac{1}{\log^{1/99} y}\right) \right).
\end{equation}
\item
Uniformly for every $\dc{q} \in \mathcal{Q}$ and $i=1, \ldots, k$, one has
\begin{equation}\label{eq8.12}
\sum_{\dc{p} \in \mathcal{P}} w^*\left(\dc{p}, \dc{q}-\dc{h}_i \dc{p}\right) = \tau \frac{u}{k} \frac{x / 2}{\log ^k x} \left(1 + O\left(\frac{1}{\log^{1/99} x}\right) \right) .
\end{equation}
\item Uniformly for all $\dc{p} \in \mathcal{P}$ and $\dc{z} \in \mc{O}_K$,
\begin{equation}\label{eq8.13}
w^*(\dc{p}, \dc{z}) \ll x^{o(1)} \quad(x \rightarrow \infty) .
\end{equation}
\end{itemize}
\end{theorem}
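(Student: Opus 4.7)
The plan is, for each fixed $\dc{p} \in \mathcal{P}$, to take $w^*(\dc{p}, \cdot)$ to be the Maynard--Tao sieve weight $w(\cdot)$ from \eqref{wdef} associated to the $k$-tuple of linear forms $L_j^{(\dc{p})}(\dc{z}) := \dc{z} + \dc{h}_j \dc{p}$ ($1 \leq j \leq k$), with $\lambda$ chosen via $F$ from \eqref{eq:FDef} and with the parameters $N := y/2$, $\tilde{N} := y$ (so that $R, D, z, s$ satisfy the constraints of Proposition \ref{prop7.14}); set $w^*(\dc{p}, \dc{z}) = 0$ for $N_K(\dc{z}) \leq y/2$. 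The tuple $(L_j^{(\dc{p})})_j$ is admissible: for a prime ideal $\mf{p}$ coprime to $(\dc{p})$, multiplication by $\dc{p}$ permutes residues modulo $\mf{p}$, so $\{-\dc{h}_j \dc{p} \bmod \mf{p}\}_j$ inherits non-coverage from $\{-\dc{h}_j \bmod \mf{p}\}_j$; for $\mf{p} = (\dc{p})$ the forms collapse to $\dc{z} \equiv 0$ so $\rho(\mf{p}) = 1$. The size bounds $N_K(\dc{a}_j) = 1$ and $N_K(\dc{b}_j) = N_K(\dc{h}_j) N_K(\dc{p}) \leq 2k^2 x \leq N^2$ are immediate.

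Property (i) then follows directly from Proposition \ref{prop7.14}(i). The main term $V^{(\dc{p})} \cdot (y/2) \cdot (e^{-\gamma} \log z / \log R)^k I(F)$ has the desired shape $\tau \cdot y / \log^k x$ once we absorb the combination $V^{(\dc{p})} \cdot (e^{-\gamma} \log z / \log R)^k \asymp 1 / \log^k x$ and $I(F) = x^{-o(1)}$ (using Lemma \ref{lmm:IkJk} together with $k \leq (\log x)^{1/9}$) into $\tau$. Crucially, $V^{(\dc{p})}$ is independent of $\dc{p}$: for $\mf{p}$ coprime to $(\dc{p})$ the count $\rho^{(\dc{p})}(\mf{p})$ equals $|\{\dc{h}_j \bmod \mf{p}\}|$, and primes dividing $(\dc{p})$ have norm at least $\sqrt{x/2}$, which exceeds the sieve cutoff $z \leq (\log N)^{99999 k^2}$. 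Property (iii) is a standard divisor-function bound: for fixed $\dc{z}$, the inner $\vec{\mf{d}}$-sum in \eqref{wdef} runs over $\mf{d}_j \mid L_j^{(\dc{p})}(\dc{z})$ with $|\lambda| \leq 1$, giving at most $\prod_j \tau_K(L_j^{(\dc{p})}(\dc{z})) = x^{o(1)}$ terms; the outer $\mf{t}$-sum obeys the same bound.

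The substantive step is Property (ii). Fix $\dc{q} \in \mathcal{Q}$ and $i \in \{1, \ldots, k\}$ and substitute $\dc{z} = \dc{q} - \dc{h}_i \dc{p}$ into \eqref{wdef}; the forms $\dc{z} + \dc{h}_j \dc{p}$ become $\dc{q} + (\dc{h}_j - \dc{h}_i) \dc{p}$, and at $j = i$ this is simply $\dc{q}$. Since $\dc{q}$ is prime with $N_K(\dc{q}) > x > R$, the divisibility $\mf{d}_i \mid \dc{q}$ forces $\mf{d}_i = (1)$ in every contributing term. Exchanging the order of summation exhibits
\[
\sum_{\dc{p} \in \mathcal{P}} w^*(\dc{p}, \dc{q} - \dc{h}_i \dc{p})
\]
as exactly the sieve weight \eqref{wdef} in the variable $\dc{p}$, for the $k$-tuple consisting of $L_0^{(\dc{q}, i)}(\dc{p}) := \dc{p}$ and $L_j^{(\dc{q}, i)}(\dc{p}) := \dc{q} + (\dc{h}_j - \dc{h}_i)\dc{p}$ ($j \neq i$); the first form satisfies $(\dc{a}_0, \dc{b}_0) = (1, 0)$ as required by Proposition \ref{prop7.9}, and the size bounds give $N_K(\dc{a}_j) \leq 8 k^2$, $N_K(\dc{b}_j) = N_K(\dc{q}) \leq y \leq (x/2)^2$ for $x$ large. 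Admissibility of the derived tuple follows from that of $(\dc{h}_j)_j$ together with the observation that $\dc{q}$, having norm $> x$, is coprime to every prime $\mf{p}$ of norm $\leq z$. Applying Proposition \ref{prop7.14}(ii) with $N' := x/2$ and $\tilde{N}' := y$ then yields \eqref{eq8.12}, with $u/k$ arising from the ratio $J(F)/I(F) \asymp \log k / k$ of Lemma \ref{lmm:IkJk} (times the $\dc{p}, \dc{q}, i$-independent constant $(\theta/4) c_{K, \mf{B}}$), so that $u \asymp \log k$.

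The main obstacle is the consistency of the sieve parameters between the two invocations of Proposition \ref{prop7.14}: the weight $w^*$ is built once with $(R, D, z, s)$ tuned to $N = y/2$, yet must play the role of a valid sieve weight at the scale $N' = x/2$ in property (ii). Since $y = x \cdot (\log x) \log_3 x / \log_2 x = x^{1 + o(1)}$, the ratio $\log y / \log x = 1 + o(1)$, so replacing $\theta$ by $\theta' = \theta(1 + o(1))$ keeps the inequalities $R \in [N'^{\theta'/4 - 2/s}, N'^{\theta'/4 - 1/s}]$ valid with $\theta' < 1$ for $\theta$ bounded below $1$ and $x$ large. A secondary point is that the singular series $V^{(\dc{q}, i)}$ arising in (ii) may disagree with the $V^{(\dc{p})}$ of (i) at finitely many small primes where the $\rho$-values differ; this contributes only a bounded multiplicative factor that gets normalised into the constants $\tau$ and $u$.
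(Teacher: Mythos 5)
Your overall strategy is the same as the paper's (build $w^*$ from the weight \eqref{wdef} with $\lambda$ coming from $F$, evaluate the $\dc{z}$-sum with Proposition \ref{prop7.14}(i), and evaluate the $\dc{p}$-sum by viewing it as a Maynard--Tao sum in the prime variable $\dc{p}$ with forms $\dc{p}$ and $\dc{q}+(\dc{h}_j-\dc{h}_i)\dc{p}$, via Proposition \ref{prop7.14}(ii)). However, there is a genuine gap in how you choose the support of $w^*$ in the second variable, and it is precisely the point where the imaginary quadratic geometry differs from the case $K=\Q$. You declare $w^*(\dc{p},\dc{z})$ to vanish for $N_K(\dc{z})\le y/2$, so that \eqref{eq8.11} follows from a single application of Proposition \ref{prop7.14}(i) over the annulus $y/2<N_K(\dc{z})\le y$. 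But in property (ii) you then substitute $\dc{z}=\dc{q}-\dc{h}_i\dc{p}$ into \eqref{wdef} as if no support restriction were present. Since $N_K(\dc{h}_i\dc{p})\le 2k^2x$ and $k\le(\log x)^{1/9}$, the triangle inequality for the complex absolute value gives $N_K(\dc{q}-\dc{h}_i\dc{p})\le\bigl(\sqrt{N_K(\dc{q})}+k\sqrt{2x}\bigr)^2$, and $k\sqrt{2x}=o(\sqrt{y})$; hence for every $\dc{q}\in\mathcal{Q}$ with, say, $N_K(\dc{q})\le y/4$ (a substantial portion of $\mathcal{Q}$, and \eqref{eq8.12} must hold uniformly for \emph{every} $\dc{q}\in\mathcal{Q}$) one has $N_K(\dc{q}-\dc{h}_i\dc{p})<y/2$ for \emph{all} $\dc{p}\in\mathcal{P}$. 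With your definition the left-hand side of \eqref{eq8.12} is then identically zero, contradicting the positive main term. Conversely, if you drop the support restriction to rescue (ii), then \eqref{eq8.11} is no longer proved: Proposition \ref{prop7.14}(i) evaluates the sum over a single annulus $N<N_K(\dc{z})\le 2N$, not over the ball $N_K(\dc{z})\le y$, and the mass at smaller norms is unaccounted for. Reconciling the region on which the $\dc{z}$-sum can be evaluated asymptotically with the region swept out by $\{\dc{q}-\dc{h}_i\dc{p}:\dc{q}\in\mathcal{Q},\ \dc{p}\in\mathcal{P}\}$ is exactly what the paper's construction is engineered for: there $w^*(\dc{p},\dc{n})$ is defined on a \emph{shifted} annulus $2y<N_K\bigl(\dc{n}+\tfrac12(\sqrt2+2)\sqrt{y}\bigr)\le 4y$, and Proposition \ref{prop7.14}(i) is applied to the translated forms $\dc{m}+(\dc{h}_i\dc{p}-\tfrac12(\sqrt2+2)\sqrt{y})$, so that one annular region serves both (i) and (ii). You instead identify the parameter mismatch between the scales $N=y/2$ and $N'=x/2$ as ``the main obstacle''; that issue is real but minor (as you note, $\log y/\log x=1+o(1)$), whereas the support/geometry issue is the substantive one and is missing from your argument.

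A secondary point: in (ii) you say the singular series for the derived tuple may differ from that in (i) ``at finitely many small primes'' and that this ``bounded multiplicative factor gets normalised into the constants.'' Since $\tau$ and $u$ are fixed once and \eqref{eq8.12} must hold uniformly in $\dc{q}$ and $i$, you need the local factors to agree \emph{exactly} and independently of $\dc{q},i$ for all $\mf{p}$ with $N_K(\mf{p})\le z$; this does hold (because $N_K(\dc{q})>x>z$ and $N_K(\dc{p})>x/2>z^2$, so $\rho(\mf{p})$ reduces to the count of distinct residues of the $\dc{h}_j$ modulo $\mf{p}$, as in the paper), but it should be verified rather than absorbed as an unspecified bounded factor. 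The remaining ingredients of your write-up (admissibility of the two tuples, $\dc{p}$-independence of $V$, the size bounds on $\dc{a}_i,\dc{b}_i$, the divisor-bound argument for \eqref{eq8.13}, and $u\asymp\log k$ from Lemma \ref{lmm:IkJk}) agree with the paper.
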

\begin{proof}
	Fix $F$ such that \[M_k(F)=\frac{k J(F)}{I(F)} \asymp \log k,\]
	which exists by Lemma \ref{lmm:IkJk}. Let
\begin{equation*}
s=\log _2 x, \quad R=x^{\frac{\theta}{4}-\frac{3 / 2}{s}}, \quad D=R^{1 / s}, \quad z=(\log x)^{9999k^2}, \quad \tilde{N} = 4y.
\end{equation*}
Define $\xi$ and $\lambda$ by \eqref{xidef} and the first display in Proposition \ref{prop7.9} respectively. Observe that if $k$ and $F$ are fixed, $\lambda$ depends only on $R$ and $z$.
	For $\dc{p} \in \mathcal{P}$ and $\dc{n} \in \mc{O}_K$ satisfying $2y < N_{K}(\dc{n} + \frac{1}{2}(\sqrt{2} + 2)\sqrt{y}) \leq 4y$ we define
\begin{align*}
	 w^*(\dc{p}, \dc{n})&=\left(\sum_{\substack{\mf{t} \mid\left(n+\dc{h}_1 p\right) \cdots\left(n+\dc{h}_k p\right) \\ (\mf{t}, H)=1}} \mu^{+}(\mf{t})\right)\left(\sum_{\substack{\forall j: \mf{d}_j \mid  n+\dc{h}_j p \\ (\mf{d}_j,H) = 1}} \lambda(\vec{\mf{d}})\right)^2 \\
	 &\quad(2y <N_{K}(\dc{n} + (\sqrt{2}+2)\sqrt{y}/2) \leq 4y),
\end{align*}
We now apply Proposition \ref{prop7.14} (i), with $N = 2y$ and with the forms $\dc{m} + (\dc{h}_i \dc{p} - \frac{1}{2}(\sqrt{2}+2)\sqrt{y})$ for $1 \leq i \leq k$, for $\dc{m}$ with $N_{K}(\dc{m}) \in (N,2N]$ (i.e., $\dc{m} = \dc{n} + \frac{\sqrt{2}+2}{2}\sqrt{y}$). For this set of forms, we have
\[\dc{E}=\dc{p}^{k(k-1) / 2} \prod_{i<j}\left(\dc{h}_j-\dc{h}_i\right).\]
All prime factors of $\dc{E}$ have norm either $\ll \log^{2/9}(x)$ or $> x/2 > R$. Consequently, if $\vec{\mf{d}} \in \ms{D}$ and $(\mf{d}_i,H)=1$ for all $i$, then $\vec{\mf{d}} \in \ms{E}$. Thus, with $\dc{a}_i = 1$ and $\dc{b}_i = \dc{h}_i \dc{p} - \frac{1}{2}(\sqrt{2}+2)\sqrt{y}$, we have $w^*(\dc{p},\dc{n}) = w(\dc{n}+\frac{1}{2}(\sqrt{2}+2)\sqrt{y})$, and we have $N_K(\dc{a}_i), N_K(\dc{b}_i) \leq N^2$. Consequently, Proposition \ref{prop7.14} (1) implies that 
\begin{align*}
&\sum_{2y <N_{K}(n + \frac{\sqrt{2}+2}{2}\sqrt{y} ) \leq 4y} w^*(\dc{p}, \dc{n}) \\
&\qquad=\sum_{N<N_{K}(\dc{m}) \leq 2 N} w(\dc{m}) = 2 y V\left(e^{-\gamma} \frac{\log z}{\log R}\right)^k I(F) \left(1 + O\left(\frac{1}{\log^{1/99} y}\right) \right),
\end{align*}
where
\begin{equation*}
	V=\prod_{\substack{\rad(N_{K}(\mf{p})) \leq z \\ \mf{p} \nmid H}}\left(1-\frac{\rho(\mf{p})}{N_{K}(\mf{p})}\right).
\end{equation*}
For primes $\mf{p}$ with $\rad(N_{K}(\mf{p})) \leq z$, since $N_{K}(\dc{p}) > x/2 > z^2 \geq (\rad(N_{K}(\mf{p})))^2 \geq N_K(\mf{p})$, we observe that 
\begin{align*}
	&\rho(\mf{p})=\#\left\{\dc{n} \pmod{\mf{p}}:\left(\dc{n}+\dc{h}_1 \dc{p}\right) \cdots\left(\dc{n}+\dc{h}_k \dc{p}\right) \equiv 0 \pmod{\mf{p}}\right\} \\
	&= \#\left\{n \pmod{\mf{p}}:\left(\dc{n}+\dc{h}_1\right) \cdots\left(n+\dc{h}_k\right) \equiv 0\pmod{\mf{p}}\right\} 
\end{align*}
is independent of $\dc{p}$. This proves \eqref{eq8.11}, with 
\[
\tau=2 V\left(e^{-\gamma}(\log x) \frac{\log z}{\log R}\right)^k I(F)=x^{o(1)}.
\]
Fix a prime $\dc{q} \in \mc{Q} $ and index $i \in \{1,2,\dots, k\}$. Then, since $\dc{q}$ is a prime of norm $ > z$, we have that
\begin{align*}
&\sum_{\dc{p} \in \mathcal{P}} w^*\left(\dc{p}, \dc{q} - \dc{h}_i \dc{p}\right) \\
&= \sum_{x/2 < N_{K}(\dc{n}) \leq x} 1_{\dc{n} \text{ prime}} \Bigg( \sum_{\substack{\mf{t} \mid \prod_j ( \dc{q} + ( \dc{h}_j - \dc{h}_i ) \dc{n} ) \\ (\mf{t}, H) = 1}} \mu^{+}(\mf{t}) \Bigg) \Bigg( \sum_{\substack{ \mf{d}_j \mid \dc{q} + ( \dc{h}_j - \dc{h}_i) \dc{n}\ \forall j  \\ (\mf{d}_j,H) = 1}} \lambda(\vec{\mf{d}}) \Bigg)^2.
\end{align*}
Note that
\[E=\abs{\prod_{j \neq i}\left(\dc{h}_j-\dc{h}_i\right) \prod_{\substack{j_1<j_2 \\ j_1 \neq i, j_2 \neq i}}\left(\dc{h}_{j_1}-\dc{h}_{j_2}\right) \dc{q}},\]
again has all of its prime factors $\mf{s}$ with $\rad(N_{K}(\mf{s})) > x > R$ or $\ll k^2$. Consequently, if $\vec{\mf{d}} \in \ms{D}$ and $(\mf{d}_j, H)=1$, then $\vec{\mf{d}} \in \ms{E}$. Furthermore, the bounds required in the hypotheses of Proposition \ref{prop7.14} (ii) hold. Consequently, Proposition \ref{prop7.14} (ii) implies that
\begin{small}
\begin{align*}
\sum_{x/2<N_{K}(\dc{n}) \leq x} w(\dc{n}) 1_{n \text { prime }} &= \frac{(x/2)}{k} 2V  \left(e^{-\gamma} \frac{\log z}{\log R}\right)^k I(F)\cdot \frac{\theta}{8} c_{K,\mf{B}} M_k(F) \left(1 + O\left(\frac{1}{\log^{1/99} x}\right) \right),
\end{align*}
\end{small}
which proves \eqref{eq8.11} and \eqref{eq8.13} with 
\begin{align*}
u &= \frac{\theta}{8} c_{K,\mf{B}} M_k(F).
\end{align*}
Our assumption that $M_k(F) \asymp \log k$ implies that $u \asymp \log k$. 
\end{proof}
\section{Two-stage random selection}\label{section-two-stage-selection}
Let $k = \log^{1/9}(x)$, with $x$ and $k$ sufficiently large to satisfy the hypotheses of Theorem \ref{thm8.6}. Let $\dc{h}_1, \dots \dc{h}_k$ be a $k$-tuple with $N_K(\dc{h}_i) \leq 2k^2$. Define $s, R, D, z, \tilde{N}$ as in Theorem \ref{thm8.6}, and let $\tau, u$ be the quantities guaranteed by the theorem. Finally, let $x,y,z_0$ be defined as in Section \ref{section-rankin}.

For each prime ideal $\mf{s} \in \mc{S}$, we select the residue class $\mathbf{a}_{\mf{s}} \pmod{\mf{s}}$ uniformly at random from $\mc{O}_K/\mf{s}$. Define $\vec{\mathbf{a}} := (\mathbf{a}_{\mf{s}})_{\mf{s} \in \mc{S}}$.

The set $S(\vec{\mathbf{a}})$ is a random subset of $\mc{O}_K$, with each element surviving with probability 
\begin{equation}\label{sigmadef}
	\sigma:=\prod_{\mf{s} \in \mathcal{S}}\left(1-\frac{1}{N_{K}(\mf{s})}\right)=\prod_{\log ^{20} x<N_{K}(\mf{s}) \leq z_0}\left(1-\frac{1}{N_{K}(\mf{s})}\right) .
\end{equation}

Note that by Theorem \ref{landauprimeideal},
\begin{equation*}
\sigma = \frac{\log \left(\log ^{20} x\right)}{\log z_0}\left(1 + \frac{1}{\log_2^{20}(x)} \right)=\frac{100\left(\log _2 x\right)^2}{\log x \log _3 x} \left(1 + \frac{1}{\log_2^{20}(x)} \right).	
\end{equation*}
and similarly,
\begin{equation*}
\mathbb{E}|\mathcal{Q} \cap S(\overrightarrow{\mathbf{a}})|=\sum_{\dc{q} \in \mathcal{Q}} \mathbb{P}(\dc{q} \in S(\overrightarrow{\mathbf{a}}))=\sigma|\mathcal{Q}| = 100 c \frac{x}{\log x}\log_2(x)	 \left(1 + \frac{1}{\log_2^{20}(x)} \right).
\end{equation*}
The following two results follow in exactly the same manner as the corresponding results (Lemma 6.1 and Corollary 5) in \cite{fgkmt}:
\begin{lemma}\label{lemma8.4}
Let $t \leq \log x$, and let $\dc{n}_1, \ldots, \dc{n}_t$ be distinct elements of $\mc{O}_K$ with norm in the interval $\left[-x^2, x^2\right]$. Then

$$
\mathbb{P}\left(n_1, \ldots, n_t \in S(\overrightarrow{\mathbf{a}})\right)=\left(1+O\left(\frac{1}{\log ^{16} x}\right)\right) \sigma^t
$$
\end{lemma}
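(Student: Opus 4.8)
The plan is to use the independence of the random residue classes $\mathbf{a}_{\mf{s}}$, $\mf{s}\in\cS$, to write the probability as an Euler product over $\cS$, and then to compare that product term by term with $\sigma^t=\prod_{\mf{s}\in\cS}(1-1/N_K(\mf{s}))^t$. Since $\dc{n}\in S(\vec{\mathbf{a}})$ holds exactly when $\dc{n}\not\equiv\mathbf{a}_{\mf{s}}\bmod\mf{s}$ for every $\mf{s}\in\cS$, and the $\mathbf{a}_{\mf{s}}$ are chosen independently and uniformly in $\mc{O}_K/\mf{s}$, one has
\[
\mathbb{P}\bigl(\dc{n}_1,\dots,\dc{n}_t\in S(\vec{\mathbf{a}})\bigr)=\prod_{\mf{s}\in\cS}\mathbb{P}\bigl(\dc{n}_i\not\equiv\mathbf{a}_{\mf{s}}\bmod\mf{s}\ \ \forall i\bigr)=\prod_{\mf{s}\in\cS}\Bigl(1-\frac{\nu_{\mf{s}}}{N_K(\mf{s})}\Bigr),
\]
where $\nu_{\mf{s}}:=\#\{\dc{n}_i\bmod\mf{s}:1\le i\le t\}\in\{1,\dots,t\}$ is the number of distinct residues of the $\dc{n}_i$ modulo $\mf{s}$, using that $\mc{O}_K/\mf{s}$ has exactly $N_K(\mf{s})$ elements.

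Next I would split $\cS$ into the \emph{generic} primes, where $\nu_{\mf{s}}=t$, and the \emph{special} primes, where $\nu_{\mf{s}}<t$. A prime $\mf{s}$ is special only if $\mf{s}\mid(\dc{n}_i-\dc{n}_j)$ for some $i\ne j$. Since the $\dc{n}_i$ are distinct with $N_K(\dc{n}_i)\le x^2$, each $\dc{n}_i-\dc{n}_j$ is a nonzero element of $\mc{O}_K$ with $N_K(\dc{n}_i-\dc{n}_j)\ll x^2$ (using $N_K=|\cdot|_{\C}^2$ and the triangle inequality $|\dc{n}_i-\dc{n}_j|_{\C}\le|\dc{n}_i|_{\C}+|\dc{n}_j|_{\C}$), so $\prod_{i<j}(\dc{n}_i-\dc{n}_j)$ generates an ideal of norm $x^{O(t^2)}$. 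Hence the number of special primes is at most the number of its prime-ideal divisors of norm $>\log^{20}x$, which is $\ll t^2\log x/\log(\log^{20}x)\ll t^2\log x/\log_2 x$.

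For each generic $\mf{s}$, setting $m=N_K(\mf{s})>\log^{20}x>2t$, the expansions $\log(1-t/m)=-t/m+O(t^2/m^2)$ and $t\log(1-1/m)=-t/m+O(t/m^2)$ give $(1-t/m)/(1-1/m)^t=1+O(t^2/m^2)$; since $\sum_{N_K(\mf{s})>\log^{20}x}N_K(\mf{s})^{-2}\ll\log^{-20}x$ (at most two prime ideals lie over each rational prime), the product of these ratios over all generic $\mf{s}$ is $1+O(t^2\log^{-20}x)=1+O(\log^{-18}x)$, using $t\le\log x$. For each special $\mf{s}$, both $1-\nu_{\mf{s}}/N_K(\mf{s})$ and $(1-1/N_K(\mf{s}))^t$ are $1+O(t/\log^{20}x)$ because $1\le\nu_{\mf{s}}\le t<\log^{20}x<N_K(\mf{s})$, so their ratio is $1+O(t/\log^{20}x)$; multiplying over the $\ll t^2\log x/\log_2 x$ special primes contributes $\exp\bigl(O(t^3\log x/(\log_2 x\cdot\log^{20}x))\bigr)=1+O(\log^{-16}x)$, again using $t\le\log x$. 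Multiplying the generic and special contributions by $\sigma^t$ yields $\mathbb{P}(\dc{n}_1,\dots,\dc{n}_t\in S(\vec{\mathbf{a}}))=(1+O(\log^{-16}x))\sigma^t$.

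The main obstacle is arranging the error terms to fit within $O(\log^{-16}x)$: comparing each factor $1-\nu_{\mf{s}}/N_K(\mf{s})$ directly to $1$ is far too lossy, since $|\cS|\asymp z_0/\log z_0$ is enormous, so the argument relies on the quadratic cancellation $(1-t/m)/(1-1/m)^t=1+O(t^2/m^2)$ for the bulk (generic) primes, together with the crude divisor bound $\ll t^2\log x/\log_2 x$ on the number of special primes; it is in balancing these two estimates that the hypothesis $t\le\log x$ is used. This is the only place where the argument differs in substance from \cite{fgkmt}*{Lemma 6.1}, the passage from $\Z$ to $\mc{O}_K$ only affecting the two-prime-ideals-per-rational-prime bookkeeping above.
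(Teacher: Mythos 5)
Your proof is correct and is essentially the argument the paper intends: the paper does not write out a proof but simply states that the result ``follow[s] in exactly the same manner'' as Lemma 6.1 of \cite{fgkmt}, and your argument --- factoring the probability as $\prod_{\mf s\in\cS}(1-\nu_{\mf s}/N_K(\mf s))$ by independence, splitting into generic primes (where $\nu_{\mf s}=t$ and the quadratic cancellation $(1-t/m)/(1-1/m)^t=1+O(t^2/m^2)$ applies) versus the $\ll t^2\log x/\log_2 x$ special primes dividing $\prod_{i<j}(\dc n_i-\dc n_j)$, then using $t\le\log x$ and $N_K(\mf s)>\log^{20}x$ to control both error terms --- is exactly that argument transposed to $\mc O_K$, with the correct number-field bookkeeping ($|\mc O_K/\mf s|=N_K(\mf s)$, at most two primes over each rational prime, and $N_K(\dc n_i-\dc n_j)\ll x^2$ via the archimedean triangle inequality).
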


\begin{corollary}\label{corollary8.5}
With probability $\geqslant 1-O\left(1 / \log ^8 x\right)$, we have

$$
|\mathcal{Q} \cap S(\overrightarrow{\mathbf{a}})|=\left(1+O\left(\frac{1}{\log ^4 x}\right)\right) \sigma|\mathcal{Q}| = 100 c \frac{x}{\log x}\log_2(x)	 \left(1 + \frac{1}{\log_2^{20}(x)} \right).
$$

\end{corollary}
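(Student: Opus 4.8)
The plan is to run the standard second-moment (Chebyshev) argument, exactly as for Lemma~6.1 and Corollary~5 of \cite{fgkmt}. Write $X = |\mathcal{Q} \cap S(\vec{\mathbf{a}})|$ for the random variable in question. Every $\dc{q} \in \mathcal{Q}$ has $N_K(\dc{q}) \in (x,y]$, and since $y = \lfloor c x (\log x)(\log_3 x)/\log_2 x \rfloor \leq x^2$ for $x$ large, the elements of $\mathcal{Q}$ are legitimate inputs to Lemma~\ref{lemma8.4}. Applying that lemma with $t=1$ to each $\dc{q} \in \mathcal{Q}$ and summing gives $\mathbb{E}[X] = \sum_{\dc{q} \in \mathcal{Q}} \mathbb{P}(\dc{q} \in S(\vec{\mathbf{a}})) = (1 + O(1/\log^{16} x))\,\sigma|\mathcal{Q}|$, which is the mean already recorded in the display preceding the statement.

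For the second moment, I would expand $X^2 = \sum_{\dc{q}_1, \dc{q}_2 \in \mathcal{Q}} 1_{\dc{q}_1 \in S(\vec{\mathbf{a}})}\, 1_{\dc{q}_2 \in S(\vec{\mathbf{a}})}$, separate the diagonal $\dc{q}_1 = \dc{q}_2$ (which contributes exactly $\mathbb{E}[X]$), and apply Lemma~\ref{lemma8.4} with $t=2$ to the $|\mathcal{Q}|^2 - |\mathcal{Q}|$ off-diagonal pairs, each of which satisfies $\mathbb{P}(\dc{q}_1, \dc{q}_2 \in S(\vec{\mathbf{a}})) = (1 + O(1/\log^{16} x))\sigma^2$. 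This yields $\mathbb{E}[X^2] = \mathbb{E}[X] + (1 + O(1/\log^{16} x))\sigma^2(|\mathcal{Q}|^2 - |\mathcal{Q}|)$. Subtracting $(\mathbb{E}[X])^2 = (1 + O(1/\log^{16} x))\sigma^2|\mathcal{Q}|^2$, the two main terms of size $\sigma^2|\mathcal{Q}|^2$ cancel up to their $O(1/\log^{16} x)$ relative errors, and since $\mathbb{E}[X] \ll \sigma|\mathcal{Q}|$ one is left with $\Var(X) \ll \sigma|\mathcal{Q}| + \sigma^2|\mathcal{Q}|^2/\log^{16} x$.

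Finally, Chebyshev's inequality gives $\mathbb{P}(|X - \mathbb{E}[X]| \geq \mathbb{E}[X]/\log^4 x) \leq \Var(X)(\log^8 x)/(\mathbb{E}[X])^2 \ll (\log^8 x)(1/(\sigma|\mathcal{Q}|) + 1/\log^{16} x)$, using $\mathbb{E}[X] \asymp \sigma|\mathcal{Q}|$. The only point needing any thought is the domination check: since $\sigma|\mathcal{Q}| \asymp (x/\log x)\log_2 x$ is essentially of size $x$ and thus far exceeds $\log^{16} x$, the right-hand side is $O(1/\log^8 x)$ — there is no substantive obstacle here. On the complementary event, of probability $\geq 1 - O(1/\log^8 x)$, we then have $X = (1 + O(1/\log^4 x))\mathbb{E}[X] = (1 + O(1/\log^4 x))\sigma|\mathcal{Q}|$, and substituting the evaluation $\sigma|\mathcal{Q}| = 100 c\,(x/\log x)\log_2 x\,(1 + O(1/\log_2^{20} x))$ from the displays above (and noting $1/\log^4 x = o(1/\log_2^{20} x)$, so the error terms merge) completes the proof.
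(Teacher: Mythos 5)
Your proposal is correct and follows exactly the approach the paper intends: the paper states Corollary \ref{corollary8.5} follows ``in exactly the same manner as'' Corollary~5 of \cite{fgkmt}, and that is precisely the Chebyshev second-moment argument you spell out — computing $\mathbb{E}[X]$ and $\mathbb{E}[X^2]$ from Lemma~\ref{lemma8.4} with $t=1$ and $t=2$, deducing $\Var(X) \ll \sigma|\mathcal{Q}| + \sigma^2|\mathcal{Q}|^2/\log^{16}x$, and applying Chebyshev with threshold $\mathbb{E}[X]/\log^4 x$. The domination checks you flag (that $y\le x^2$ so Lemma~\ref{lemma8.4} applies, and that $\sigma|\mathcal{Q}|\gg x^{0.9}$ dwarfs $\log^{16}x$) are the only points requiring care, and you handle them correctly.
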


\section{Probability weights}\label{section-probability-weights}
For each $\dc{p} \in \mc{P}$, let $\tilde{\mathbf{n}}_{\mf{p}}$ denote the random element of $\mc{O}_K$ with probability density
\begin{equation}\label{tildedensity}
	\mathbb{P}\left(\tilde{\mathbf{n}}_{\dc{p}}=\dc{n}\right):=\frac{w^*(\dc{p}, \dc{n})}{\sum_{\dc{n}^{\prime} \in \mc{O}_K} w^*\left(\dc{p}, \dc{n}^{\prime}\right)} \quad(N_{K}(\dc{n}) \leq y)
\end{equation}
Consider
\begin{equation}\label{xpdef}
	X_{\dc{p}}(\vec{\dc{a}}):=\mathbb{P}\left(\tilde{\mathbf{n}}_{\dc{p}}+\dc{h}_i \dc{p} \in S(\vec{\dc{a}}) \text { for all } i=1, \ldots, k\right),
\end{equation}
Let 
\begin{equation}\label{padef}
	\mc{P}(\vec{\dc{a}}) = \{\dc{p} \in \mc{P}: \left|X_{\dc{p}}(\vec{\dc{a}})-\sigma^k\right| \leq \frac{\sigma^k}{\log ^3 x}\} .
\end{equation}
Suppose that we are in the event $\vec{\mathbf{a}} = \vec{\dc{a}}$. If $\dc{p} \in \mc{P} \setminus \mc{P}(\vec{\dc{a}})$, we then set $\mathbf{n}_{\dc{p}}=0$. Otherwise, if $\dc{p} \in \mc{P}(\vec{\dc{a}})$, then we let
\begin{equation}
	Z_{\dc{p}}(\vec{\dc{a}} ; \dc{n}):= \begin{cases}\mathbb{P}\left(\tilde{\mathbf{n}}_{\dc{p}}=\dc{n}\right) & \text { if } \dc{n}+\dc{h}_j \dc{p} \in S(\vec{\dc{a}}) \text { for } j=1, \ldots, k \\ 0 & \text { otherwise }\end{cases}
\end{equation}
and let $\mathbf{n}_{\dc{p}}$ be the random element of $\mc{O}_K$ with conditional probability distribution
\begin{equation}\label{cond}
	\mathbb{P}\left(\mathbf{n}_{\dc{p}}=\dc{n} \mid \overrightarrow{\mathbf{a}}=\vec{\dc{a}}\right):=\frac{Z_{\dc{p}}(\vec{\dc{a}} ; \dc{n})}{X_{\dc{p}}(\vec{\dc{a}})}
\end{equation}
Finally, we define
\begin{equation}\label{epdef}
	\vec{\mathbf{e}}_{p}(\vec{\dc{a}}):=\{\mathbf{n}_{\dc{p}}+\dc{h}_i \dc{p} : 	1 \leq i \leq k\} \cap \mathcal{Q} \cap S(\vec{\dc{a}})
\end{equation}

We require the following result, which is Lemma \cite{fgkmt}{Lemma 6.3}. The same proof 	applies (word-for-word):
\begin{lemma}[Lemma 6.3, \cite{fgkmt}]\label{lemma8.8}
\[\mathbb{E}|\mathcal{P}(\overrightarrow{\mathbf{a}})|=|\mathcal{P}|+O\left(\frac{x}{(\log x)^{11}}\right)=|\mathcal{P}|\left(1+O\left(\frac{1}{\log ^{10} x}\right)\right).\]
\end{lemma}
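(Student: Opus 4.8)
The plan is to reduce the statement to a per-prime concentration estimate: for each fixed $\dc{p} \in \mathcal{P}$ I will show that $\PR\bigl(\dc{p} \notin \mathcal{P}(\overrightarrow{\mathbf{a}})\bigr) \ll 1/\log^{10} x$, where $\mathcal{P}(\overrightarrow{\mathbf{a}})$ is defined by \eqref{padef}. Summing over $\dc{p} \in \mathcal{P}$, using $\mathcal{P}(\overrightarrow{\mathbf{a}}) \subseteq \mathcal{P}$ and $|\mathcal{P}| \asymp x/\log x$ (which follows from Theorem \ref{landauprimeideal}), this yields $\E|\mathcal{P}(\overrightarrow{\mathbf{a}})| = |\mathcal{P}| + O(x/\log^{11}x) = |\mathcal{P}|(1 + O(1/\log^{10}x))$. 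The concentration estimate is a second-moment argument. First I would write $X_{\dc{p}}(\vec{\dc{a}}) = \sum_{\dc{n}} \PR(\tilde{\mathbf{n}}_{\dc{p}} = \dc{n})\, \mathbf 1\bigl[\dc{n} + \dc{h}_i\dc{p} \in S(\vec{\dc{a}})\ \text{for all } i\bigr]$ from \eqref{xpdef} and average over $\overrightarrow{\mathbf{a}}$. For each $\dc{n}$ in the support of $\tilde{\mathbf{n}}_{\dc{p}}$ the $k$ points $\dc{n} + \dc{h}_i\dc{p}$ are distinct (since the $\dc{h}_i$ are) and have norm $\ll y + k^2 x \le x^2$, so Lemma \ref{lemma8.4} with $t = k \le \log x$ gives $\PR(\dc{n}+\dc{h}_i\dc{p} \in S(\overrightarrow{\mathbf{a}})\ \text{for all } i) = (1 + O(1/\log^{16}x))\sigma^k$ uniformly in $\dc{n}$, hence $\E X_{\dc{p}}(\overrightarrow{\mathbf{a}}) = (1+O(1/\log^{16}x))\sigma^k$.

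For the second moment I would expand $X_{\dc{p}}(\vec{\dc{a}})^2$ as a double sum over $\dc{n}, \dc{n}'$ and average over $\overrightarrow{\mathbf{a}}$. The generic pairs, for which the $2k$ shifts $\dc{n}+\dc{h}_i\dc{p}$ and $\dc{n}'+\dc{h}_i\dc{p}$ are all distinct, contribute $(1+O(1/\log^{16}x))\sigma^{2k}$ by Lemma \ref{lemma8.4} with $t = 2k \le \log x$. The remaining diagonal pairs satisfy $\dc{n} - \dc{n}' = (\dc{h}_j - \dc{h}_i)\dc{p}$ for some $i \neq j$; for each of the $\ll k^2$ such relations the difference $\dc{n}-\dc{n}'$ is pinned down, and combining the pointwise bound $\PR(\tilde{\mathbf{n}}_{\dc{p}} = \dc{n}) \ll x^{o(1)}/y$ — which follows from \eqref{eq8.11} and \eqref{eq8.13} since $\tau = x^{o(1)}$ and $\log^k x = x^{o(1)}$ for $k = \log^{1/9}x$ — with $\sum_{\dc{n}} \PR(\tilde{\mathbf{n}}_{\dc{p}} = \dc{n}) = 1$ and $\sigma < 1$ bounds their total contribution by $\ll k^2 x^{o(1)} \sigma^{2k-1}/y = o(\sigma^{2k}/\log^{16}x)$. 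Therefore $\E X_{\dc{p}}(\overrightarrow{\mathbf{a}})^2 = (1+O(1/\log^{16}x))\sigma^{2k}$, so $\Var X_{\dc{p}}(\overrightarrow{\mathbf{a}}) \ll \sigma^{2k}/\log^{16}x$.

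Finally I would apply Chebyshev's inequality. Since $|\E X_{\dc{p}}(\overrightarrow{\mathbf{a}}) - \sigma^k| \ll \sigma^k/\log^{16}x \le \sigma^k/(2\log^3 x)$ for $x$ large, the event $|X_{\dc{p}}(\overrightarrow{\mathbf{a}}) - \sigma^k| > \sigma^k/\log^3 x$ that defines $\dc{p}\notin\mathcal{P}(\overrightarrow{\mathbf{a}})$ forces $|X_{\dc{p}}(\overrightarrow{\mathbf{a}}) - \E X_{\dc{p}}(\overrightarrow{\mathbf{a}})| > \sigma^k/(2\log^3 x)$, which by Chebyshev has probability $\ll (\log^6 x)\,\Var X_{\dc{p}}(\overrightarrow{\mathbf{a}})/\sigma^{2k} \ll 1/\log^{10}x$. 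Summing over $\mathcal{P}$ then gives $\E|\mathcal{P}\setminus\mathcal{P}(\overrightarrow{\mathbf{a}})| \ll |\mathcal{P}|/\log^{10}x \ll x/\log^{11}x$ and completes the argument. The main obstacle is the second-moment computation — isolating the diagonal terms and showing they are negligible, and checking that throughout all relevant norms remain $\le x^2$ and all point-counts remain $\le \log x$ so that Lemma \ref{lemma8.4} applies; everything else is the routine Chebyshev concentration argument, carried out verbatim as in \cite{fgkmt}*{Lemma~6.3}.
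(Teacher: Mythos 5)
Your approach — a per-prime second-moment/Chebyshev concentration argument, summed over $\mathcal{P}$ — is exactly the one the paper invokes by citing \cite{fgkmt}*{Lemma~6.3} ``word-for-word,'' and the computations you give (first moment via Lemma~\ref{lemma8.4} with $t=k$, second moment via $t=2k$, pointwise bound $\PR(\tilde{\mathbf{n}}_{\dc{p}}=\dc{n}) \ll x^{o(1)}/y$ from \eqref{eq8.11} and \eqref{eq8.13}, then Chebyshev and summation) are all sound; the norm checks $N_K(\dc{n}+\dc{h}_i\dc{p}) \ll y + k^2 x \le x^2$ and $2k \le \log x$ are also correct.

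One small imprecision: you describe the non-generic pairs in the second moment as exactly those with $\dc{n}-\dc{n}'=(\dc{h}_j-\dc{h}_i)\dc{p}$ for some $i\neq j$, but the case $\dc{n}=\dc{n}'$ is also non-generic (every shift then appears twice, so only $k$ of the $2k$ values are distinct, and Lemma~\ref{lemma8.4} applies with $t=k$, giving $\sigma^k$ rather than $\sigma^{2k}$). This case is not covered by your stated relation since $(\dc{h}_j-\dc{h}_i)\dc{p}\neq 0$ when $i\neq j$. However, the same pointwise bound you already use handles it: $\sum_{\dc{n}}\PR(\tilde{\mathbf{n}}_{\dc{p}}=\dc{n})^2\cdot\sigma^k \ll x^{o(1)}\sigma^k/y$, and since $\sigma^{-k}=x^{o(1)}$ and $y \gg x$, this is still $o(\sigma^{2k}/\log^{16}x)$. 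So the omission is easily repaired and does not affect the conclusion; with it restored, your proof is complete.
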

The main result of this section is the following:
\begin{lemma}\label{smc-2}  
With probability $1-o(1)$, we have
\begin{equation}\label{sumno}
\sigma^{-r} \sum_{i=1}^r \sum_{\dc{p}\in \PP(\vec{\mathbf{a}})} 
Z_{\dc{p}}(\vec{\mathbf{a}};\dc{q}-\dc{h}_i \dc{p}) = \left(1 + O\left(\frac{1}{\log^{3}_2 x}\right)\right) \frac{u}{\sigma} \frac{x}{2y}
\end{equation}
for all but at most $\frac{x}{2\log x \log_2 x}$ of the primes $\dc{q} \in \QQ \cap S(\vec{\mathbf{a}})$.
\end{lemma}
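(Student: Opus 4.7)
The plan is to perform a first- and second-moment analysis of the random variable
\[
Y_{\dc{q}}(\vec{\mathbf{a}}) := \sigma^{-r}\sum_{i=1}^r \sum_{\dc{p} \in \mc{P}(\vec{\mathbf{a}})} Z_{\dc{p}}(\vec{\mathbf{a}}; \dc{q} - \dc{h}_i \dc{p}),
\]
conditioned on $\{\dc{q} \in S(\vec{\mathbf{a}})\}$, for each fixed $\dc{q} \in \mc{Q}$, showing that $Y_{\dc{q}}$ concentrates near $M := \frac{u}{\sigma}\frac{x}{2y}$. A union bound via Markov's inequality will then control the number of ``bad'' primes $\dc{q}$ where the estimate fails. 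As a preliminary step, I would extend the inner sum from $\mc{P}(\vec{\mathbf{a}})$ to all of $\mc{P}$ by setting $\tilde{Z}_{\dc{p}}(\vec{\dc{a}}; \dc{n}) := \mathbb{P}(\tilde{\mathbf{n}}_{\dc{p}} = \dc{n}) \cdot \mathbf{1}_{\dc{n} + \dc{h}_j \dc{p} \in S(\vec{\dc{a}})\ \forall j}$; combining the trivial bound $\tilde{Z}_{\dc{p}} \ll x^{o(1)}/y$ derivable from \eqref{eq8.11} and \eqref{eq8.13} with Lemma~\ref{lemma8.8} shows this replacement changes the answer by an amount negligible compared to $M/\log_2^3 x$, on an event of probability $1-o(1)$.

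For the first moment, factorize $\tilde{Z}_{\dc{p}}(\vec{\mathbf{a}}; \dc{q} - \dc{h}_i \dc{p}) = P_{\dc{p},i}(\dc{q}) \cdot I_{\dc{p},i,\dc{q}}(\vec{\mathbf{a}})$, where $P_{\dc{p},i}(\dc{q})$ is deterministic and $I_{\dc{p},i,\dc{q}}$ is the sieving indicator for $\{\dc{q} + (\dc{h}_j - \dc{h}_i)\dc{p} \in S(\vec{\mathbf{a}}) : 1 \le j \le k\}$. This condition (together with $\dc{q} \in S(\vec{\mathbf{a}})$) involves $k$ distinct elements of norm $\ll x^2$, so Lemma~\ref{lemma8.4} yields $\mathbb{E}[I_{\dc{p},i,\dc{q}} \mathbf{1}_{\dc{q}\in S}] = \sigma^k(1 + O(\log^{-16} x))$ uniformly. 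Combining this with \eqref{eq8.11}--\eqref{eq8.12} to evaluate $A(\dc{q}) := \sum_{i,\dc{p}} P_{\dc{p},i}(\dc{q}) = \frac{ux}{2y}(1 + \bar\epsilon(\dc{q}))$ with $\bar\epsilon(\dc{q}) = O(\log^{-1/99} x)$, and taking $r = k$, produces $\mathbb{E}[Y_{\dc{q}} \mid \dc{q} \in S] = M(1 + \bar\epsilon(\dc{q}))(1 + O(\log^{-16} x))$. For the second moment, the analogous expansion over quadruples $(i, i', \dc{p}, \dc{p}')$ combined with Lemma~\ref{lemma8.4} applied to the $2k-1$ distinct elements (with $\dc{q}$ as the sole shared element; genuine coincidences require $\dc{p} = \dc{p}'$ by the size disparity between primes in $\mc{P}$ and the shifts $\dc{h}_j - \dc{h}_i$) and a routine bound on the diagonal $\dc{p} = \dc{p}'$ gives $\mathbb{E}[Y_{\dc{q}}^2 \mid \dc{q} \in S] = M^2(1 + \bar\epsilon(\dc{q}))^2(1 + O(\log^{-16} x))$.

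The crucial observation is that the $\log^{-1/99} x$ error from Theorem~\ref{thm8.6} enters both moments through the \emph{identical} factor $A(\dc{q})$, so that $(1 + \bar\epsilon(\dc{q}))^2$ appears in the second moment and in the square of the first moment, and therefore cancels in the variance:
\[
\Var(Y_{\dc{q}} \mid \dc{q} \in S) = M^2(1 + \bar\epsilon(\dc{q}))^2 \cdot O(\log^{-16} x) = O(M^2/\log^{16} x).
\]
Since the systematic bias $M|\bar\epsilon(\dc{q})| \ll M \log^{-1/99} x$ is already far smaller than the allowed tolerance $M/\log_2^3 x$, Chebyshev's inequality yields $\mathbb{P}(\dc{q} \text{ is bad} \mid \dc{q} \in S) \ll \log_2^6 x/\log^{16} x$. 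The expected number of bad primes in $\mc{Q} \cap S(\vec{\mathbf{a}})$ is therefore $\ll \sigma |\mc{Q}| \log_2^6 x/\log^{16} x = o(x/(\log x \log_2 x))$, and Markov's inequality produces the claim with probability $1 - o(1)$. The main obstacle is recognizing and exploiting the structural cancellation of the first-order Theorem~\ref{thm8.6} errors between the two moments; the direct bound $\Var \ll M^2 \log^{-1/99} x$ would be too weak for the Markov step, so it is essential that only the much stronger Lemma~\ref{lemma8.4} savings survives in the variance.
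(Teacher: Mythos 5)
The core of your argument is sound and is essentially a disaggregated version of the paper's: the paper computes $\E \sum_{\dc{q} \in \QQ \cap S(\vec{\mathbf{a}})} \bigl(F(\dc{q};\vec{\mathbf{a}})-\frac{xu}{2\sigma y}\bigr)^2$ in one go via the same two ingredients (Lemma \ref{lemma8.4} for the $k$, resp.\ $2k-1$, distinct shifted elements, and \eqref{eq8.11}--\eqref{eq8.12} for the deterministic factor), then applies Markov once, whereas you condition on $\dc{q}\in S(\vec{\mathbf{a}})$ for each fixed $\dc{q}$, apply Chebyshev per $\dc{q}$, and then Markov over the count of bad $\dc{q}$; either route works. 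The genuine gap is your preliminary step replacing $\PP(\vec{\mathbf{a}})$ by $\PP$. The bound ``$\tilde Z_{\dc{p}} \ll x^{o(1)}/y$ times the Lemma \ref{lemma8.8} count of $\dc{p}\notin\PP(\vec{\mathbf{a}})$'' is quantitatively hopeless: the quantity in \eqref{sumno} carries the normalization $\sigma^{-r}$ (and \eqref{tildedensity} carries a factor $\log^k x/\tau$), and with $k=\lfloor\log^{1/9}x\rfloor$ one has $\sigma^{-k}=\exp\bigl((1+o(1))k\log_2 x\bigr)$, which exceeds every fixed power of $\log x$, while Lemma \ref{lemma8.8} plus Markov only yields $|\PP\setminus\PP(\vec{\mathbf{a}})|\ll x/\log^{A}x$ for some fixed $A$ on a high-probability event. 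Your discarded contribution is therefore bounded only by something of rough size $\exp\bigl(c'\log^{1/9}x\,\log_2 x\bigr)/\log^{A}x$ per $\dc{q}$, whereas it must be $\ll \frac{u}{\sigma}\frac{x}{2y}\cdot\frac{1}{\log_2^3 x}\asymp \frac{1}{c\log_2^{3}x}$. The $\sigma^{-k}$ can only be cancelled by exploiting the sieve indicator inside $Z_{\dc{p}}$, i.e.\ the identity $\sum_{\dc{n}} Z_{\dc{p}}(\vec{\dc{a}};\dc{n})=X_{\dc{p}}(\vec{\dc{a}})$ together with a Cauchy--Schwarz bound of the shape $\E\bigl[X_{\dc{p}}\mathbf{1}_{\dc{p}\notin\PP(\vec{\mathbf{a}})}\bigr]\ll \sigma^{k}\log^{-5}x$; this is the argument of Lemma 6.2 of \cite{fgkmt}, which the paper invokes to get \eqref{soo-2}, and it is intrinsically an average over all $\dc{n}$ (equivalently over $\dc{q}$), converted by Markov into an exceptional set of at most $\frac{x}{4\log x\log_2 x}$ primes $\dc{q}$ --- not the uniform per-$\dc{q}$ negligibility your phrasing asserts.

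A smaller point: the ``crucial cancellation'' of the $\log^{-1/99}x$ errors between the two moments is real (they enter both through the same factor $A(\dc{q})$), but it is not needed, and your closing claim that a variance bound $\ll M^2(\log x)^{-1/99}$ would be too weak for the Markov step is a miscalculation: $(\log x)^{-1/99}=\exp(-\tfrac{1}{99}\log_2 x)$ is smaller than any fixed power of $1/\log_2 x$, so even that bound gives $\PR(\dc{q}\ \text{bad}\mid \dc{q}\in S)\ll \log_2^{6}x\,(\log x)^{-1/99}=o(\log_2^{-2}x)$, which is ample --- consistent with your own earlier (correct) remark that the bias $M(\log x)^{-1/99}$ is far below the tolerance $M/\log_2^{3}x$.
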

The result above yields the following immediate corollary:
\begin{corollary}\label{immediate}
With probability $1-o(1)$ in $\vec{\mathbf{a}}$, for all but at most $\frac{x}{\log x \log_2 x}$ elements $\dc{q} \in \QQ \cap S(\vec{\mathbf{a}})$, one has
\begin{equation}
\sum_{\dc{p} \in \PP} \PR( \dc{q} \in \mathbf{e}_{\dc{p}}(\vec{\dc{a}})  | \vec{\mathbf{a}}=\vec{\dc{a}}) = \frac{u}{\sigma} \frac{x}{2y} + O_{\leq} \pfrac{1}{(\log_2 x)^2}.
\end{equation}
\end{corollary}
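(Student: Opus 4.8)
The plan is to unwind the definitions of Section \ref{section-probability-weights} and reduce the statement directly to Lemma \ref{smc-2}, applied with $r=k$. Fix a realization $\vec{\dc{a}}$ of $\vec{\mathbf{a}}$ in the probability-$(1-o(1))$ event provided by Lemma \ref{smc-2}, so that \eqref{sumno} holds for all but at most $\frac{x}{2\log x\,\log_2 x}$ of the primes $\dc{q}\in\QQ\cap S(\vec{\dc{a}})$. For such a $\dc{q}$ and for $\dc{p}\in\PP$, I would first rewrite $\PR(\dc{q}\in\mathbf{e}_{\dc{p}}(\vec{\dc{a}})\mid\vec{\mathbf{a}}=\vec{\dc{a}})$ using \eqref{epdef}: since $\dc{q}\in\QQ\cap S(\vec{\dc{a}})$ already, the event $\dc{q}\in\mathbf{e}_{\dc{p}}(\vec{\dc{a}})$ is precisely the event that $\mathbf{n}_{\dc{p}}+\dc{h}_i\dc{p}=\dc{q}$ for some $i\in\{1,\dots,k\}$, and since the $\dc{h}_i$ are distinct and $\dc{p}\neq 0$, the $k$ numbers $\dc{q}-\dc{h}_i\dc{p}$ are pairwise distinct, so
\[
\PR\bigl(\dc{q}\in\mathbf{e}_{\dc{p}}(\vec{\dc{a}})\mid\vec{\mathbf{a}}=\vec{\dc{a}}\bigr)=\sum_{i=1}^{k}\PR\bigl(\mathbf{n}_{\dc{p}}=\dc{q}-\dc{h}_i\dc{p}\mid\vec{\mathbf{a}}=\vec{\dc{a}}\bigr).
\]

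Next I would note that the primes $\dc{p}\notin\PP(\vec{\dc{a}})$ contribute nothing: there $\mathbf{n}_{\dc{p}}=0$, while $\dc{q}\neq\dc{h}_i\dc{p}$ for every $i$ (if $\dc{h}_i=0$ this is immediate; if $\dc{h}_i$ is a unit then $N_K(\dc{h}_i\dc{p})=N_K(\dc{p})\le x<N_K(\dc{q})$; and if $\dc{h}_i$ is a nonzero non-unit then $(\dc{h}_i\dc{p})=(\dc{h}_i)(\dc{p})$ is a product of two proper nonzero ideals, hence not prime, whereas $(\dc{q})$ is prime). Thus $\sum_{\dc{p}\in\PP}$ may be replaced by $\sum_{\dc{p}\in\PP(\vec{\dc{a}})}$. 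For $\dc{p}\in\PP(\vec{\dc{a}})$, formula \eqref{cond} gives $\PR(\mathbf{n}_{\dc{p}}=\dc{q}-\dc{h}_i\dc{p}\mid\vec{\mathbf{a}}=\vec{\dc{a}})=Z_{\dc{p}}(\vec{\dc{a}};\dc{q}-\dc{h}_i\dc{p})/X_{\dc{p}}(\vec{\dc{a}})$, and by the definition \eqref{padef} of $\PP(\vec{\dc{a}})$ we have $X_{\dc{p}}(\vec{\dc{a}})=\sigma^{k}\bigl(1+O(1/\log^{3}x)\bigr)$ uniformly for $\dc{p}\in\PP(\vec{\dc{a}})$. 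Combining these,
\[
\sum_{\dc{p}\in\PP}\PR\bigl(\dc{q}\in\mathbf{e}_{\dc{p}}(\vec{\dc{a}})\mid\vec{\mathbf{a}}=\vec{\dc{a}}\bigr)=\bigl(1+O(1/\log^{3}x)\bigr)\,\sigma^{-k}\sum_{i=1}^{k}\sum_{\dc{p}\in\PP(\vec{\dc{a}})}Z_{\dc{p}}(\vec{\dc{a}};\dc{q}-\dc{h}_i\dc{p}).
\]

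Now I would invoke Lemma \ref{smc-2}: for all but at most $\frac{x}{2\log x\,\log_2 x}$ of the primes $\dc{q}\in\QQ\cap S(\vec{\dc{a}})$, the inner double sum equals $\sigma^{k}\bigl(1+O(1/\log_2^{3}x)\bigr)\frac{u}{\sigma}\frac{x}{2y}$, so the left-hand side above is $\bigl(1+O(1/\log_2^{3}x)\bigr)\frac{u}{\sigma}\frac{x}{2y}$ (the $O(1/\log^{3}x)$ factor being absorbed). To convert this multiplicative error into the claimed additive $O_{\le}\!\left(\frac{1}{(\log_2 x)^2}\right)$, it remains to check that $\frac{u}{\sigma}\frac{x}{2y}=O(1)$: by \eqref{eq8.10} together with $k=\log^{1/9}x$ we have $u\asymp\log k\asymp\log_2 x$; by the estimate for $\sigma$ in Section \ref{section-two-stage-selection}, $\sigma\asymp(\log_2 x)^{2}/(\log x\,\log_3 x)$; and by \eqref{ydef}, $y\asymp x\log x\,\log_3 x/\log_2 x$; multiplying these gives $\frac{u}{\sigma}\frac{x}{2y}\asymp 1$, so for $x$ large $\bigl(1+O(1/\log_2^{3}x)\bigr)\frac{u}{\sigma}\frac{x}{2y}=\frac{u}{\sigma}\frac{x}{2y}+O_{\le}\!\left(\frac{1}{(\log_2 x)^2}\right)$.

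The exceptional set of $\dc{q}$ is exactly the one supplied by Lemma \ref{smc-2}, of size $\le\frac{x}{2\log x\,\log_2 x}\le\frac{x}{\log x\,\log_2 x}$, and the probability $1-o(1)$ in $\vec{\mathbf{a}}$ is inherited verbatim, which completes the deduction. I do not expect any genuine obstacle: all of the analytic content is carried by Lemma \ref{smc-2}, and the only points needing a little care are the vanishing of the terms with $\dc{p}\notin\PP(\vec{\dc{a}})$ and the order-of-magnitude check $\frac{u}{\sigma}\frac{x}{2y}=O(1)$.
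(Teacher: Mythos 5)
Your proposal is correct and follows essentially the same route as the paper's proof: both reduce to Lemma \ref{smc-2}, rewrite $\PR(\dc{q}\in\mathbf{e}_{\dc{p}}(\vec{\dc{a}})\mid\vec{\mathbf{a}}=\vec{\dc{a}})$ as $\sum_i Z_{\dc{p}}(\vec{\dc{a}};\dc{q}-\dc{h}_i\dc{p})/X_{\dc{p}}(\vec{\dc{a}})$, note that the $\dc{p}\notin\PP(\vec{\dc{a}})$ terms vanish, and use $X_{\dc{p}}(\vec{\dc{a}})=\sigma^k(1+O(1/\log^3 x))$ on $\PP(\vec{\dc{a}})$. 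You supply two verifications the paper leaves implicit — that $\dc{q}=\dc{h}_i\dc{p}$ is impossible when $\mathbf{n}_{\dc{p}}=0$, and that $\frac{u}{\sigma}\frac{x}{2y}\asymp 1$ so the multiplicative error can be absorbed into the additive $O_\le(1/(\log_2 x)^2)$ — both of which are correct and welcome.
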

\begin{proof}
From \eqref{sumno}, and observing that $\dc{q}=\mathbf{n}_{\dc{p}}+\dc{h}_i \dc{p}$ is only 
possible if $\dc{p}\in \PP(\vec{\mathbf{a}})$, we find that
\begin{align*}
\sigma^{-r} \sum_{i=1}^r \sum_{\dc{p}\in \PP(\vec{\dc{a}})} 
Z_{\dc{p}}(\vec{\dc{a}};\dc{q}-\dc{h}_ip) &= \sigma^{-r}  \sum_{i=1}^r \sum_{p\in
  \PP(\vec{\dc{a}})} X_{\dc{p}}(\vec{\dc{a}}) \PR(\mathbf{n}_{\dc{p}}=\dc{q}-\dc{h}_ip |
\vec{\ba}=\vec{\dc{a}})\\
&=\left(1+O\pfrac{1}{\log^3 x}\right)  \sum_{i=1}^r \sum_{\dc{p}\in \PP(\vec{\dc{a}})}
 \PR(\mathbf{n}_{\dc{p}}=\dc{q}-\dc{h}_ip | \vec{\ba}=\vec{\dc{a}})\\
&= \left(1+O\pfrac{1}{\log^3 x}\right) \sum_{\dc{p}\in\PP} \PR(\dc{q}\in
\mathbf{e}_{\dc{p}}(\vec{\dc{a}}) |  \vec{\ba}=\vec{\dc{a}}).
\end{align*}
\end{proof}
\begin{proof}[Proof of Lemma \ref{smc-2} ]
By precisely the same argument as in the proof of \cite{fgkmt}{Lemma 6.2}, we have that
\begin{equation}\label{soo-2}
\E \sum_{\dc{n}} \sigma^{-r} \sum_{\dc{p}\in \PP\backslash \PP(\vec{\ba})} Z_{\dc{p}}(\vec{\ba};\dc{n})
=o\left(
\frac{u}{\sigma} \frac{x}{2y}\ \frac{1}{r} \frac{1}{\log_2^3 x}\ \frac{x}{\log x\log_2 x} \right),
\end{equation}
and consequently, it suffices to show that with probability $1-o(1)$,
for all but at most $\frac{x}{4\log x\log_2 x}$ primes $q \in \QQ \cap S(\vec{\mathbf{a}})$, one has
\begin{equation}\label{soo}
\sum_{i=1}^r \sum_{\dc{p}\in \PP} Z_{\dc{p}}(\vec{\ba};\dc{q}-\dc{h}_ip)
 = \left(1 + O_{\leq}\left(\frac{1}{\log^{3}_2 x}\right)\right) 
\sigma^{r-1} u \frac{x}{2y}.
\end{equation}

Observe that by Theorem \ref{thm8.6}, \eqref{eq8.11} and \eqref{eq8.12}, we have that
\begin{align*}
\sum_{\dc{p} \in \mathcal{P}} \mathbb{P}\left(\dc{q}=\tilde{\mathbf{n}}_{\dc{p}}+\dc{h}_i \dc{p}\right)&=\sum_{\dc{p} \in \mathcal{P}}\frac{ w^*\left(\dc{p}, \dc{q}-\dc{h}_i \dc{p}\right)}{\sum_m w^*(\dc{p}, \dc{m})} \\
&= \frac{u}{k} \frac{x}{2 y} \left(1 + O\left(\frac{1}{\log^{1/99}(x)} \right) \right) \quad(\dc{q} \in \mathcal{Q}, 1 \leq i \leq k) .
\end{align*}
Define
\begin{equation}\label{fqa}
F(\dc{q} ; \overrightarrow{\mathbf{a}}):=\sigma^{-k} \sum_{i=1}^k \sum_{\dc{p} \in \mathcal{P}} Z_{\dc{p}}\left(\overrightarrow{\mathbf{a}} ; \dc{q}-\dc{h}_i \dc{p}\right)
\end{equation}
Combining the above with Lemma \ref{lemma8.4} and \eqref{tildedensity}, we find that 
\begin{equation*}
\begin{aligned}
\mathbb{E} \sum_{\dc{q} \in \mathcal{Q} \cap S(\overrightarrow{\mathbf{a}})} F(\dc{q} ; \overrightarrow{\mathbf{a}}) & =\sigma^{-k} \sum_{\dc{q} \in \mathcal{Q}} \sum_{i=1}^k \sum_{\dc{p} \in \mathcal{P}} \mathbb{P}\left(\dc{q}+\left(\dc{h}_j-\dc{h}_i\right) p \in S(\overrightarrow{\mathbf{a}}) \forall j\right) \mathbb{P}\left(\tilde{\mathbf{n}}_{\dc{p}}=\dc{q}-\dc{h}_i \dc{p}\right) \\
& =\left(1+O\left(\frac{1}{\log ^{16} x}\right)\right) \sum_{\dc{q} \in \mathcal{Q}} \sum_{i=1}^k \sum_{\dc{p} \in \mathcal{P}} \mathbb{P}\left(\tilde{\mathbf{n}}_{\dc{p}}=\dc{q}-\dc{h}_i \dc{p}\right) \\
&= \left(1 + O\left(\frac{1}{\log^{1/99}(x)} \right)\right) \sum_{\dc{q} \in \mathcal{Q}} \sum_{i=1}^k \frac{u x}{2 k y} \\
&= \left(1 + O\left(\frac{1}{\log^{1/99}(x)} \right)\right)\frac{\sigma y}{\log x}\left(\frac{u x}{2 \sigma y}\right) .
\end{aligned}
\end{equation*}
Similarly, we find that
\begin{small}
\begin{equation*}
\begin{gathered}
\mathbb{E} \sum_{\dc{q} \in \mathcal{Q} \cap S(\overrightarrow{\mathbf{a}})} F(\dc{q} ; \overrightarrow{\mathbf{a}})^2=\sigma^{-2 k} \sum_{\dc{q} \in \mathcal{Q}} \sum_{\dc{p}_1, \dc{p}_2 \in \mathcal{P}} \sum_{i_1, i_2} \mathbb{P}\left(\dc{q}+\left(\dc{h}_j-\dc{h}_{i_{\ell}}\right) p_{\ell} \in S(\overrightarrow{\mathbf{a}}) \text { for } j=1, \ldots, k ;~\ell=1,2\right) \\
\times \mathbb{P}\left(\tilde{\mathbf{n}}_{\dc{p}_1}=\dc{q}-\dc{h}_{i_1} \dc{p}_1\right) \mathbb{P}\left(\tilde{\mathbf{n}}_{\dc{p}_2}=\dc{q}-\dc{h}_{i_2} \dc{p}_2\right).
\end{gathered}
\end{equation*}
\end{small}
Since we have $\PR(\tilde{\mathbf{n}}=n) \ll x^{-0.99}$ the ``diagonal" terms with $p_1 = p_2$ contribute 
\begin{equation*}
\ll \sigma^{-2 k}|\mathcal{Q}| \cdot|\mathcal{P}| k^2\left(x^{-0.99}\right)^2 \ll x^{0.03} .
\end{equation*}
For $\dc{p}_1 \neq \dc{p}_2$ and $\dc{q} \in \mc{Q}$ there are $2k-1$ distinct algebraic integers $\dc{q}+\left(\dc{h}_j-\dc{h}_{i_{\ell}}\right) \dc{p}_{\ell}, 1 \leq j \leq k, 1 \leq \ell \leq 2$, since only the terms $j=i_1$, $\ell=1$, and $j=i_2$, $\ell=2$ are equal. Consequently, by Lemma \ref{lemma8.4}, 
\[
\mathbb{E} \sum_{\dc{q} \in \mathcal{Q} \cap S(\overrightarrow{\mathbf{a}})} F(\dc{q} ; \overrightarrow{\mathbf{a}})^2 = \frac{\sigma y}{\log x}\left(\frac{u x}{2 \sigma y}\right)^2\left(1 + O\left(\frac{1}{\log^{1/99}(x)}\right)\right).
\]
Combining the first and second moment calculations, we find that
\[\begin{aligned} 
&\mathbb{E} \sum_{\dc{q} \in \mathcal{Q} \cap S(\overrightarrow{\mathbf{a}})}\left(F(\dc{q} ; \overrightarrow{\mathbf{a}})-\frac{x u}{2 \sigma y}\right)^2 \\
& =\mathbb{E} \sum_{\dc{q} \in \mathcal{Q} \cap S(\overrightarrow{\mathbf{a}})} F(\dc{q} ; \overrightarrow{\mathbf{a}})^2-2 \frac{x u}{2 \sigma y} \mathbb{E} \sum_{\dc{q} \in \mathcal{Q} \cap S(\overrightarrow{\mathbf{a}})} F(\dc{q} ; \overrightarrow{\mathbf{a}})+\left(\frac{x u}{2 \sigma y}\right)^2 \mathbb{E}|\mathcal{Q} \cap S(\overrightarrow{\mathbf{a}})| \\ & =O\left(\frac{\sigma y}{\log x}\left(\frac{x u}{2 \sigma y}\right)^2 \left(\frac{1}{\log_2^{20}(x)}\right)\right) .
\end{aligned}\]
By Markov's inequality, it follows that the LHS is $\leq \frac{\sigma y}{\log x}\left(\frac{x u}{2 \sigma y}\right)^2 \left(\frac{1}{\log_2^{10}(x)}\right) $ with probability $1 - O\left(\frac{1}{\log_2^{9}(x)}\right)$. In this event, $F(\dc{q} ; \overrightarrow{\mathbf{a}}) = \frac{x u}{2 \sigma y} \left(1 + O_\leq\left(\frac{1}{\log_2^{3} x}\right) \right)$ for all but $O\left(\frac{\sigma y}{\log x} \cdot \frac{1}{\log_2^{3}(x)} \right)$ primes $\dc{q} \in \mathcal{Q} \cap S(\overrightarrow{\mathbf{a}})$. Since $\sigma y / \log x = 100 c \frac{x}{\log x}\log_2(x)	 \left(1 + O\left(\frac{1}{\log_2^{20}(x)} \right)\right)$, the lemma follows.
\end{proof}
\section{Applying the covering theorem}\label{section-applying-covering}
We require the following result, which is a consequence of the hypergraph covering theorem proven in \cite{fgkmt}:
\begin{corollary}[Corollary 4, \cite{fgkmt}]\label{packing-quant-cor}  Let $x\to\infty$.
Let $\PP'$, $\QQ'$ be sets 
with $\# \PP' \le x$ and $\#\QQ' > (\log_2 x)^3$.
For each $\dc{p} \in \PP'$, let $\vec{\mathbf{e}}_{\dc{p}}$ be a random subset of $\QQ'$
satisfying the size bound
\be\label{rbound}
\# \vec{\mathbf{e}}_{\dc{p}} \le r = O\left( \frac{\log x \log_3 x}{\log_2^2 x} \right) \qquad
(\dc{p}\in \PP').
\ee
Assume the following:
\begin{itemize}
\item (Sparsity) For all $\dc{p} \in \PP'$ and $\dc{q} \in \QQ'$,
\begin{equation}\label{qform-quant-cor}
\PR( \dc{q} \in \vec{\mathbf{e}}_{\dc{p}} ) \le x^{-1/2 - 1/10}.
\end{equation}
\item (Uniform covering) For all but at most $\frac{1}{(\log_2 x)^2} \# \QQ'$ elements $\dc{q} \in \QQ'$, we have
\begin{equation}\label{pje-size-bite-cor}
\sum_{\dc{p} \in \PP'} \PR( \dc{q} \in \vec{\mathbf{e}}_{\dc{p}}) = C + O_{\le}\pfrac{1}{(\log_2 x)^2}
 \end{equation}
for some quantity $C$, independent of $\dc{q}$, satisfying
\begin{equation}\label{sigma}
\frac{5}{4} \log 5 \le C \ll 1.
\end{equation}
\item (Small codegrees) For any distinct $\dc{q}_1,\dc{q}_2 \in \QQ'$,
\be\label{small-codegree-cor}
\sum_{\dc{p}\in\PP'} \PR(\dc{q}_1,\dc{q}_2\in \vec{\mathbf{e}}_{\dc{p}}) \le x^{-1/20}.
\ee
\end{itemize}
Then for any positive integer $m$ with
\begin{equation}\label{moo}
 m \le \frac{\log_3 x}{\log 5},
\end{equation}
we can find random sets $\vec{\mathbf{e}}'_{\dc{p}}\subseteq \QQ'$ for each $\dc{p} \in \PP'$ such that
\[
\# \{ \dc{q} \in \QQ':  \dc{q} \not\in \vec{\mathbf{e}}'_{\dc{p}} \hbox{ for all } \dc{p} \in \PP' \} \sim 5^{-m} \# \QQ'
\]
with probability $1-o(1)$.  More generally, for any $\QQ'' \subset \QQ'$ with cardinality at least $(\# \QQ')/\sqrt{\log_2 x}$, one has
\[
\# \{ \dc{q} \in \QQ'':  \dc{q} \not\in \vec{\mathbf{e}}'_{\dc{p}} \hbox{ for all } \dc{p} \in \PP' \} \sim 5^{-m} \# \QQ''
\]
with probability $1-o(1)$.  The decay rates in the $o(1)$ and $\sim$ notation
are uniform in $\PP'$, $\QQ'$, $\QQ''$.
\end{corollary}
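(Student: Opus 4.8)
The plan is to recognize this statement as \cite{fgkmt}*{Corollary 4}, with the sets $\PP'$, $\QQ'$, $\QQ''$ now taken inside $\mc{O}_K$ in place of $\Z$, and to quote the deduction given there. Both the hypotheses \eqref{rbound}--\eqref{small-codegree-cor} and the conclusion, as well as the entire passage from the hypergraph covering theorem \cite{fgkmt}*{Theorem 3} to \cite{fgkmt}*{Corollary 4}, are phrased purely in terms of the abstract finite sets $\PP'$, $\QQ'$, $\QQ''$ and the random subsets $\vec{\mathbf{e}}_{\dc{p}} \subseteq \QQ'$: no arithmetic feature of the ambient ring is ever used. So the argument transfers verbatim, and what remains is to recall its shape.

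First I would isolate the exceptional set $\mathcal{E} \subseteq \QQ'$ of vertices $\dc{q}$ for which \eqref{pje-size-bite-cor} fails; this has $\#\mathcal{E} \le \#\QQ'/(\log_2 x)^2$, and since $m \le \log_3 x / \log 5$ forces $5^{-m} \ge 1/\log_2 x$, it (together with $\QQ'' \cap \mathcal{E}$) is negligible against the target uncovered sets $\sim 5^{-m}\#\QQ'$ and $\sim 5^{-m}\#\QQ''$ and may simply be declared uncovered at the outset. On the remaining ground set the covering identity holds at \emph{every} vertex with a common value $C$ satisfying $\frac{5}{4}\log 5 \le C \ll 1$; discarding random edges uniformly, one brings the expected degree $\sum_{\dc{p}\in\PP'}\PR(\dc{q}\in\vec{\mathbf{e}}_{\dc{p}})$ down to exactly $\log 5$, the surplus $C-\log 5 \ge \frac{1}{4}\log 5$ providing the room needed to do this uniformly and to absorb the $O_{\le}((\log_2 x)^{-2})$ fluctuations. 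One then applies \cite{fgkmt}*{Theorem 3} once for each unit of $m$: at stage $j+1$ it is applied on the vertex set $\QQ_j$ left uncovered after stage $j$, producing sub-edges $\vec{\mathbf{e}}^{(j+1)}_{\dc{p}}$ that cover all but a $\sim 1/5$ fraction of $\QQ_j$, and --- invoking the final (``more generally'') clause of that theorem with $\QQ'' \cap \QQ_j$ --- simultaneously all but a $\sim 1/5$ fraction of the running trace of $\QQ''$. The size, sparsity and small-codegree hypotheses \eqref{rbound}, \eqref{qform-quant-cor}, \eqref{small-codegree-cor} are inherited at every stage because they are uniform over $\dc{p}, \dc{q}$; the uniform-covering hypothesis is inherited because $\mathcal{E}$ was removed up front; and the relative-size requirement $1/\sqrt{\log_2 x}$ on the sub-ground-set is met at each stage since $\#(\QQ'' \cap \QQ_j) \sim 5^{-j}\#\QQ''$ while $\#\QQ_j \sim 5^{-j}\#\QQ'$. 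Setting $\vec{\mathbf{e}}'_{\dc{p}} = \bigcup_{j=1}^m \vec{\mathbf{e}}^{(j)}_{\dc{p}}$ then leaves exactly $\QQ_m$ uncovered, of size $\sim 5^{-m}\#\QQ'$, and likewise $\sim 5^{-m}\#\QQ''$ within $\QQ''$.

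The step I expect to be the main obstacle --- the only part that is not purely formal --- is the bookkeeping across the $O(\log_3 x)$ iterations: one must verify that the $O_{\le}((\log_2 x)^{-2})$-type exceptional fractions and the $1-o(1)$ failure probabilities produced by \cite{fgkmt}*{Theorem 3} stay under control after $\asymp \log_3 x$ successive applications. For the probabilities this is a union bound over $O(\log_3 x)$ events, each failing with probability $o(1)$ at a rate uniform in the input data, together with the fact that $\log_3 x$ grows more slowly than the reciprocal of that rate; for the exceptional fractions one propagates them through the successive restrictions as in \cite{fgkmt}, using repeatedly that $5^{-m} \ge 1/\log_2 x$ to keep each accumulated loss negligible against the shrinking ground set. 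The uniformity of the resulting $o(1)$ and $\sim$ in $\PP'$, $\QQ'$, $\QQ''$ is then inherited directly from the corresponding uniformity in \cite{fgkmt}*{Theorem 3}.
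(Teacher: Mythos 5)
Your proposal matches the paper exactly: the paper states this result as a direct quotation of \cite{fgkmt}*{Corollary 4} and supplies no independent proof, relying precisely on your observation that the hypotheses and conclusion are phrased purely in terms of abstract finite sets $\PP'$, $\QQ'$, $\QQ''$ and random subsets $\vec{\mathbf{e}}_{\dc{p}} \subseteq \QQ'$, so nothing changes when the ambient elements live in $\mc{O}_K$ rather than $\Z$. Your further sketch of the deduction of \cite{fgkmt}*{Corollary 4} from \cite{fgkmt}*{Theorem 3} by iterating the covering theorem $m$ times is consistent with the argument in that reference, but the present paper does not reproduce it.
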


In order to prove Theorem \ref{sieved}, we first show the following:
\begin{theorem}[Random construction]\label{sieve-primes-2}  Let $x$ be a sufficiently large real number and define $y$ by \eqref{ydef}.
 Then there is a quantity $C$ with
\begin{equation}\label{sigma-order}
C \asymp \frac{1}{c}
\end{equation}
with the implied constants independent of $c$, a tuple of positive integers
$(\dc{h}_1,\ldots,\dc{h}_k)$ with $k\le \sqrt{\log x}$,
and some way to choose random vectors $\vec{\mathbf{a}}=(\mathbf{a}_{\mf{s}} \pmod{\mf{s}})_{\mf{s} \in \cS}$
and $\vec{\mathbf{n}}=(\mathbf{n}_{\dc{p}})_{\dc{p} \in \PP}$ of congruence classes $\mathbf{a}_s \pmod{s}$ and algebraic integers $\mathbf{n}_{\dc{p}} \in \mc{O}_K$ respectively, obeying the following:
\begin{itemize}
\item For every $\vec{\dc{a}}$ in the essential range of $\vec{\mathbf{a}}$, one has
$$ \PR( \dc{q} \in \mathbf{e}_{\dc{p}}(\vec{\dc{a}})  | \vec{\mathbf{a}} = \vec{\dc{a}}) 
\leq x^{-1/2 - 1/10} \quad (\dc{p}\in \PP),
$$
where $\mathbf{e}_{\dc{p}}(\vec{\dc{a}}):=\{ \mathbf{n}_{\dc{p}}+\dc{h}_i \dc{p} : 1\le i\le r\} \cap \QQ \cap S(\vec{\dc{a}})$.
\item With probability $1-o(1)$ we have that	
\begin{equation}\label{treat}
 \# (\QQ \cap S(\vec{\mathbf{a}})) \sim 100 c \frac{x}{\log x} \log_2 x.
\end{equation}
\item Call an element $\vec{\dc{a}}$ in the essential range of $\vec{\mathbf{a}}$
\emph{good} if, for all but at most 
$\frac{x}{\log x \log_2 x}$ elements $q \in \QQ \cap S(\vec{\dc{a}})$, one has
\begin{equation}\label{good}
\sum_{\dc{p} \in \PP} \PR( \dc{q} \in \mathbf{e}_{\dc{p}}(\vec{\dc{a}})  | \vec{\mathbf{a}}=\vec{\dc{a}}) = C + O_{\le} \pfrac{1}{(\log_2 x)^2}.
\end{equation}
Then  $\vec{\mathbf{a}}$ is good with probability $1-o(1)$.
\end{itemize}
\end{theorem}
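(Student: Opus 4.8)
The plan is to assemble the claimed random construction out of the machinery already established: the sieve weight $w^*$ of Theorem~\ref{thm8.6}, the random residue classes $\vec{\mathbf{a}}$ of Section~\ref{section-two-stage-selection}, and the random algebraic integers $\vec{\mathbf{n}}=(\mathbf{n}_{\dc{p}})_{\dc{p}\in\PP}$ of Section~\ref{section-probability-weights}. Concretely I would take $k=\lceil\log^{1/9}x\rceil$ (so $k\le\sqrt{\log x}$) and fix an admissible $k$-tuple $(\dc{h}_1,\dots,\dc{h}_k)$ of distinct elements of $\mc{O}_K$ with $N_K(\dc{h}_i)\le 2k^2$; such a tuple exists by a routine sieve/counting argument, since by Lemma~\ref{elements} there are $\asymp_K k^2$ elements $\dc{h}\in\mc{O}_K$ with $N_K(\dc{h})\le 2k^2$, and a Brun-type sieve leaves $\gg k^2/\log k\ge k$ of them avoiding the class $0\pmod{\mf{p}}$ for every prime $\mf{p}$ with $N_K(\mf{p})\le k$ (the condition being automatic when $N_K(\mf{p})>k$). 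Feeding this tuple and the parameters $s,R,D,z,\tilde N$ of Theorem~\ref{thm8.6} into that theorem produces $w^*$, $\tau=x^{o(1)}$, and $u\asymp\log k$. We then let $\vec{\mathbf{a}}$ be distributed as in Section~\ref{section-two-stage-selection}, and conditionally on $\vec{\mathbf{a}}=\vec{\dc{a}}$ let $\mathbf{n}_{\dc{p}}$ have the law \eqref{cond} when $\dc{p}\in\PP(\vec{\dc{a}})$ and $\mathbf{n}_{\dc{p}}=0$ otherwise, so that $\mathbf{e}_{\dc{p}}(\vec{\dc{a}})$ is given by \eqref{epdef}. The quantity in the statement will be $C:=\dfrac{u}{\sigma}\,\dfrac{x}{2y}$.

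The sparsity bound I would prove pointwise in $\vec{\dc{a}}$. If $\dc{p}\in\PP(\vec{\dc{a}})$, then $\dc{q}\in\mathbf{e}_{\dc{p}}(\vec{\dc{a}})$ forces $\mathbf{n}_{\dc{p}}=\dc{q}-\dc{h}_i\dc{p}$ for some $i\le k$, so \eqref{cond} gives
\[
\PR\bigl(\dc{q}\in\mathbf{e}_{\dc{p}}(\vec{\dc{a}})\mid\vec{\mathbf{a}}=\vec{\dc{a}}\bigr)\le\frac{k}{X_{\dc{p}}(\vec{\dc{a}})}\,\max_{\dc{n}\in\mc{O}_K}\PR(\tilde{\mathbf{n}}_{\dc{p}}=\dc{n}).
\]
By \eqref{tildedensity}, \eqref{eq8.11} and \eqref{eq8.13} one has $\PR(\tilde{\mathbf{n}}_{\dc{p}}=\dc{n})\ll x^{o(1)}\dfrac{\log^k x}{\tau y}=x^{-1+o(1)}$, using $\tau=x^{o(1)}$, $\log^k x=\exp(k\log_2 x)=x^{o(1)}$ (as $k=\log^{1/9}x$), and $y\gg x$; moreover $X_{\dc{p}}(\vec{\dc{a}})\ge(1-\log^{-3}x)\sigma^k\gg\sigma^k$ by \eqref{padef}, and $\sigma^{-k}=\exp(k|\log\sigma|)=x^{o(1)}$ since $\log\sigma\asymp-\log_2 x$. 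Hence $\PR(\dc{q}\in\mathbf{e}_{\dc{p}}(\vec{\dc{a}})\mid\vec{\mathbf{a}}=\vec{\dc{a}})\ll x^{-1+o(1)}\le x^{-1/2-1/10}$ for large $x$. If $\dc{p}\notin\PP(\vec{\dc{a}})$, then $\mathbf{n}_{\dc{p}}=0$, so $\mathbf{e}_{\dc{p}}(\vec{\dc{a}})\subseteq\{\dc{h}_i\dc{p}:1\le i\le k\}\cap\QQ$; but each $\dc{h}_i\dc{p}$ is composite unless $\dc{h}_i$ is a unit, in which case it is an associate of $\dc{p}$ and has norm $\le x$, so it never lies in $\QQ$ and the probability is $0$. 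Thus the sparsity bound holds for every $\dc{p}\in\PP$ and every $\vec{\dc{a}}$.

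The other two bullets are read off from earlier results. Claim \eqref{treat} is exactly Corollary~\ref{corollary8.5}. For the last bullet, with the above $C$ the estimate \eqref{good} is precisely the conclusion of Corollary~\ref{immediate}, so $\vec{\mathbf{a}}$ is good with probability $1-o(1)$; it remains only to check \eqref{sigma-order}. Substituting $u\asymp\log k\asymp\log_2 x$ (Theorem~\ref{thm8.6}, since $k=\log^{1/9}x$), $\sigma=\dfrac{100(\log_2 x)^2}{\log x\log_3 x}(1+o(1))$ (the first display following \eqref{sigmadef}), and $y=cx\dfrac{\log x}{\log_2 x}\log_3 x\,(1+o(1))$ (from \eqref{ydef}) into $C=\dfrac{u}{2\sigma}\cdot\dfrac{x}{y}$ yields $C=\dfrac{u_0}{200c}(1+o(1))$ with $u_0:=u/\log_2 x\asymp 1$; since the dependence on $c$ enters only through $y$ and is linear, $C\asymp 1/c$ with implied constants depending only on the implied constant in $u\asymp\log k$ and hence independent of $c$.

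I expect no serious obstacle: the theorem is essentially a repackaging of Theorem~\ref{thm8.6}, Corollary~\ref{corollary8.5}, and Corollary~\ref{immediate}, with no new analytic input. The only points requiring care are (i) the existence of the admissible tuple subject to $N_K(\dc{h}_i)\le 2k^2$ as $k\to\infty$, which is a standard sieve estimate, and (ii) tracking in the sparsity bound that $\tau$, $\log^k x$, and $\sigma^{-k}$ are all $x^{o(1)}$, so that dividing the per-element density $\ll x^{-1+o(1)}$ of $\tilde{\mathbf{n}}_{\dc{p}}$ by $X_{\dc{p}}(\vec{\dc{a}})\gg\sigma^k$ and summing over the $k$ shifted forms still leaves $x^{-1+o(1)}$. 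The real substance of the argument lies in the preceding sections.
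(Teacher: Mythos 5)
Your proposal is correct and follows essentially the same approach as the paper: you take $C = \frac{ux}{2\sigma y}$, derive the sparsity bound from $\PR(\tilde{\mathbf{n}}_{\dc{p}}=\dc{n}) \ll x^{-1+o(1)}$ together with $X_{\dc{p}}(\vec{\dc{a}}) \gg \sigma^k = x^{-o(1)}$, read off the second bullet from Corollary~\ref{corollary8.5} and the third from Corollary~\ref{immediate}. You actually supply more detail than the paper does in a few places — the existence of the admissible tuple, the explicit unpacking of $C\asymp 1/c$, and the verification that $\mathbf{e}_{\dc{p}}(\vec{\dc{a}})=\emptyset$ when $\dc{p}\notin\PP(\vec{\dc{a}})$ because each $\dc{h}_i\dc{p}$ is composite or of norm $\le x$ — but the substance is the same.
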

\begin{proof}
Let $C := \frac{ux}{2\sigma y}$; note that $C \asymp \frac{1}{c}$. First, observe that if $\dc{p} \in \mc{P} \setminus \mc{P}(\vec{\dc{a}})$, then $\PR(\dc{q} \in \mathbf{e}_{\dc{p}}(\vec{\dc{a}}) | \vec{\mathbf{a}} = \vec{\dc{a}}) = 0$. Otherwise, using the fact that $\PR(\tilde{\mathbf{n}} = \dc{n}) \ll x^{-0.99}$, we find that
	\begin{align*}
	\PR( \dc{q} \in \mathbf{e}_{\dc{p}}(\vec{\dc{a}})  | \vec{\mathbf{a}} = \vec{\dc{a}}) &= \sum_{i=1}^k  \PR(\mathbf{n}_{\dc{p}} = \dc{q} - \dc{h}_i \dc{p}  | \vec{\mathbf{a}} = \vec{\dc{a}}) = \sum_{i=1}^k \frac{Z_{\dc{p}}(\vec{\dc{a}},\dc{q}-\dc{h}_i \dc{p})}{X_{\dc{p}}(\vec{\dc{a}})} \\
	&\ll (\log x)^{1/2} \cdot x^{-0.99}  \cdot \sigma^{-2k} \ll x^{-0.99} \cdot \exp((\log x)^{0.51}) \leq x^{-1/2-1/10}. 
	\end{align*}
	This proves the first assertion of the theorem. The second assertion follows from Corollary \ref{corollary8.5}. Finally, the third assertion follows from Corollary \ref{immediate}.
\end{proof}

Finally, we show how Theorem \ref{sieved} follows from Theorem \ref{sieve-primes-2} and Corollary \ref{packing-quant-cor}:
\begin{proof}[Proof of Theorem \ref{sieved} using Theorem \ref{sieve-primes-2} and Theorem \ref{packing-quant-cor}]
	By \eqref{sigma-order}, if we choose $0 < c < 1/2$ sufficiently small, we can ensure that \eqref{sigma} holds. Take
	\[
 		m = \left\lfloor \frac{\log_3 x}{\log 5} \right\rfloor.
	\]
	Let $\vec{\mathbf{a}}$ and $\vec{\mathbf{n}}$ be the random vectors guaranteed by Theorem \ref{sieve-primes-2}. By Theorem \ref{sieve-primes-2}, there exists some $\vec{\dc{a}}$ such that $\vec{\dc{a}}$ is good and \eqref{treat} holds. 	We intend to apply Corollary \ref{packing-quant-cor} with $\mc{P}' = \mc{P}$ and $\QQ' = \QQ \cap S(\vec{\dc{a}}) $ to the random variables $\mathbf{n}_{\dc{p}}$ conditioned on $\vec{\mathbf{a}} = \vec{\dc{a}}$. 
	
	We now verify that each hypothesis of the Corollary \ref{packing-quant-cor} holds.
	First, note that \eqref{pje-size-bite-cor} follows from \eqref{good}. Similarly, \eqref{qform-quant-cor} follows from the first assertion of Theorem \ref{sieve-primes-2}. Finally, we must verify \eqref{small-codegree-cor}. For distinct $\dc{q}_1, \dc{q}_2 \in \QQ$, observe that if $\dc{q}_1, \dc{q}_2 \in \mathbf{e}_{\dc{p}}(\vec{\dc{a}})$, then $\dc{p} \mid \dc{q}_1 - \dc{q}_2$. However, $\dc{q}_1 - \dc{q}_2$ is a nonzero algebraic integer of norm $O(x \log x)$, and can therefore be divisible by at most one prime $\dc{p}_0 \in \PP'$. Hence,
		\[
			\sum_{\dc{p}\in\PP'} \PR(\dc{q}_1,\dc{q}_2\in\mathbf{e}_{\dc{p}}(\vec{\dc{a}})) \leq  \PR(\dc{q}_1,\dc{q}_2\in\vec{\mathbf{e}}_{\dc{p}_0}(\vec{\dc{a}})) \le x^{-1/2-1/10}.
		\]
		By Corollary \ref{packing-quant-cor} and \eqref{treat}, there exist random variables $\vec{\mathbf{e}}'_{\dc{p}}(\vec{\dc{a}})$ with essential range contained in the essential range of $\mathbf{e}_{\dc{p}}(\vec{\dc{a}}) \cup \{\emptyset\}$, satisfying
		\[
 			\{ \dc{q}\in \QQ \cap S(\vec{\dc{a}}) : \dc{q}\not\in \vec{\mathbf{e}}'_{\dc{p}}(\vec{\dc{a}}) \text{ for all }\dc{p}\in\PP\} 
			\sim 5^{-m} \# (\QQ\cap S(\vec{\dc{a}})) \ll \frac{x}{\log x}
		\]
		with probability $1-o(1)$. Since $\vec{\mathbf{e}}'_{\dc{p}}(\vec{\dc{a}})=\{\mathbf{n}'_{\dc{p}}+\dc{h}_i \dc{p} : 1\le i\le r\} \cap \QQ \cap S(\vec{\dc{a}})$ for some random algebraic integer $\mathbf{n}'_{\dc{p}}$, it follows that
		\[
 			\{ \dc{q}\in \QQ \cap S(\vec{\dc{a}}) : \dc{q}\not \equiv \mathbf{n}'_{\dc{p}}\pmod{p} \text{ for all }\dc{p}\in\PP\} \ll \frac{x}{\log x}
		\]
with probability $1-o(1)$.  Taking a specific $\vec{\mathbf{n}}'=\vec{\dc{n}}'$ for which the above holds and setting $\dc{b}_{\dc{p}}=\dc{n}'_{\dc{p}}$ for all $\dc{p}$ yields the conclusion of Theorem \ref{sieved}.
\end{proof}

\bibliographystyle{abbrv}
\bibliography{mybib}
\end{document}